\numberwithin{equation}{section}
\newtheorem{theorem}{Theorem}[section]
\newtheorem{lemma}[theorem]{Lemma}
\newtheorem{corollary}[theorem]{Corollary}
\theoremstyle{definition}
\newtheorem{definition}[theorem]{Definition}
\newtheorem{assumption}[theorem]{Assumption}
\theoremstyle{remark}
\newtheorem{remark}[theorem]{Remark}
\newtheorem{notation}[theorem]{Notation}
\newcommand{\R}{\mathbb{R}}
\newcommand{\Z}{\mathbb{Z}}
\newcommand{\N}{\mathbb{N}}
\newcommand{\E}{\mathbb{E}}
\newcommand{\A}{\mathcal{A}}
\newcommand{\B}{\mathcal{B}}
\renewcommand{\O}{\mathcal{O}}
\renewcommand{\tilde}{\widetilde}
\renewcommand{\epsilon}{\varepsilon}
\newcommand{\dist}{\operatorname{dist}}
\renewcommand{\hat}{\widehat}
\newcommand{\ess}{{\rm ess}}
\renewcommand{\l}{\lambda}
\newcommand{\e}{\varepsilon}
\renewcommand{\hom}{{\rm hom}}
\begin{document}

\title{Eigenfunctions localised on a defect
\\ 
in high-contrast random media%
\\[1ex]
\small SIAM Journal on Mathematical Analysis \textbf{55}:6 (2023), 
7449--7489\footnote{Authors' typesetting and spelling conventions may differ from the final journal version.}}
\author{Matteo Capoferri\thanks{
Maxwell Institute for Mathematical Sciences, Edinburgh and Department of Mathematics, Heriot-Watt University, Edinburgh EH14 4AS, UK;
\text{m.capoferri@hw.ac.uk},
\url{http://mcapoferri.com}.
}
\and
Mikhail Cherdantsev\thanks{
School of Mathematics,
Cardiff University,
Senghennydd Road,
Cardiff CF24~4AG,
UK;
\text{CherdantsevM@cardiff.ac.uk}.
}
\and
Igor Vel\v{c}i\'c\thanks{
Faculty of Electrical Engineering and Computing, 
University of Zagreb, Unska 3, 
10000 Zagreb, Croatia;
\text{igor.velcic@fer.hr}.
}}

%%%% this removes footnote "crosses" by the authors' names
%\renewcommand\footnotemark{}
%%%%

%\date{version 95 -- FINAL PUBLISHED VERSION; 29 November 2023; \LaTeX{ed} \today}
\date{\today}

\maketitle
\begin{abstract}
We study the properties of eigenvalues and corresponding eigenfunctions generated by a defect in the gaps of the spectrum of a high-contrast random operator. We consider a family of elliptic operators $\A^\epsilon$ in divergence form whose coefficients \textcolor{black}{are random,} possess double porosity type scaling, and are perturbed on a fixed-size compact domain \textcolor{black}{(a defect)}.  Working in the gaps of the limiting spectrum of the unperturbed operator $\hat\A^\epsilon$, we show that the point spectrum of $\A^\epsilon$ converges in the sense of Hausdorff to the point spectrum of the {\color{black} limiting two-scale operator} $\A^\mathrm{hom}$ as $\epsilon \to 0$. Furthermore, we prove that the eigenfunctions of $\A^\epsilon$ decay exponentially at infinity uniformly  for sufficiently small $\epsilon$. This, in turn, yields strong stochastic two-scale convergence of such eigenfunctions to  eigenfunctions of $\A^\mathrm{hom}$.

\

{\bf Keywords:} high contrast media, random media, stochastic homogenisation, defect modes, localised eigenfunctions.

\

{\bf 2020 MSC classes: }
primary 
74S25,  %Spectral and related methods applied to problems in solid mechanics
74A40;  	% Random materials and composite materials
secondary 
35B27,  %Homogenization in context of PDEs; PDEs in media with periodic structure 
74Q15,  %Effective constitutive equations in solid mechanics
35B40,  %Asymptotic behavior of solutions to PDEs
60H15,  %Stochastic partial differential equations
35P05. % General topics in linear spectral theory for PDEs
\end{abstract}

\tableofcontents

\allowdisplaybreaks

\section{Introduction}
\label{Introduction}

We consider a high-contrast two-phase random medium, comprising a moderately ``stiff'' material --- the \emph{matrix} --- in which small ``soft'' inclusions are randomly dispersed, with a fixed size defect filled by a third phase. The spectral properties of  high-contrast random media were recently studied  in \cite{CCV1}, \cite{CCV2}. It was shown that similarly to the periodic high-contrast setting \cite{zhikov2000}, \cite{zhikov2004}, under some physically natural assumptions the corresponding operators may exhibit gaps  in their spectrum.  It is well known that in this case a defect can induce localised modes in the gaps \cite{birman,AADH,FK}.   The goal of the paper is to study   localised modes generated by a defect.

The existence of localised modes, i.e.~finite energy wave-like solutions of equations of physics --- typically of electromagnetic or elastic nature --- such that almost all their energy remains in a bounded region at all times, is a classical and well-studied problem.
This phenomenon is particularly important in applications, in that it can confer advantageous and desirable physical properties on the material in question. It is responsible, for instance, for the behaviour of photonic and phononic fibres \cite{phononic,knight, russell}.

In mathematical terms, the study of localised modes typically reduces to the analysis of an elliptic spectral problem on the cross-section of the material.
The prototypical example is that of a composite medium described by an elliptic operator in divergence form with periodic coefficients perturbed on a relatively compact set. The spectral theory of operators with periodic coefficients, both in the presence and in the absence of a defect, has received considerable attention over the years. We refer the reader to \cite{kuchment} for a review of the subject, which focuses, in particular, on the mathematics of photonic band-gap optical materials.

In the current work we are interested in high-contrast media, namely, materials described by an elliptic operator in divergence form whose coefficients possess the so-called double porosity type scaling (see~\eqref{28 April 2020 equation 7} below). Defect modes for high-contrast \emph{periodic} media have been studied by Cherdantsev \cite{cherdantsev} and Kamotski--Smyshlyaev \cite{KS} in dimension $d\ge 2$.  Somewhat stronger results were later obtained in dimension one by Cherdantsev, Cherednichenko and Cooper \cite{CCC}, see also \cite{CCG}, although it is worth mentioning that one-dimensional setting is very special, and the nature of the band-gap spectrum there is different from the one in higher dimensions. 
\textcolor{black}{Even in the absence of high-contrast in the constituents, by carefully choosing the frequency of a Bloch wave type solution in relation to the wavenumber along the fibres one can \emph{artificially} obtain a high-contrast operator on the cross-section of the material, see \cite{CCS} for details.}

The mathematical properties of high-contrast \emph{random} media have remained, until very recently, largely unexplored. Some progress has been made in the last few years by Cherdantsev, Cherednichenko and Vel\v{c}i\'c, both in bounded domains \cite{CCV1} and in the whole space \cite{CCV2}. A summary of the results from \cite{CCV1,CCV2} will be provided later in this \textcolor{black}{and the next} section.

The goal of the current paper is to perform a detailed analysis of localised modes in high-contrast random media, 
thus extending the results of \cite{cherdantsev, KS} to the stochastic setting of \cite{CCV2}.

{\color{black} Let us discuss  our problem in more detail. 

We begin by recalling a classical result of Figotin and Klein \cite{FK}, which applies, in particular, to a general class of elliptic self-adjoint operators of the form $-\nabla\cdot A\nabla$. Figotin and Klein showed that if the operator in question has a gap in its spectrum, then one can create an eigenvalue within any specific subinterval of the gap, by an appropriate compact perturbation of the coefficients $A$; furthermore, the  corresponding eigenfunction decays exponentially with the rate of decay proportional, loosely speaking,  to the distance from the eigenvalue to the edges of the gap, with proportionality factor depending on the ellipticity constant.

The basic starting point for creating localised modes --- the existence of spectral gaps --- is, in general, a non-trivial problem. While for periodic operators the spectrum has a band-gap structure, the existence of gaps is not guaranteed, since all the bands may overlap. In particular, this question is open for general elliptic operators of the form $ - \nabla \cdot A\nabla$  with periodic coefficients. However, in specific physically relevant examples, the existence of gaps can be shown, see, e.g., \cite{FKu}, and \cite{kuchment2016} for a general overview of the topic. It is well known that in homogenisation setting in the case of uniformly elliptic coefficients $A$, both periodic and random, the limit homogenised operator has no gaps in the spectrum. The picture is drastically different for the high-contrast setting  with infinitely many gaps opening in homogenisation limit, which makes it particularly interesting for the study of localisation phenomena. One more observation we make here is that the type of the spectrum surrounding the gap is  irrelevant for the basic argument one employs to argue the existence of localised modes induced by a defect, at least in the high-contrast homogenisation setting. Thus, while the problem of characterising the spectrum of an operator $ - \nabla \cdot A\nabla$   with random coefficients $A$ is  wide open to date, the answer to this question has no implications for our analysis.
	
	In order to proceed we need to preliminarily introduce some notation; rigorous definitions will be given in the next section. We denote by $\hat \A^\e$ the unperturbed (i.e., without defect) high-contrast operator, where $\e$ is a small parameter characterising the size of the microstructure, and by $\A^\e$ its perturbation by a finite size defect; $\hat \A^\hom$ and $\A^\hom$ will denote the corresponding limiting two-scale operators. 
	
For {\it periodic} high-contrast operators Zhikov \cite{zhikov2000, zhikov2004}  showed that the spectrum of $\hat \A^\hom$, characterised by a certain nonlinear function of the spectral parameter $\beta(\l)$, has infinitely many gaps. Furthermore, one has convergence of spectra $\sigma(\hat \A^\e)\to \sigma(\hat \A^\hom)$  in the sense of Hausdorff, so that, for sufficiently small $\epsilon$  gaps open in the spectrum of $\hat \A^\e$.   Therefore, according to \cite{FK}, by introducing a compact defect one can  generate discrete spectrum in the gaps of $\hat\A^\e$. It can be shown  that one can induce localised eigenvalues in the gaps of $ \sigma(\hat \A^\hom)$ via an argument analogous to that of \cite{FK}. In particular, Kamotski and Smyshlyaev in  \cite{KS} provided an explicit example of a localised  eigenfunction of $\A^\hom$ induced by a defect under an additional assumption of isotropy of the homogenised medium.  Their main result, however, consists in showing that for a defect eigenvalue $\l_0$   in a gap of the essential spectrum of $\A^\hom$, there exists  a defect eigenvalue $\l_\e$ of $\A^\e$ converging to $\l_0$  with a rate  of order $\e^{1/2}$. Moreover,  the distance (in an appropriate sense) between the localised eigenfunction $u_0$ corresponding to $\l_0$ and the eigenspace of $\A^\e$ corresponding to its eigenvalues located in the vicinity of $\l_0$ is also of order $\e^{1/2}$. In order to prove the converse statement, i.e. that for a  sequence of defect eigenvalues $\l_\e$ of $\A^\e$ converging to a point $\l_0$ in a gap of the essential spectrum of $\A^\hom$ it holds that $\l_0$ is  an eigenvalue of  $\A^\hom$, one needs a  compactness statement for the corresponding sequence of eigenfunctions $u_\e$. This was obtained in \cite{cherdantsev}, where the author proved a uniform exponential decay of $u_\e$ independent of $\e$.  Note that the general results of \cite{FK} do not allow one to obtain compactness, since there the decay rate depends on the ellipticity constant, which vanishes in the high-contrast setting  as $\epsilon\to 0$. Combining the results of \cite{cherdantsev} and \cite{KS} one also gets an asymptotic one-to-one correspondence between the defect modes of $\A^\e$ and $\A^\hom$. The final remark we make about the high-contrast periodic setting is that  both $u_0$ and $u_\e$ (for small $\e$) decay exponentially as $e^{-\alpha |x|}$, where $\alpha$ is of order $\sqrt{|\beta(\l_0)|}$  with proportionality factor depending on the ellipticity constant  of the homogenised coefficients. Note that the quantity $\sqrt{|\beta(\l_0)|}$ blows up as $\l_0$ to approach the left end  of a spectral gap; therefore, the latter provides a much better rate of decay in the high-contrast setting than the more general result  \cite{FK} of Figotin and Klein. 
	
 Whilst for {\it stochastic} operators the picture is to a certain extent similar, at the same time there is a number of significant technical and phenomenological differences. The spectrum of $\hat \A^\hom$ can be characterised by a stochastic analogue of Zhikov's function $\beta(\l)$ --- see  \cite{CCV2} and Section~\ref{Statement of the problem} below --- however, the limit of the spectra of $\hat\A^\e$ is, in general, strictly larger than $\sigma(\hat \A^\hom)$ (in fact, in some examples $\sigma(\hat\A^\e) = \R_+$ \cite[\S~5.6]{CCV2}). In \cite{CCV2} the authors show that $\sigma(\A^\hom) \subset \lim_{\e\to 0} \sigma(\hat\A^\e) \subset \mathcal G$, where the set $\mathcal G$  is determined by a function $\beta_\infty(\l)$ --- a  modification of the function $\beta(\l)$ which carries an information about  areas with ``non-typical'' distribution of inclusions in $\R^d$.  In realistic examples, e.g. the random parking model \cite[\S~5.6]{CCV2}, the set $\mathcal{G}$ can be shown to have gaps, and hence so does $\sigma(\hat\A^\e)$ for small enough $\e$.

In this work we adapt the general strategy of \cite{cherdantsev} and \cite{KS} to the stochastic setting. While postponing a detailed description of our results to later section, let us briefly elaborate on the novelty and challenges posed by the random setting.

We restrict our attention to the gaps of $\mathcal G$ (assuming that they exist), rather than the gaps of $\sigma(\hat \A^\hom)$. Indeed, we have to do this in order to avoid  the limit set of $\sigma(\hat\A^\e)$ (note that under the assumption of finite range of dependence of the distribution of inclusions in $\R^d$ one has $\lim_{\e\to 0} \sigma(\hat\A^\e) = \mathcal G$, see \cite[Theorem~5.5]{CCV2}).
The two-scale nature of the limiting operator $\A^\hom$ captures the micro-  and macroscopic scales of $\A^\e$. We perform a  delicate analysis, incorporating the subtle relation between the multiple scales of   the operators $\A^\e$ and $\A^\hom$, in order to establish an asymptotic one-to-one correspondence between the defect modes of the operators $\A^\hom$ and $\A^\e$ with the eigenvalues in the gaps of the set $\mathcal G$.

Another manifestation of the technical difficulties arising from the stochastic nature of the problem is that while for a defect eigenfunction of $\A^\hom$ the exponential decay rate is determined by the stochastic version of $\beta(\l_0)$ (in direct analogy to the periodic setting), for the defect eigenfunctions of $\A^\e$ we were only able to characterise the exponential decay in terms of $\beta_\infty(\l_0)$ (the function that  carries information about the limit of the spectra of $\A^\e$) satisfying
\[
\beta(\l_0) \leq\beta_\infty(\l_0),
\]
thus obtaining a slightly worse decay rate than one would hope for. At this stage, it is not entirely clear whether this result is optimal.

}

\section{Statement of the problem}
\label{Statement of the problem}

The precise mathematical formulation of our model is as follows. 

We work in Euclidean space $\R^d$, $d\ge 2$, equipped with the Lebesgue measure. For every measurable subset $A\subset \R^d$ we denote by $\overline{A}$ its closure\textcolor{black}{,} by $|A|:=\int_A dx$ its Lebesgue measure \textcolor{black}{and by $\mathbbm{1}_A$ its characteristic function}. Furthermore, we denote by 
\[
\langle f \rangle_A:=\frac{1}{|A|} \int_A f(x) \,dx
\]
the average over $A$ of a function $f\in L^1(A)$. 
\color{black}
Finally, we define
\begin{equation}
\label{definition box centred at x}
\square_x^L:=[-\nicefrac{L}2,\nicefrac{L}2]^d+x
\end{equation}
to be the box centred at $x$ of side $L$ and we set $\square^L:=\square_0^L$.
\color{black}

Throughout the paper, the letter $C$ is used in estimates to denote a positive numerical constant, whose precise value is inessential and can change from line to line. Furthermore,  $B_r(x)$ denotes the open ball of radius $r$ centred at $x$. Finally, as per standard practice in the field, we do not distinguish in our notation $L^p$ and Sobolev spaces of scalar, vector and matrix functions: which is which will be clear from the context.

\color{black}

Before rigorously introducing our probability space, let us recall the well-established notion of minimal smoothness \cite[Chapter~VI,  Section~3.3]{stein}, for the reader's convenience.

\color{black}

\begin{definition}[Minimally smooth set]
An open set $A \subset \R^d$ is said to be {\it minimally smooth with constants $(\zeta, N, M)$}
if there exists a countable (possibly finite) family of open sets $\{U_i\}_{i\in I}$ such that
	\begin{enumerate}
		\item[(a)]  each $x\in\R^d$ is contained in at most $N$ of the open sets $U_i$;
		\item[(b)] for every $x\in\partial A$ there exists $i\in I$ such that $B_\zeta(x)\subset U_i$;
		\item[(c)] for every $i\in I$ the set $\partial A\cap U_i$ is,  in a suitably chosen coordinate system,  the graph of a Lipschitz function with Lipschitz seminorm not exceeding $M$.
	\end{enumerate}
\end{definition}

\color{black}

Our probability space $(\Omega, \mathcal{F}, P)$ is defined as follows.

We define $\Omega$ to be the set of possible collections of randomly distributed inclusions in $\R^d$, that is, elements $\omega\in \Omega$ are subsets of $\R^d$ satisfying appropriate geometric conditions. Namely, we require that individual inclusions are approximately of the same size, that they do not get too close to each other and that their boundary is sufficiently regular. This is formalised by the following collection of assumptions.

\color{black}

\begin{assumption}
\label{main assumption}
\textcolor{black}{There exist constants $\zeta$, $N$ and $M$ such that} for all $\omega\in \Omega$ the set $\R^d \setminus \textcolor{black}{\omega}$ is connected, \textcolor{black}{and $\omega$} can be written as a disjoint union
\begin{equation*}
\label{realisation as sum of inclusions}
\textcolor{black}{\omega=\bigsqcup_{k\in \N} \omega^k}
\end{equation*}
of sets satisfying the following properties.
\begin{enumerate}
\item[(a)]
For every $k\in \N$ the set $\textcolor{black}{\omega}^k$ is open and connected.

\item[(b)]
There exists a family $\{\B_\omega^k\}_{k\in\N}$ of bounded open subsets of $\R^d$ such that 
%$\overline{\mathcal{O}_\omega^k}\subset B_\omega^k$ and 
for every $k\in\N$ we have\footnote{Here and further on
\[
\operatorname{diam}A:=\sup_{\textcolor{black}{t,s}\in A}|\textcolor{black}{t-s}|.
\]}

\begin{enumerate}[(i)]
\item 
$\textcolor{black}{\omega}\cap \B_\omega^k=\textcolor{black}{\omega^k}$;

\item
$\B_\omega^k\setminus \overline{\textcolor{black}{\omega^k}}$ is non-empty and minimally smooth with constants $(\zeta,N,M)$;

\item 
$
\operatorname{diam} \textcolor{black}{\omega^k} <\frac12
$
and
\[
\textcolor{black}{\omega^k}-D_\omega^k-d_{1/4}\subset \B_\omega^k-D_\omega^k-d_{1/4}\subset [-\nicefrac{1}{2},\nicefrac{1}{2}]^d,
\] 
where $D_\omega^k\in \R^d$ is defined by 
\[
(D_\omega^k)_j:=\inf_{x\in \textcolor{black}{\omega^k}} x_j
\]
and $d_{1/4}:=(1/4, \ldots ,1/4)^T \in \R^d$.
\end{enumerate}
\end{enumerate}
\end{assumption}

\begin{remark}
The above Assumption warrants a few observations.
\begin{itemize}
%\item It is worth stressing that the key point of Assumption~\ref{main assumption} is that our five constants are \emph{uniform} both in $\omega \in \Omega$ and in $k\in\N$.

\item The quantity $D_\omega^k$ appearing in Assumption~\ref{main assumption}(b)(iii) represents, from a geometric point of view, the ``bottom-left'' corner of the smallest hypercube containing the inclusion $\textcolor{black}{\omega^k}$. Its introduction serves the purpose of shifting the inclusion to the origin, so that it fits into the unit hypercube $[-\nicefrac{1}{2},\nicefrac{1}{2}]^d$.

\item The precise value of the bounds on the size of $\textcolor{black}{\omega^k}$ and $\B_\omega^k$ (e.g., the requirement that they fit into a hypercube of size 1) are purely conventional.  One could impose more general boundedness assumptions and recover our specific values by elementary scaling arguments.

%\item Assumption~\ref{main assumption}(d) is needed to ensure density results for functions spaces on $\Omega$, see Theorem~\ref{density theorem}. It is not difficult to see that Assumption~\ref{main assumption}(d) is satisfied if the inclusions $\O_\omega^k$ have boundaries that are uniformly Lipschitz with respect to $\omega$ and $k$.
\end{itemize}
\end{remark}

\color{black}
We define $\mathcal{F}$ to be the $\sigma$-algebra on $\Omega$ generated by the mappings $\pi_q:\Omega \to\{0,1\}$, where $q \in \mathbb{Q}^d$ and 
\begin{equation}
\label{map pi}
\pi_q(\omega):=\mathbbm{1}_{\omega} (q). 
\end{equation}
That is, $\mathcal{F}$ is the smallest $\sigma$-algebra that makes the mappings $\pi_q$, $q \in \mathbb{Q}^d$, measurable.
There is a natural group action of $\R^d$ on $\Omega$, and hence on $\mathcal{F}$, given by translations. Namely, for every $y\in \R^d$ the translation map $T_y:\Omega\to \Omega$ acts on $\omega\in \Omega$ as
\begin{equation*}
\omega\mapsto T_y\omega=\{z-y \ | \ z\in \omega\}\subset \R^d.
\end{equation*}
We equip $(\Omega,\mathcal{F})$ with a probability measure $P$ assumed to be invariant under translations, i.e., we assume that $P(T_y F)=P(F)$ for every $F\in \mathcal{F}$ and $y\in \R^d$, where $T_y F:=\bigcup_{\omega\in F}T_y \omega$.
It is easy to see that $(T_y)_{y \in \R^d}$  satisfies the following properties:
	\begin{enumerate}[(a)]
	\item $T_{y_1} \circ T_{y_2}=T_{y_1+y_2}\ $ for all $y_1,y_2 \in \R^d$, where $\circ$ stands for composition;
	\item the map $\mathcal{T}: \R^d \times\Omega \to \Omega,\ $ $(y, \omega)\to T_y (\omega)$ is measurable with respect to the standard $\sigma$-algebra on the product space induced by $\mathcal{F}$ and the Borel $\sigma$-algebra on $\R^d$.
\end{enumerate} 

Finally, we assume the translation group action $(T_y)_{y\in \R^d}$ to be ergodic, i.e. if an element $F\in\mathcal{F}$ satisfies
\[
P((T_yF\cup F)\setminus (T_yF \cap F))=0 \quad \text{for all}\quad y\in\R^d,
\]
then $P(F)\in \{0,1\}$.

\

We adopt the standard notation
\[
\E[\overline{f}]:=\int_\Omega \overline{f}(\omega)\, dP(\omega)
\]
and we denote by $L^p(\Omega)$ the usual spaces of $p$-integrable functions in $(\Omega, \mathcal{F},P)$. Since $\mathcal{F}$ is, clearly, countably generated, $L^p(\Omega)$, $1\le p<\infty$, is separable. Given $\overline{f}:\Omega \to \R$, we denote by $f(y,\omega):=\overline{f}(T_y\omega)$ its \emph{realisation} or \emph{stationary extension}. Note that if $\overline{f}\in L^p(\Omega)$, then $f\in L^p_\mathrm{loc}(\R^d;L^p(\Omega))$ \cite[Chapter~7]{ZKO}. In the current paper we are mostly concerned with the case $p=2$.

\begin{notation}
Throughout our paper, unless otherwise stated, we will denote with an overline functions on $\Omega$ and we will remove the overline to denote the corresponding realisation (stationary extension). We will reserve the letter $y$ for the extension variable. %Furthermore, we will tacitly assume the extension variable to be set equal to zero when taking the expectation. 
So, for example:
\[
\overline{f}=\overline{f}(\omega), \quad f=f(y,\omega):=\overline{f}(T_y\omega), \quad \E[f]:=\E[\overline{f}].
\]
Functions on $\Omega$ may additionally depend on other variables, not necessarily in a stationary manner. For example, if $\overline{f}(x,\omega):\R^d \times \Omega \to \R$, then $f(x,y,\omega):=\overline{f}(x,T_y\omega)$ and $\E[f]=\E[\overline{f}(x,\cdot)]$ (note that the dependence on the non-stationary variable $x$ remains upon taking the expectation).
\end{notation}

%Let $\{e_j\}_{j=1}^d$ be the standard basis of $\R^d$. 
We define the Sobolev spaces $H^s(\Omega)$, $s\in \N$, as
\begin{equation}
\label{definition Ws2}
H^s(\Omega):=\left\{\overline{f}\in L^2(\Omega)\ | \ f\in H^s_\mathrm{loc}(\R^d;L^2(\Omega))\right\},
\end{equation}
%where $\boldsymbol{\alpha}$ is a multi-index, $\partial_{\boldsymbol{\alpha}=}\partial_{y_1}^{\alpha_1}\dots \partial_{y_d}^{\alpha_d}$ and $\partial_{y_j}$ is the partial derivative in the variable $y$ in the direction $e_j$. 
and denote
\begin{equation*}
\label{definition Winfty2}
H^\infty(\Omega):=\bigcap_{s\in \N}H^s(\Omega).
\end{equation*}

We use a standard notation for partial derivatives $\partial^{\boldsymbol{\alpha}}:=\partial^{\alpha_1}_{y_1}\cdots \partial^{\alpha_d}_{y_d}$ with a multi-index $\boldsymbol{\alpha}=(\alpha_1,\ldots,\alpha_d)\in \N_0^d$, $\sum_{l=1}^d\alpha_l\le s$. We note that, given $\overline{f}\in H^s(\Omega)$, for every multi-index $\boldsymbol{\alpha}=(\alpha_1,\ldots,\alpha_d)\in \N_0^d$, $\sum_{l=1}^d\alpha_l\le s$, the quantity $\partial^{\boldsymbol{\alpha}}f$ is the stationary extension of an element from $L^2(\Omega)$. Accordingly, the quantity $\overline{\partial^{\boldsymbol{\alpha}}f}$ is to be understood as the random variable whose stationary extension is
$\partial^{\boldsymbol{\alpha}}f$.  In view of the above one defines the norm on \eqref{definition Ws2} as 
\begin{equation*}
	\left\|\overline{f}\right\|^2_{H^s(\Omega)} : = \sum_{|\boldsymbol{\alpha}|\leq s} \left\|\overline{\partial^{\boldsymbol{\alpha}} f}\right\|^2_{L^2(\Omega)}.
\end{equation*}
We also define
\begin{equation}
\label{definition Cinfty}
C^\infty(\Omega):=\left\{\overline{f}\in H^\infty(\Omega)\ | \ \overline{\partial^{\boldsymbol{\alpha}}f}\in L^\infty(\Omega) \text{ for every multi-index } \boldsymbol{\alpha}=(\alpha_1,\ldots,\alpha_d)\in \N_0^d \right\}.
\end{equation}

\begin{remark}
Observe that the definition of probabilistic Sobolev spaces retraces the classical one.
We refrain from introducing the stationary differential calculus on $\Omega$ more formally, as it will not be needed in this paper. We refer the interested reader to \cite[Appendices~A.2 and~A.3]{DG} for further details.
\end{remark}

Finally, we define 
\begin{equation*}
\label{definition L2zero}
L^{2}_0(\Omega):=\left\{\overline{f}\in L^{2}(\Omega)\ | \ \left.f(\cdot, \omega)\right|_{\R^d\setminus \omega}=0 \ \text{for all} \ \omega \in \Omega\right\},
\end{equation*}
\begin{equation*}
\label{definition Ws2zero}
H^s_0(\Omega):=\left\{\overline{f}\in H^s(\Omega)\ | \ \left.f(\cdot, \omega)\right|_{\R^d\setminus \omega}=0 \ \text{for all} \ \omega \in \Omega\right\}
\end{equation*}
and
\begin{equation*}
\label{definition Cinfinityzero}
C^\infty_0(\Omega):=\left\{\overline{f}\in C^\infty(\Omega)\ | \ \left.f(\cdot, \omega)\right|_{\R^d\setminus \omega}=0 \ \text{for all} \ \omega \in \Omega\right\}.
\end{equation*}
The latter are spaces of functions in $L^2(\Omega)$, $H^s(\Omega)$ and $C^\infty(\Omega)$, respectively, whose realisations vanish identically outside the inclusions.

The above function spaces enjoy the following properties:
(i) $H^1(\Omega)$ is separable;
(ii) $H^\infty(\Omega)$ is dense in $L^2(\Omega)$;
(iii) $C^\infty(\Omega)$ is dense in $L^p(\Omega)$, $1\le p<\infty$;
(iv) $C^\infty(\Omega)$ is dense in $H^s(\Omega)$, endowed with the natural Banach space structure, for every $s$. Furthermore, we have at our disposal the following result, whose proof may be found in \cite{CCV1,CCV2}.

\begin{theorem}
\label{density theorem}
Under Assumption~\ref{main assumption}, $C_0^\infty(\Omega)$ is dense in $L_0^2(\Omega)$ and in $H^{1}_0(\Omega)$ with respect to $\|\cdot\|_{L^2(\Omega)}$ and $\|\cdot\|_{H^{1}(\Omega)}$.
\end{theorem}

\color{black}

A bridge between objects in physical space and objects in the abstract probability space is provided by the Ergodic Theorem, a classical result on ergodic dynamical systems that appears in the literature in various (not always equivalent) formulations. For the reader's convenience, we report here the version that we will be using in our paper, see, e.g., \cite{RS}, or \cite{AK} for a more general take.

\begin{theorem}[Birkhoff's Ergodic Theorem]
\label{ergodic theorem}
Let $(\Omega, \mathcal{F}, P)$ be a complete probability space equipped with an ergodic dynamical system $(T_y)_{y\in \mathbb{R}^d}$. Let $\textcolor{black}{\overline{f}}\in L^p(\Omega)$,  $1\le p< \infty$. Then we have 
\[
f(\,\cdot\,/\epsilon,\omega)=\overline{f}(T_{\cdot/\epsilon}\omega) \rightharpoonup \textcolor{black}{\E[\overline{f}]}
\]
in $L^p_\mathrm{loc}(\R^d)$ as $\epsilon\to 0$ \textcolor{black}{almost surely}.
\end{theorem}

\color{black}
\begin{remark}
Observe that Theorem~\ref{ergodic theorem} implies
\begin{equation}
\label{ergodic theorem concrete}
\lim_{R\to +\infty} \frac{1}{R^d}\int_{\square^R} f(y,\omega)\,dy=\textcolor{black}{\E[\overline{f}]}.
\end{equation}
Further on in the paper, we will often apply the Ergodic Theorem in the more concrete version \eqref{ergodic theorem concrete}.
\end{remark}
\color{black}

%We define
%\begin{equation}
%\label{typical omegas}
%\Omega_t:=\{\omega \in \Omega\,|\, \omega\text{ statisfies Assumption~\ref{main assumption} and Theorem~\ref{ergodic theorem}}\}
%\end{equation}
%to be the set of \emph{typical} elements of $\Omega$. Clearly, $P(\Omega\setminus \Omega_t)=0$.
%
%\

As shown in \cite{CCV2},  the assumption of uniform minimal smoothness is sufficient to ensure that our inclusions possess the extension property, which will prove essential throughout our paper.

\begin{theorem}[Extension property {\cite[Theorem~\textcolor{black}{3}.8]{CCV2}}]
\label{extension theorem}
For every $p\ge1$ there exists a bounded linear extension operator $E_k : W^{1,p}(\B^k_{\omega}\setminus \textcolor{black}{\omega^k})  \to  W^{1,p}(\B^k_{\omega})$ such that for every $u \in W^{1,p}(\B^k_{\omega}\setminus \textcolor{black}{\omega^k})$ the extension $\widetilde u:= E_k u$ satisfies the estimates
\begin{equation*}
\label{extension theorem equation 1}
\|\nabla \widetilde u\|_{L^p(\B^k_{\omega})}\leq c\, \|\nabla u\|_{L^p(\B^k_{\omega}\setminus \textcolor{black}{\omega^k})}\,,
\end{equation*}
\begin{equation*}
\label{extension theorem equation 2}
\| \widetilde{u} \|_{W^{1,p}(\B^k_{\omega})}\leq c \,\|  u\|_{W^{1,p}(\B^k_{\omega}\setminus \textcolor{black}{\omega^k})}, 
\end{equation*}
where the constant $c$ depends on $\zeta, N, M, p$ but is independent of $\omega$ and $k$. 
Furthermore,  if $p=2$ the extension can be chosen to be harmonic in $\textcolor{black}{\omega^k}$,
\begin{equation*}
\label{extension theorem equation 3}
\Delta \widetilde u=0 \textrm{ in } \textcolor{black}{\omega^k}.
\end{equation*}
\end{theorem}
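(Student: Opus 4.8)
The plan is to deduce the statement from Stein's extension theorem for minimally smooth domains \cite[Chapter~VI]{stein} and then to upgrade its output to the homogeneous and harmonic forms required here. First I would apply Stein's theorem to the open set $\B^k_\omega\setminus\overline{\O^k_\omega}$, which by Assumption~\ref{main assumption}(b)(ii) is minimally smooth with constants $(\zeta,N,M)$ that do not depend on $\omega$ or $k$. Stein's construction then yields a bounded linear extension operator $\mathcal{E}_k\colon W^{1,p}(\B^k_\omega\setminus\overline{\O^k_\omega})\to W^{1,p}(\R^d)$ whose operator norm is controlled purely in terms of $\zeta,N,M,d,p$, hence uniformly in $\omega$ and $k$; restricting $\mathcal{E}_k u$ to $\B^k_\omega$ already gives an extension satisfying $\|\widetilde u\|_{W^{1,p}(\B^k_\omega)}\le c\,\|u\|_{W^{1,p}(\B^k_\omega\setminus\O^k_\omega)}$, which is weaker than the two displayed inequalities.

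To reach the homogeneous gradient estimate I would apply $\mathcal{E}_k$ to $u-\langle u\rangle$, where here and below $\langle u\rangle:=\langle u\rangle_{\B^k_\omega\setminus\O^k_\omega}$, and set $\widetilde u:=\mathcal{E}_k(u-\langle u\rangle)+\langle u\rangle$. Since $\nabla\widetilde u=\nabla\mathcal{E}_k(u-\langle u\rangle)$, the Stein bound combined with a Poincaré inequality on $\B^k_\omega\setminus\O^k_\omega$ gives
\[
\|\nabla\widetilde u\|_{L^p(\B^k_\omega)}\le c\,\|u-\langle u\rangle\|_{W^{1,p}(\B^k_\omega\setminus\O^k_\omega)}\le c\,\|\nabla u\|_{L^p(\B^k_\omega\setminus\O^k_\omega)},
\]
which is the first estimate; the second follows by adding the constant back and using the uniform bounds $|\B^k_\omega|\le 1$ (Assumption~\ref{main assumption}(b)(iii)) and $|\B^k_\omega\setminus\O^k_\omega|\ge c_0>0$ — the latter again a consequence of the uniform minimal smoothness — to control $|\langle u\rangle|$ by $\|u\|_{L^p(\B^k_\omega\setminus\O^k_\omega)}$.

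For the case $p=2$ I would keep $\widetilde u$ on $\B^k_\omega\setminus\O^k_\omega$ and replace it inside $\O^k_\omega$ by the function $v$ that minimises the Dirichlet energy $\int_{\O^k_\omega}|\nabla g|^2$ over all $g\in W^{1,2}(\O^k_\omega)$ whose trace on $\partial\O^k_\omega$ equals that of $u$ (equivalently, that of $\widetilde u$); such $v$ exists, is harmonic in $\O^k_\omega$, and the traces match, so the glued function lies in $W^{1,2}(\B^k_\omega)$ and still agrees with $u$ on $\B^k_\omega\setminus\O^k_\omega$. By Dirichlet's principle $\|\nabla v\|_{L^2(\O^k_\omega)}\le\|\nabla\widetilde u\|_{L^2(\O^k_\omega)}$, so the gradient estimate is preserved (indeed improved). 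For the $L^2$ estimate, $v-\widetilde u\in W^{1,2}_0(\O^k_\omega)$, so Poincaré's inequality with the elementary constant $\le C_d\operatorname{diam}\O^k_\omega<C_d/2$ controls $\|v-\widetilde u\|_{L^2(\O^k_\omega)}$ by $\|\nabla v\|_{L^2(\O^k_\omega)}+\|\nabla\widetilde u\|_{L^2(\O^k_\omega)}$, and adding $\|\widetilde u\|_{L^2(\O^k_\omega)}$, already bounded by the previous step, yields the second estimate for the new extension.

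I expect the main difficulty to lie not in any single step but in the uniformity of the constants, and above all in the Poincaré inequality on $\B^k_\omega\setminus\O^k_\omega$ with a constant independent of $\omega$ and $k$. A compactness argument is unavailable, since there are infinitely many genuinely distinct domains; instead the uniform Poincaré constant must be extracted from the uniform minimal smoothness together with the uniform diameter bounds in Assumption~\ref{main assumption} — for instance by covering the set with a uniformly bounded number of Lipschitz charts of comparable scale and chaining the local inequalities — and it is here that some uniform connectedness (a uniform John- or chain-type condition) for the family $\{\B^k_\omega\setminus\O^k_\omega\}_{k,\omega}$ is implicitly needed for the homogeneous estimate to hold at all. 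A related, but essentially routine, point is to verify that Stein's construction is genuinely insensitive to the number and relative position of the boundary patches beyond the three constants $(\zeta,N,M)$.
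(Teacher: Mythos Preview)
The paper does not prove this statement; it is quoted verbatim as \cite[Theorem~2.8]{CCV2} and used as a black box. Your sketch is therefore not competing with anything in the present text, and it is in fact the standard route one would follow to establish such a result: Stein's extension on a uniformly minimally smooth domain, the mean-subtraction trick together with a uniform Poincar\'e inequality to pass from the full $W^{1,p}$ bound to the homogeneous gradient bound, and the Dirichlet-principle replacement inside $\O^k_\omega$ for the harmonic $p=2$ extension. Each step is sound, and you have correctly located the one genuinely nontrivial point, namely the uniform-in-$(\omega,k)$ Poincar\'e constant on $\B^k_\omega\setminus\O^k_\omega$; this is exactly the kind of estimate that \cite{CCV2} derives from the uniform minimal smoothness and the diameter bounds of Assumption~\ref{main assumption}. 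One small clarification worth making explicit is that your modified extension $u\mapsto \mathcal{E}_k(u-\langle u\rangle)+\langle u\rangle$ and the subsequent harmonic replacement are both linear in $u$, so the resulting $E_k$ is indeed a bounded linear operator as claimed.
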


%For technical reasons, in Section~\ref{Uniform exponential decay of eigenfunctions} we shall need the following additional assumption.
%\begin{assumption}
%\label{assumption number of shapes intersecting the boundary}
%The open sets $\O_\omega^k$, $k\in \N$, are Lipschitz equivalent to a finite number of domains
%\[
%\mathcal{U}_l\subset \R^d, l \in \Lambda,\qquad \#\Lambda<+\infty.
%\]
%\end{assumption}
%
%Assumption~\eqref{assumption number of shapes intersecting the boundary} will be needed in the proof of Lemma~\ref{lemma L infinity estimate}.

Let $\textcolor{black}{\mathcal{D}}\subset \R^d$ be a fixed open domain with $C^{1,\alpha}$ boundary, for some $\alpha>0$. For each $\omega\in \Omega$ and $0<\epsilon<1$ put
\begin{equation}
\label{28 April 2020 equation 3}
\N^\epsilon(\omega):=\{k\in \textcolor{black}{\N}\,|\, \epsilon \textcolor{black}{\omega^k} \cap \overline{\textcolor{black}{\mathcal{D}}}= \emptyset\}.
\end{equation}
Up to a set of measure zero, \textcolor{black}{for each random set of inclusions $\omega$} we partition $\R^d$ into three parts:
the \emph{defect} $\textcolor{black}{\mathcal{D}}$, the \emph{inclusions}
\begin{equation*}
\label{28 April 2020 equation 4}
\textcolor{black}{\mathcal{I}^\epsilon}(\omega):=\bigcup_{k\in \N^\epsilon(\omega)} \varepsilon\textcolor{black}{\omega^k}
\end{equation*}
and the \emph{matrix}
\begin{equation*}
\label{28 April 2020 equation 5}
\textcolor{black}{\mathcal{M}^\epsilon}(\omega):=\R^d\setminus \overline{\textcolor{black}{\mathcal{I}^\epsilon}(\omega) \cup \textcolor{black}{\mathcal{D}}},
\end{equation*}
see Figure~\ref{figure_setup}.
Without loss of generality, we assume that $0\in \textcolor{black}{\mathcal{D}}$. We shall often drop the argument $\omega$ and write simply $\textcolor{black}{\mathcal{I}^\epsilon}$ and $\textcolor{black}{\mathcal{M}^\epsilon}$, to keep notation light.

\begin{figure}
\centering
\includegraphics[width=12cm]{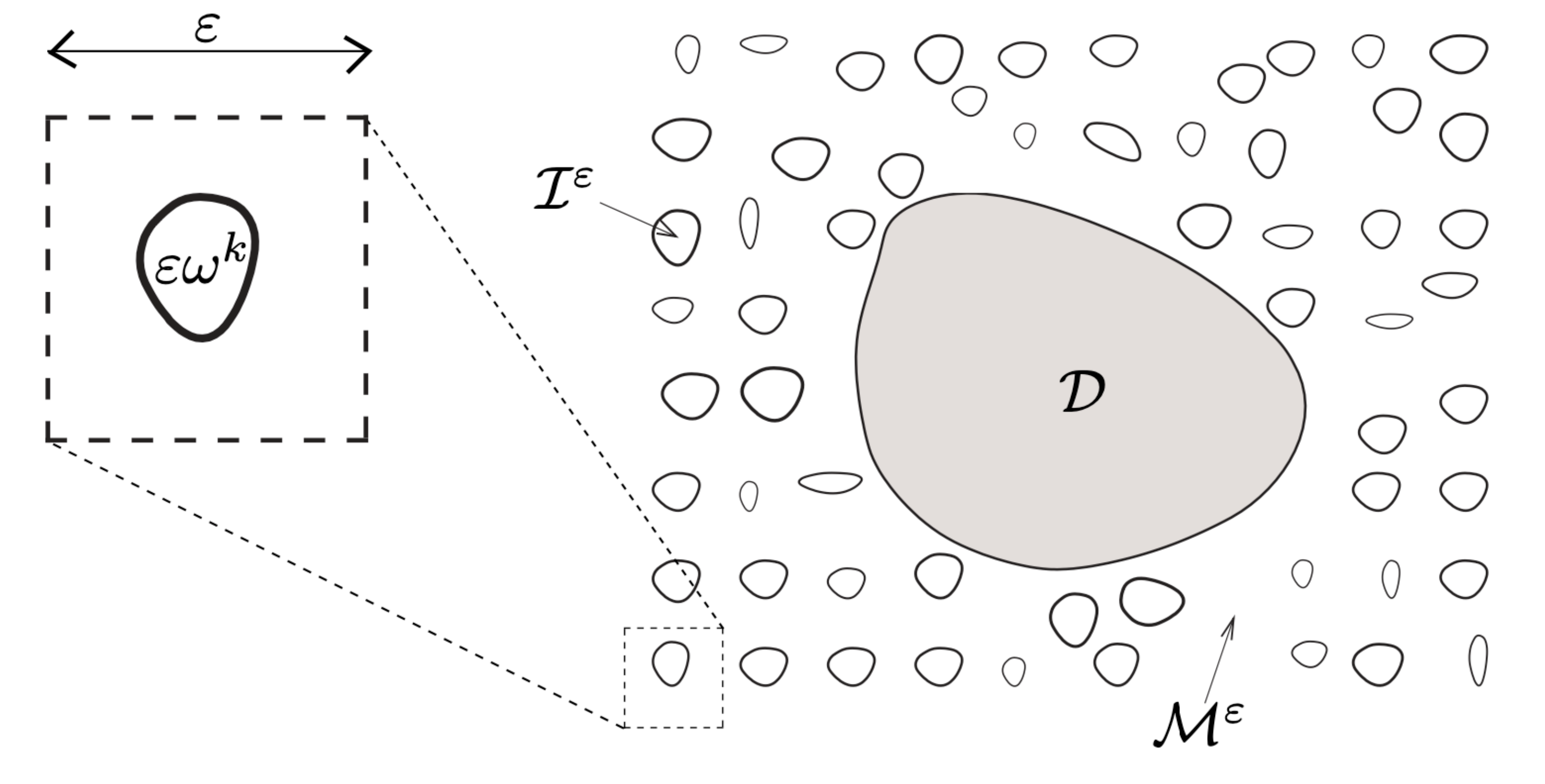}
\caption{Mathematical model.}
\label{figure_setup}
\end{figure}

\begin{remark}
\label{remark removing inclusions near boundary of defect}
Note that in the above construction we discard entirely the inclusions intersecting the boundary of the defect. Though our results could be obtain\textcolor{black}{ed} without such an assumption --- i.e., keeping, for those inclusions intersecting $\partial \textcolor{black}{\mathcal{D}}$, the portion of inclusion that lies outside the defect --- we do so for two reasons. The first is that this is not unreasonable from the point of view of potential applications, where inclusions are dispersed in the medium around the defect. The second is that doing so reduces the amount of technical material needed in the proofs, thus improving clarity and readability of the paper without compromising the key ideas and techniques.
\end{remark}

\color{black}

Let $1_\Omega:\R^d\times \Omega \to \{0,1\}$ be the stationary function defined in accordance with
\begin{equation}
\label{definition 1 Omega}
1_\Omega(y,\omega):=\mathbbm{1}_\omega(y).
\end{equation}
Then, by the Ergodic Theorem we have
\[
\E[1_\Omega]=\lim_{R\to+\infty}\frac{1}{R^d}\int_{\square^R} \mathbbm{1}_\omega(y)\, dy
\]
almost surely; therefore, the quantity $\E[1_\Omega]$ is the relative density of inclusions outside the defect.

\color{black}

\begin{definition}
We define $\A^\epsilon(\omega)$ to be the self-adjoint linear operator in $L^2(\R^d)$ associated with the bilinear form
\begin{equation*}
\label{28 April 2020 equation 6}
\int_{\R^d} \textcolor{black}{{A}}^\epsilon(\,\cdot\,,\omega) \nabla u \cdot \nabla v, \quad u,v \in H^1(\R^d),
\end{equation*}
where 
\begin{equation}
\label{28 April 2020 equation 7}
\textcolor{black}{{A}}^\epsilon(x,\omega):=\textcolor{black}{\mathbbm{1}_{\mathcal{M}^\epsilon}}(x)\,A_1 +  \epsilon^2 \textcolor{black}{\mathbbm{1}_{\mathcal{I}^\epsilon}}(x)\,\operatorname{Id}  + \textcolor{black}{\mathbbm{1}_{\mathcal{D}}}(x)\, A_2,
\end{equation}
%\begin{equation*}
%A_1^\epsilon(x,\omega):=A_1(T_{x/\epsilon}\omega),
%\end{equation*}
and $A_1$, $A_2$ are positive definite symmetric matrices in $\mathrm{GL}(d,\R)$.
%\begin{itemize}
%\item $A_1:\Omega \to GL(n, \R)$ is a measurable matrix function satisfying $A_1(\omega)=A_1(\omega)^T$, 
%\begin{equation}
%\label{ellipticity A1}
%c |\xi|^2 \le A_1(\omega)\xi \cdot \xi \le C |\xi|^2
%\end{equation}
%for some $0<c\le C$ uniformly in $\omega$,
%
%\item $A_2$ is a positive definite symmetric matrix in $GL(d,\R)$.
%\end{itemize}
\end{definition}

In order to introduce the {\color{black} limiting operator} we need the function spaces
\color{black} $H$ and $V$ defined as follows.

\begin{definition}[Function space $H$]
We define
\begin{equation}
\label{28 April 2020 equation 8}
H:=L^2(\R^d) +\{\overline{v}\in L^2(\R^d; L^2_0(\Omega))\,|\,\overline{v}(x,\omega)=0 \text{ for } x\in \mathcal{D}\},
\end{equation}
as the space of functions of the form $u+\overline{v}$, where $u(x)\in L^2(\R^d)$ and $\overline{v}(x,\omega)\in L^2(\R^d\times \Omega)$ is a random field whose stationary extension  $v(x,y,\omega)$ vanishes outside the inclusions (in $y$) and in the defect $\mathcal{D}$ (in $x$).
\end{definition}
\begin{definition}[Function space $V$]
We define 
\begin{equation}
\label{28 April 2020 equation 9}
V:=H^1(\R^d) +\{\overline{v}\in L^2(\R^d;H^1_0(\Omega))\,|\,\overline{v}(x,\omega)=0 \text{ for } x\in \mathcal{D}\}
\end{equation}
as the space of functions of the form $u+\overline{v}$, where $u\in H^1(\R^d)$ and $\overline{v}\in L^2(\R^d;H^1_0(\Omega))$ is a random field whose stationary extension $v(x,y,\omega)$ vanishes outside the inclusions (in $y$) and in the defect $\mathcal{D}$ (in $x$).
\end{definition}

\color{black}

Clearly, $H\subset L^2(\R^d \times \Omega)$. Furthermore,  since $L^2(\R^d\times \Omega)=L^2(\R^d;L^2(\Omega))$,  by Theorem~\ref{density theorem} we have that the space $V$ is dense in $H$ with respect to $L^2(\R^d\times \Omega)$ norm.

\color{black}

Recall, see, e.g., \cite{ZKO}, that a vector field $p\in L^2_{\mathrm{loc}}(\R^d)$ is said to be
\begin{itemize}
\itemsep.3em

\item
\emph{potential} if there exists $\varphi\in H^1_\mathrm{loc}(\R^d)$ such that $p=\nabla \varphi$;

\item
\emph{solenoidal} if 
\[
\int_{\R^d} p\cdot \nabla \varphi =0 \qquad \forall\varphi \in C_0^\infty(\R^d).
\]
\end{itemize}

By analogy, one defines a vector field $\overline{p}\in L^2(\Omega)$ to be potential (respectively solenoidal) if for a.e.~$\omega$ its realisation $y\mapsto p(y,\omega)\in L^2_{\mathrm{loc}}(\R^d)$ is such. The classical Weyl's decomposition theorem holds for vector fields in $L^2(\Omega)$ as well.

\begin{theorem}[Weyl's decomposition]
The space of vector fields $L^2(\Omega)$ admits the orthogonal decomposition
\begin{equation*}
\label{Weyl's decomposition equation 1}
L^2(\Omega):=\mathcal{V}^2_{\mathrm{pot}}(\Omega) \oplus \mathcal{V}^2_{\mathrm{sol}}(\Omega) \oplus \R^d,
\end{equation*}
where
\begin{equation*}
\label{Weyl's decomposition equation 2}
\mathcal{V}^2_{\mathrm{pot}}(\Omega):=\{\overline{p} \in L^2(\Omega)\,|\,\overline{p} \ \text{is potential}, \ \E[\overline{p}]=0 \},
\end{equation*}
\begin{equation*}
\label{Weyl's decomposition equation 3}
\mathcal{V}^2_{\mathrm{sol}}(\Omega):=\{\overline{p}  \in L^2(\Omega)\,|\,\overline{p} \ \text{is solenoidal}, \ \E[\overline{p}]=0 \}.
\end{equation*}
\end{theorem}

\color{black}

Let $A_1^\mathrm{hom}\in \mathrm{GL}(d,\R)$ be the \textcolor{black}{symmetric} matrix of homogenised coefficients arising from the stiff material (matrix) defined by
\begin{equation*}
\label{homogenised A1}
A_1^\mathrm{hom} \xi\cdot \xi := \inf_{\textcolor{black}{\overline{p}\in \mathcal{V}^2_{\mathrm{pot}}(\Omega)}} \textcolor{black}{\E[A_1(\xi+p)\cdot(\xi+p)(1-1_\Omega)]},\qquad \xi \in \R^d.
\end{equation*}
\color{black}
The existence of a minimiser in $\mathcal V^2_{\rm pot}(\Omega)$ for the above problem is guaranteed by \cite[\S~8.1]{ZKO} combined with the extension result \cite[Lemma~D.3]{CCV2}. Note that, in the setting of this paper, the space $\mathcal{X}$ from \cite{CCV2} is the closure of $\mathcal V^2_{\rm pot}(\Omega)$ with respect to the seminorm
$(\cdot)\mapsto \|(\cdot)(1-1_\Omega)\|_{L^2(\Omega)}$, see also Remark~\ref{remark old setting} below.
\begin{remark}
One can show, using $\Gamma$-convergence and Assumption~\ref{main assumption}, that the homogenised coefficients can be recovered almost surely from a given set of inclusions $\omega$ as
\begin{equation*}
A_1^\mathrm{hom} \xi\cdot \xi= \lim_{R\to+\infty} \inf_{u\in H_0^1(\square^R)} \left\{\frac{1}{R^d}\int_{\square^R\setminus \omega} A_1(\xi+\nabla u)\cdot(\xi+\nabla u)  \right\},\qquad \xi \in \R^d.
\end{equation*}
\end{remark}
\color{black}

\begin{definition} We define the {\color{black} limiting operator} $\A^\mathrm{hom}$ to be the self-adjoint linear operator in $H$ associated with the bilinear form
\begin{multline*}
\label{28 April 2020 equation 11}
\int_{\R^d\setminus \textcolor{black}{\mathcal{D}}} A_1^\mathrm{hom} \nabla u_0 \cdot \nabla \varphi_0+\int_{\textcolor{black}{\mathcal{D}}} A_2\nabla u_0 \cdot \nabla \varphi_0+\int_{\R^d\setminus \textcolor{black}{\mathcal{D}}}
\textcolor{black}{\E\left[\nabla_y u_1 \cdot \nabla_y \varphi_1\right] \,dx},
\\  u_0+\textcolor{black}{\overline{u_1}}, \, \varphi_0+\textcolor{black}{\overline{\varphi_1}}\in V.
\end{multline*}
(See also \cite[Remark~4.6]{CCV1} and \cite{zhikov2000}.)
\end{definition}

In an analogous manner, one can define the \emph{unperturbed operators} $\hat{\A}^\epsilon$ and $\hat{\A}^\mathrm{hom}$, i.e.~the corresponding operators in the absence of the defect, as follows.

\begin{definition}[Unperturbed operators]
We define $\hat{\A}^\epsilon$ to be the self-adjoint linear operator in $L^2(\R^d)$ associated with the bilinear form
\begin{equation*}
\label{bilinear form unperturbed A epsilon}
\epsilon^2 \int_{\epsilon \textcolor{black}{\omega}} \nabla u\cdot \nabla v
+
\int_{\R^d \setminus \overline{\epsilon \textcolor{black}{\omega}}} A_1 \nabla u \cdot \nabla v, \qquad u,v\in H^1(\R^d).
\end{equation*}

Let
\color{black}
\begin{equation*}
\label{28 April 2020 equation 8 bis}
\hat H:=L^2(\R^d) + L^2(\R^d; L^2_0(\Omega)),
\qquad
\hat V:=H^1(\R^d) +L^2(\R^d;H^1_0(\Omega)).
\end{equation*}
The spaces $\hat H$ and $\hat V$ are the analogues of $H$ and $V$ defined in \eqref{28 April 2020 equation 8}, \eqref{28 April 2020 equation 9} obtained by formally setting $\mathcal{D}=\emptyset$.
\color{black}

We define $\widehat \A^\mathrm{hom}$ to be the  self-adjoint linear operator in $\hat H$ associated with the bilinear form
\begin{equation*}
\label{28 April 2020 equation 11bis}
\int_{\R^d} A_1^\mathrm{hom} \nabla u_0 \cdot \nabla \varphi_0+
\int_{\R^d}
\textcolor{black}{\E\left[\nabla_y u_1 \cdot \nabla_y \varphi_1\right] \,dx}, \quad u_0+\textcolor{black}{\overline{u_1}}, \, \varphi_0+\textcolor{black}{\overline{\varphi_1}}\in \hat V.
\end{equation*}
\end{definition}

The spectra of the unperturbed operators $\hat{A}^\epsilon$ and $\hat{\A}^\mathrm{hom}$ were studied in \cite{CCV1,CCV2}. We  summarise below the results therein which are relevant to our work, referring the reader to the original papers for further details and a more complete picture. \color{black}
Before doing so, for the convenience of the reader, let us briefly explain how the setting of \cite{CCV1,CCV2} relates to the setting of our paper, as we will be referring to results from \cite{CCV1,CCV2} several times further on.

\begin{remark}
\label{remark old setting}
In \cite{CCV1,CCV2}	 the probability framework is introduced as follows. One starts with an abstract probability space $(\widetilde{\Omega},\widetilde{\mathcal{F}},\widetilde{P})$, $\widetilde{\mathcal{F}}$ being countably generated, equipped with an ergodic dynamical system $(\widetilde{T}_y)_{y\in \R^d}$, $\widetilde{T}_y:\widetilde\Omega \to \widetilde\Omega$ that satisfies the following properties: 
	\begin{enumerate}[(a)]
	\item $\widetilde{T}_{y_1} \circ \widetilde{T}_{y_2}=\widetilde{T}_{y_1+y_2}\ $ for all $x,y \in \R^d$;
	\item $\widetilde{P}(\widetilde{T}_{y} F )=\widetilde{P}(F)\ $ for all $y \in \R^d$, $F \in \widetilde{\mathcal{F}}$;
	\item the map $\mathcal{\widetilde{T}}: \R^d \times\widetilde\Omega \to \widetilde\Omega,\ $ $(y, \widetilde\omega)\to \widetilde{T}_y (\widetilde \omega)$ is measurable with respect to the standard $\sigma$-algebra on the product space induced by $\widetilde{\mathcal{F}}$ and the Borel $\sigma$-algebra on $\R^d$.
\end{enumerate}
One then fixes a `model inclusion' $\widetilde{\mathcal{O}} \subset \widetilde\Omega$ (a given subset of the abstract probability space), and
to each $\widetilde{\omega} \in \widetilde{\Omega}$ one associates the set of inclusions  $\widetilde{\mathcal{O}}_{\widetilde{\omega}}$ (a subset of $\R^d$) defined as 
$$ \widetilde{\mathcal{O}}_{\widetilde{\omega}}:= \{x \in \R^d: \widetilde{T}_x \widetilde{\omega} \in \widetilde{\mathcal{O}}\}, $$
with the assumption that for a.e. $\widetilde{\omega} \in \widetilde{\Omega}$ the set $\widetilde{\mathcal{O}}_{\widetilde{\omega}}$ satisfies Assumption~\ref{main assumption}.  
 
It is easy to see that every such abstract framework can be realised as the more `concrete' probability space $(\Omega,\mathcal{F},P)$ set out above as follows. 
We identify $\widetilde{\omega} \in \widetilde{\Omega}$ with its realisation $\widetilde{\O}_{\widetilde\omega}$ and define 
\[
\Omega:=\{\omega:=\widetilde{\O}_{\widetilde\omega} \ |\ \widetilde{\omega}\in \widetilde{\Omega}\}\,.
\]
Next, with $\mathcal{F}$ being the $\sigma$-algebra on $\Omega$ introduced earlier in this section, on $(\Omega,\mathcal{F})$ we define the probability measure 
$$ P(F):=\widetilde{P} (\widetilde{F}),  $$
where 
$$\widetilde{F}=\{\widetilde{\omega} \in\widetilde{\Omega}: \omega \in F \}. $$
This is simply the push-forward of the probability measure $\widetilde{P}$ by the mapping $\widetilde{\omega} \mapsto \omega$, which can be easily seen to be measurable.

Note that the counterpart of $\widetilde\O$ in $(\Omega,\mathcal{F},P)$ is
\[
\O:=\pi_0^{-1}(1),
\]
that is, the set of random collections of inclusions containing the origin. Recall that the map $\pi_0$ is defined in accordance with \eqref{map pi}.
\end{remark}

\color{black}

Given a self-adjoint operator $\mathcal{A}$,  we denote its spectrum by $\sigma(\mathcal{A})$. Furthermore, we denote by $\sigma_d(\mathcal A)$ its discrete spectrum, i.e.~isolated eigenvalues of finite multiplicity,   by $\sigma_\mathrm{ess}(\mathcal A):=\sigma(\mathcal{A})\setminus \sigma_d(\mathcal{A})$ its essential spectrum, and by $\sigma_p(\mathcal{A})$ its point spectrum. 

We denote by $-\Delta_{\textcolor{black}\Omega}$ the positive definite self-adjoint operator \textcolor{black}{on $L^2_0(\Omega)$} associated with the bilinear form
\color{black}
\begin{equation*}
\label{definition of probabilistic dirichlet laplacian}
\E\left[ \nabla u \cdot \nabla v\right], \qquad \overline{u},\overline{v}\in H^1_0(\Omega).
\end{equation*}
For $\lambda\not \in \sigma(-\Delta_{\Omega})$ let $\overline{b}_\lambda\in H^1_0(\Omega)$ be the solution to
$
(-\Delta_\Omega - \lambda) \overline{b}_\lambda =1,
$
which in weak formulation reads
\begin{equation}
\label{definition blambda 1}
\E\left[\nabla b_\lambda \cdot \nabla v-\lambda\,b_\lambda v  \right]=\E[v] \qquad \text{for all}\quad \overline{v}\in H^{1}_0(\Omega).
\end{equation}
Observe that in physical space we have
\begin{equation}
\label{definition blambda 2}
b_\lambda(y,\omega)=\sum_k b_\lambda^k(y,\omega)
\end{equation}
where $b_\lambda^k$ is the unique solution in $H^1_0(\omega^k)$ of $(-\Delta - \lambda) b_\lambda^k =1$, extended by zero in $\R^d\setminus \omega^k$. The following  spectral decomposition holds:
\begin{equation}\label{spectral decomp}
	b_\lambda^k(\cdot,\omega) = \sum_j \frac{\int_{\omega^k} \phi_j^k}{\nu_j^k - \lambda}\phi_j^k,
\end{equation}
where $\nu_j^k$ and $\phi_j^k$ 	are the eigenvalues (repeated according to their multiplicity) and orthonormalised eigenfunctions of $-\Delta$ with the Dirichlet boundary condition on $\omega^k$.

\color{black}

\begin{definition}
\label{definition beta function}
We define the function $\beta:\R\setminus \sigma(-\Delta_{\color{black}\Omega})\to \R$  by setting
\begin{equation}
\label{definition beta function equation 1}
\beta(\lambda):=\lambda+\lambda^2 \,\textcolor{black}{\E[\overline{b}_\lambda]},
\end{equation}
or, equivalently,
\begin{equation*}
\label{definition beta function equation 2}
\beta(\lambda):=\lambda+\lambda^2 \,\textcolor{black}{\E\left[(-\Delta_\Omega-\lambda)^{-1}\,\overline{1_\Omega}\right]},
\end{equation*}
where \color{black} $1_\Omega$ is defined by \eqref{definition 1 Omega}.\color{black}
\end{definition}
This function  is the direct analogue of $\beta(\lambda)$ introduced in \cite{zhikov2000} in the periodic setting, and we will refer to it as \emph{Zhikov's $\beta$-function}.

The spectrum of the unperturbed {\color{black} limiting operator} can be characterised purely in terms of $\beta(\lambda)$ and $\sigma(-\Delta_{\textcolor{black}{\Omega}})$.

\begin{theorem}[Spectrum of $\hat\A^\mathrm{hom}$ \cite{CCV1,CCV2}]
\label{theorem spectrum of A hom}

\

\begin{enumerate}[(a)]
\item 
The spectrum of $-\Delta_{\textcolor{black}{\Omega}}$ is positive, detached from $0$, and it is given by
\begin{equation*}
\label{theorem spectrum of A hom equation 1}
\sigma(-\Delta_{\textcolor{black}{\Omega}})=\overline{\bigcup_{k\in\N}\sigma(-\Delta_{\textcolor{black}{\omega^k}})} \qquad \text{for a.e. }\omega,
\end{equation*}
\textcolor{black}{where $\Delta_{\omega^k}$ is the Laplacian on $\omega^k$ with Dirichlet boundary condition}.

\item
The spectrum of $\hat\A^\mathrm{hom}$ is given by
\begin{equation*}
\label{theorem spectrum of A hom equation 2}
\sigma(\hat\A^\mathrm{hom})= \sigma(-\Delta_{\textcolor{black}{\Omega}}) \cup \overline{\{\lambda\,|\, \beta(\lambda)\ge 0\}}.
\end{equation*}

\item
The point spectrum of $\hat \A^\mathrm{hom}$ coincides with the set of eigenvalues of $-\Delta_{\textcolor{black}{\Omega}}$ whose eigenfunctions have zero mean,
\begin{equation*}
\label{theorem spectrum of A hom equation 3}
\sigma_p(\hat\A^\mathrm{hom})=\{\lambda\in \sigma_p(-\Delta_{\textcolor{black}{\Omega}})\,|\,\exists\, \textcolor{black}{\overline{f}}\in \textcolor{black}{H^1_0}(\textcolor{black}{\Omega})\ : \ -\Delta_{\textcolor{black}{\Omega}} \textcolor{black}{\overline{f}}=\lambda \textcolor{black}{\overline{f}}, \ \textcolor{black}{\E[\overline{f}]}=0\}.
\end{equation*}
\end{enumerate}
\end{theorem}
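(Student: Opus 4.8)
The plan is to reduce the spectral analysis of $\hat\A^\mathrm{hom}$ to a scalar effective problem on $\R^d$ governed by Zhikov's function $\beta$, as in the periodic high-contrast theory \cite{zhikov2000,cherdantsev}, the stochastic ingredients being supplied by the Ergodic Theorem. For part~(a), I would first prove a Poincar\'e inequality on the inclusions that is uniform in the component index $k$ and in $\omega$: since each $\O^k_\omega$ has diameter $<1/2$ and $\B^k_\omega\setminus\overline{\O^k_\omega}$ is uniformly minimally smooth, the bounded extension operator of Theorem~\ref{extension theorem} furnishes such a constant, hence a uniform lower bound for all $-\Delta_{\O^k_\omega}$ and thus for $-\Delta_\O$, giving positivity and detachment from $0$. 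For the identity $\sigma(-\Delta_\O)=\overline{\bigcup_k\sigma(-\Delta_{\O^k_\omega})}$, I would note that $\O_{T_y\omega}=\O_\omega-y$, so that by translation invariance of the Dirichlet Laplacian the set on the right is a $T$-invariant function of $\omega$, hence a.s.\ constant by ergodicity; the inclusion ``$\supseteq$'' then follows by transplanting quasi-eigenfunctions of the individual $-\Delta_{\O^k_\omega}$ into $W^{1,2}_0(\O)$ via the Ergodic Theorem, and ``$\subseteq$'' by assembling the resolvent of $-\Delta_\O$ from the uniformly bounded component resolvents whenever $\lambda$ lies off that set.

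For part~(b), fix $\lambda\notin\sigma(-\Delta_\O)$ and solve $(\hat\A^\mathrm{hom}-\lambda)(u_0+u_1)=f$ by a two-scale reduction. Testing the weak formulation against $\varphi_1\in L^2(\R^d;W^{1,2}_0(\O))$ decouples, for a.e.\ $x$, the microscopic equation $(-\Delta_\O-\lambda)u_1(x,\cdot)=\lambda u_0(x)\chi_\O+\chi_\O f(x,\cdot)$ on $\O$, which is solvable since $\lambda\notin\sigma(-\Delta_\O)$; writing it through $b_\lambda=(-\Delta_\O-\lambda)^{-1}\chi_\O$ gives $u_1(x,\cdot)=\lambda u_0(x)\,b_\lambda+(-\Delta_\O-\lambda)^{-1}(\chi_\O f(x,\cdot))$. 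Testing against $\varphi_0\in H^1(\R^d)$ and substituting $\langle u_1(x,\cdot)\rangle_\Omega$ then yields, since $\beta(\lambda)=\lambda+\lambda^2\langle b_\lambda\rangle_\Omega$, the effective equation $-\dive(A_1^\mathrm{hom}\nabla u_0)-\beta(\lambda)u_0=\hat f$ on $\R^d$. As $\sigma(-\dive(A_1^\mathrm{hom}\nabla))=[0,\infty)$, this is solvable for arbitrary data precisely when $\beta(\lambda)<0$, and since the reduction is bijective, $\lambda$ belongs to the resolvent set of $\hat\A^\mathrm{hom}$ iff $\lambda\notin\sigma(-\Delta_\O)$ and $\beta(\lambda)<0$. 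Adjoining the inclusion $\sigma(-\Delta_\O)\subseteq\sigma(\hat\A^\mathrm{hom})$ — obtained via Weyl sequences built from quasi-eigenfunctions of $-\Delta_\O$ (correcting by a macroscopic profile, or invoking a pole of $\beta$, when the mean is nonzero) — and passing to closures gives $\sigma(\hat\A^\mathrm{hom})=\sigma(-\Delta_\O)\cup\overline{\{\lambda:\beta(\lambda)\ge0\}}$.

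For part~(c), I would run the same reduction with $f=0$. If $\lambda\notin\sigma(-\Delta_\O)$, an eigenfunction $u_0+u_1$ would force $u_0$ to solve $-\dive(A_1^\mathrm{hom}\nabla u_0)=\beta(\lambda)u_0$ in $L^2(\R^d)$, which has no nontrivial solution, whence $u_0=0$ and then $u_1=0$; so $\sigma_p(\hat\A^\mathrm{hom})\subseteq\sigma(-\Delta_\O)$. For $\lambda\in\sigma(-\Delta_\O)$, the Fredholm solvability condition for the microscopic equation, together with the absence of $L^2$-eigenfunctions for the constant-coefficient macroscopic operator, again forces $u_0\equiv0$; consequently $u_1(x,\cdot)\in\ker(-\Delta_\O-\lambda)$ for a.e.\ $x$, so $\lambda\in\sigma_p(-\Delta_\O)$, and the macroscopic equation now reads $\lambda\langle u_1(x,\cdot)\rangle_\Omega=0$, i.e.\ the fibre $u_1(x,\cdot)$ is a mean-zero eigenfunction of $-\Delta_\O$. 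Conversely, if $-\Delta_\O f=\lambda f$ with $f\in W^{1,2}_0(\O)$ and $\langle f\rangle_\Omega=0$, then $u(x,\omega):=g(x)f(\omega)$, with any $0\neq g\in L^2(\R^d)$, lies in $\hat H$ and is directly verified to be an eigenfunction of $\hat\A^\mathrm{hom}$ at $\lambda$, using $\int_\Omega\nabla_\omega f\cdot\nabla_\omega\psi=\lambda\int_\Omega f\psi$ and $\langle f\rangle_\Omega=0$.

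The main obstacle I anticipate is the structural identity in part~(a): unlike in the periodic setting, $-\Delta_\O$ is an operator on the abstract probability space, and relating its spectrum to the geometry of the physical realisations $\O_\omega$ uses ergodicity in an essential way — both to ensure that every component ``shape'' is realised almost surely and to produce the uniform bounds required to reassemble resolvents. Once part~(a) is available, parts~(b) and~(c) are the stochastic transcription of the periodic computation, the only real bookkeeping being the non-orthogonality of the splitting $u=u_0+u_1$ in the inner product of $\hat H$.
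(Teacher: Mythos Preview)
The paper does not prove this theorem: it is quoted as background from \cite{CCV1,CCV2} (the attribution is in the theorem heading itself), and no argument is supplied in the present work. Your sketch follows the Zhikov-type two-scale reduction that those references carry out --- the stochastic transcription of \cite{zhikov2000,cherdantsev} --- so there is no in-paper proof to compare against, and your outline is consistent with the approach of the cited sources.

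Two small remarks on the sketch itself. First, in part~(a) the uniform Poincar\'e constant on $W^{1,2}_0(\O^k_\omega)$ follows directly from the uniform diameter bound $\operatorname{diam}\O^k_\omega<\tfrac12$ (zero boundary data), without invoking the extension theorem. Second, in part~(c) your handling of the case $\lambda\in\sigma(-\Delta_\O)$ invokes a ``Fredholm solvability condition'' to force $u_0\equiv0$, but $-\Delta_\O$ need not have pure point spectrum, so the Fredholm alternative is not available in general; the clean way is to observe that if the microscopic equation $(-\Delta_\O-\lambda)u_1(x,\cdot)=\lambda u_0(x)\chi_\O$ is solvable then substitution into the macroscopic equation still yields a constant-coefficient elliptic eigenvalue problem on $\R^d$ (possibly with an inhomogeneity coming from the kernel component of $u_1$), and one argues from there. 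This is a bookkeeping point rather than a genuine gap.
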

Note that $\sigma_{\rm ess}(\hat\A^\mathrm{hom})=\sigma(\hat\A^\mathrm{hom})$. \textcolor{black}{This is due to the fact that the point spectrum of $\hat\A^\mathrm{hom}$ always has   infinite multiplicity, cf. the proof of \cite[Proposition 4.13]{CCV2}.}

In the case when $\R^d$ is replaced by a bounded domain $S\subset \R^d$ with Lipschitz boundary, it was shown in \cite{CCV1} that the spectrum of $\hat\A^\epsilon$ converges in the sense of Hausdorff to the spectrum of $\hat\A^\mathrm{hom}$. However, in the whole space setting this is not the case. The set of limit points of the spectrum of $\hat\A^\epsilon$ as $\epsilon\to0$ (which will be referred to as \emph{limiting spectrum}) is, in general, strictly larger that the spectrum of $\hat\A^\mathrm{hom}$, see~\cite[Section~5]{CCV2}.

In order to characterise the limiting spectrum of $\hat\A^\epsilon$ one needs a ``global'' analogue of Zhikov's $\beta$-function, which, loosely speaking, knows about the distribution of inclusions around each point in $\R^d$. To this end for $\lambda\not \in \sigma(-\Delta_{\textcolor{black}{\Omega}})$ we introduce the quantity
\begin{equation}
\label{definition function j}
\ell_{\lambda,L}(x,\omega):=\frac{1}{L^d}\int_{\square_x^L} \left(\lambda+\lambda^2 b_\lambda(y,\omega) \right)\,dy,
\end{equation}
where $L>0$. \textcolor{black}{Recall that $\square_x^L$ is defined by \eqref{definition box centred at x}.}

\begin{definition}
\label{definition local beta function}
For $\lambda\not \in \sigma(-\Delta_{\textcolor{black}{\Omega}})$ we define 
\begin{equation*}
\label{definition beta infinity}
\beta_\infty(\lambda,\omega):=\liminf_{L\to +\infty} \sup_{x\in \R^d} \ell_{\lambda,L}(x,\omega)
\end{equation*}
\end{definition}

One can show that the function $(\lambda,\omega)\mapsto \beta_\infty(\lambda,\omega)$ is deterministic almost surely \cite[Proposition~\textcolor{black}{5.11}]{CCV2},  i.e.  $\beta_\infty(\lambda,\omega)=:\beta_\infty(\lambda)$  \textcolor{black}{almost surely}. 

\begin{theorem}[Limiting spectrum {\cite[\textcolor{black}{Theorem~5.2}]{CCV2}}]\label{th1.13}
The limiting spectrum of the family of operators $\hat \A^\epsilon$ is a subset of
\begin{equation}
\label{definition g theta not}
\mathcal{G}:=\sigma(-\Delta_{\textcolor{black}{\Omega}}) \cup \overline{\{\ \lambda\in \R\,|\,\beta_\infty(\lambda)\ge 0 \}}
\end{equation}
almost surely.
Namely, for every sequence of elements $\lambda_\epsilon\in \sigma(\hat\A^\epsilon)$ such that $\lim_{\epsilon\to 0}\lambda_\epsilon=\lambda_0$ we have $\lambda_0 \in \mathcal{G}$.
\end{theorem}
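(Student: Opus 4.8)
The plan is to argue by contraposition: assuming $\lambda_0\notin\mathcal{G}$, I will show that $\dist(\lambda_0,\sigma(\hat\A^\epsilon))$ stays bounded away from $0$ for all sufficiently small $\epsilon$, which precludes $\lambda_0$ from being a limit of elements of $\sigma(\hat\A^\epsilon)$. Fix a typical $\omega$, so that Birkhoff's theorem (Theorem~\ref{ergodic theorem}) and the deterministic description of $\beta_\infty$ are available. Since $\mathcal{G}$ is closed, $\lambda_0\notin\mathcal{G}$ gives a neighbourhood of $\lambda_0$ disjoint both from $\sigma(-\Delta_\O)$ and from $\overline{\{\beta_\infty\ge0\}}$; in particular $\beta_\infty(\lambda_0)<0$, so — using that $\beta_\infty$ is a $\liminf$ — I would \emph{fix once and for all} a large $L>0$ with $\sup_{x\in\R^d}\ell_{\lambda_0,L}(x,\omega)\le-2c_0<0$. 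For this fixed $L$ the map $\lambda\mapsto\ell_{\lambda,L}(x,\omega)$ is Lipschitz uniformly in $x$ (the cell functions $b_\lambda$ are uniformly bounded on the inclusions $\O_\omega^k$ and depend continuously on $\lambda$ off $\sigma(-\Delta_\O)$, uniformly in $k$ by minimal smoothness and the spectral gap from Theorem~\ref{theorem spectrum of A hom}(a)), hence $\sup_x\ell_{\lambda,L}(x,\omega)\le-c_0$ for all $\lambda$ in a smaller neighbourhood of $\lambda_0$, and so for $\lambda_\epsilon$ once $\epsilon$ is small, where $\lambda_\epsilon\in\sigma(\hat\A^\epsilon)$ is the hypothetical sequence with $\lambda_\epsilon\to\lambda_0$.

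The second step sets up a Weyl-sequence argument for the (assumed) spectral point $\lambda_\epsilon$: by self-adjointness and a diagonal extraction there is $u^\epsilon\in H^1(\R^d)$ with $\|u^\epsilon\|_{L^2(\R^d)}=1$ and $f^\epsilon:=(\hat\A^\epsilon-\lambda_\epsilon)u^\epsilon\to0$ in $L^2(\R^d)$, which I want to contradict. Testing the equation with $u^\epsilon$ yields $\|\nabla u^\epsilon\|_{L^2(\R^d\setminus\overline{\epsilon\O_\omega})}\le C$ and $\epsilon\|\nabla u^\epsilon\|_{L^2(\epsilon\O_\omega)}\le C$; extending $u^\epsilon$ from the matrix $\R^d\setminus\overline{\epsilon\O_\omega}$ across each scaled inclusion by the $\epsilon$-rescaled version of Theorem~\ref{extension theorem}, applied to the collars $\epsilon\B_\omega^k\setminus\epsilon\O_\omega^k$ (finite overlap), produces $\widetilde u^\epsilon$ with $\|\nabla\widetilde u^\epsilon\|_{L^2(\R^d)}\le C$ which agrees with $u^\epsilon$ on the matrix. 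On $\epsilon\O_\omega^k$, written in the fast variable, the equation reads $(-\Delta_{\O_\omega^k}-\lambda_\epsilon)\hat u=\epsilon^2\hat f^\epsilon$ with Dirichlet datum the trace of $\widetilde u^\epsilon$; since $\sigma(-\Delta_{\O_\omega^k})\subset\sigma(-\Delta_\O)$ is bounded away from $\lambda_\epsilon$, this is uniquely solvable with a resolvent bound uniform in $k$, and at leading order $\hat u=\bar u_k\,(1+\lambda_\epsilon b_{\lambda_\epsilon})$, where $\bar u_k$ is a local average of $\widetilde u^\epsilon$ near the inclusion and $b_{\lambda_\epsilon}$ solves $-\Delta_{\O_\omega^k}b-\lambda_\epsilon b=1$.

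The heart of the matter is an additive decomposition of the quadratic form $\langle(\hat\A^\epsilon-\lambda_\epsilon)u^\epsilon,u^\epsilon\rangle$ over the tiling of $\R^d$ by the \emph{disjoint} cubes $\square_{\epsilon L j}^{\epsilon L}$, $j\in\Z^d$ (side $\epsilon L$, with the \emph{same} fixed $L$) — crucially, because the cubes are genuinely disjoint rather than a partition of unity, no IMS-type commutator ever appears. Inserting the leading-order profile $\hat u\approx\bar u_k(1+\lambda_\epsilon b_{\lambda_\epsilon})$ and using Green's identity for $b_\lambda$ (which gives that the energy of a single inclusion is $-|\bar u_k|^2\int_{\epsilon\O_\omega^k}(\lambda_\epsilon+\lambda_\epsilon^2 b_{\lambda_\epsilon}(\cdot/\epsilon))$), the total contribution of the inclusions in a cube $\square$ centred at $x_0$, \emph{plus} the matrix term $-\lambda_\epsilon\int_{\square\setminus\overline{\epsilon\O_\omega}}|u^\epsilon|^2$, collapses — the $\lambda_\epsilon$ carried by the matrix cancelling exactly the $\lambda_\epsilon$ hidden inside the inclusion integrals — to $-(\epsilon L)^d\,\ell_{\lambda_\epsilon,L}(x_0/\epsilon,\omega)\,|\bar u_\square|^2$, where $\bar u_\square$ is the average of $\widetilde u^\epsilon$ over $\square$. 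By the choice of $L$ this is $\ge c_0|\bar u_\square|^2|\square|$, while $\|u^\epsilon\|^2_{L^2(\square)}\le C_{\lambda_0}|\bar u_\square|^2|\square|$ because $|1+\lambda b_\lambda|^2$ is bounded. Discarding the non-negative term $\int_{\R^d\setminus\overline{\epsilon\O_\omega}}A_1\nabla u^\epsilon\cdot\nabla u^\epsilon$ and summing over $j$ then yields $\langle f^\epsilon,u^\epsilon\rangle=\langle(\hat\A^\epsilon-\lambda_\epsilon)u^\epsilon,u^\epsilon\rangle\ge(c_0/C_{\lambda_0})-R_\epsilon$, where $R_\epsilon$ collects the error terms — the $\epsilon^2\hat f^\epsilon$ correction on inclusions ($O(\epsilon^4)\|f^\epsilon\|^2$), the oscillation of the boundary traces and the replacement of $u^\epsilon$ by cube averages (each $O((\epsilon L)^2)\|\nabla\widetilde u^\epsilon\|_{L^2(\R^d)}^2$), and the $O(1/L)$ correction from inclusions straddling cube faces — all of which are $o(1)$ as $\epsilon\to0$ for $L$ fixed. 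Since also $\langle f^\epsilon,u^\epsilon\rangle\to0$, this forces $0\ge c_0/C_{\lambda_0}>0$, the desired contradiction.

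I expect the main obstacle to be ensuring that everything is controlled \emph{uniformly in $x\in\R^d$}: on the unbounded domain there is no compact Sobolev embedding, so one cannot merely pass to a two-scale limit and read off $\sigma(\hat\A^\mathrm{hom})$ — spectrum can be produced ``at spatial infinity'' by regions locally richer in inclusions than the ergodic mean, and it is precisely the spatial supremum defining $\beta_\infty$ that rules this out. Making this quantitative requires (i) getting the order of limits right — fixing $L$ large enough at the very start so that $\sup_x\ell_{\lambda_0,L}(\cdot,\omega)<0$, and only then letting $\epsilon\to0$, while verifying that the $L$-dependence of $R_\epsilon$ is harmless for $L$ fixed — and (ii) uniform-in-$k$ (and $\omega$-independent) versions of the cell-problem solvability, of the trace and Poincaré inequalities on $\epsilon\B_\omega^k\setminus\epsilon\O_\omega^k$, and of the summability of the per-inclusion errors; these are exactly what the uniform extension property (Theorem~\ref{extension theorem}) together with the uniform minimal smoothness of Assumption~\ref{main assumption} are there to supply.
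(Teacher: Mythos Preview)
The paper does not prove Theorem~\ref{th1.13}; it is stated as a background result imported verbatim from \cite{CCV2}, with no proof supplied here. So there is no ``paper's own proof'' to compare against. That said, your proposal is very much in the spirit of the machinery the present paper \emph{does} develop in Section~\ref{Uniform exponential decay of eigenfunctions}: the harmonic extension $\tilde u^\epsilon$ and the decomposition $u^\epsilon=\tilde u^\epsilon+u_0^\epsilon$, the representation $u_0^\epsilon\approx\lambda_\epsilon\,b_{\lambda_\epsilon}\langle\tilde u^\epsilon\rangle$ on each inclusion, the tiling of $\R^d$ by cubes $\epsilon\square_\xi^L$ with a single $L$ fixed by Lemma~\ref{lemma local beta function}, and the reduction of the per-cube energy to $-\ell_{\lambda_\epsilon,L}\,|\bar u_\square|^2|\square|$ --- all of this mirrors the computation \eqref{proof decay equation 43} carried out there (with $\psi_R\equiv1$). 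Your contrapositive packaging via a Weyl sequence is the natural way to turn that estimate into a statement about $\sigma(\hat\A^\epsilon)$, and is essentially how the result is obtained in \cite{CCV2}.

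One point to tighten: you list among the error terms ``the $O(1/L)$ correction from inclusions straddling cube faces'' and then assert that ``all of which are $o(1)$ as $\epsilon\to0$ for $L$ fixed''. The straddling error is \emph{not} $o(1)$ in $\epsilon$; it is a genuine $O(1/L)$ constant. The fix is the one you already hint at (``fixing $L$ large enough at the very start''): Lemma~\ref{lemma local beta function} (which ultimately rests on \cite[Lemma~4.8]{CCV2}) gives the bound $\sup_x\ell_{\lambda_0,L}\le\beta_\infty(\lambda_0)+\delta$ for \emph{all} sufficiently large $L$, not just one, so you may first take $L$ large enough that the $O(1/L)$ residue is dominated by $c_0$, and only then send $\epsilon\to0$. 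With that order of limits made explicit, your argument goes through.
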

It is not difficult to see using the Ergodic Theorem that 
\begin{equation*}
	\beta(\lambda)=\lim_{L\to +\infty} \ell_{\lambda,L}(0,\omega)
\end{equation*}
almost surely. Therefore $\beta(\lambda)\le \beta_\infty(\lambda)$, and one has the following inclusions:
\begin{equation*}
\sigma(\hat{\mathcal A}^{\rm hom} )\subset  \lim_{\e\to 0} \sigma(\hat\A^\epsilon) \subset \mathcal{G}.
\end{equation*}

Under an additional assumption of that the range of correlation of the distribution of inclusions in the physical space is finite, see \cite[\textcolor{black}{Assumption~5.4}]{CCV2} for a precise formulation, the  equality 
\begin{equation}\label{1.11}
	\lim_{\e\to 0} \sigma(\hat\A^\epsilon) = \mathcal{G}
\end{equation}  
holds almost surely.

The operators $\A^\epsilon$ and $\A^\mathrm{hom}$ are a perturbation of $\hat{\A}^\epsilon$ and $\hat{\A}^\mathrm{hom}$,  respectively,  in that their coefficients differ on a relatively compact subset of $\R^d$, the defect $\textcolor{black}{\mathcal{D}}$.  Even though this substantially modifies the domains of our operators, their essential spectra are stable under the introduction of a defect.  This is formalised by the following
\begin{theorem}
We have
	\begin{enumerate}[(i)]
\item $\sigma_\mathrm{ess}(\A^\epsilon)=\sigma_\mathrm{ess}(\widehat{\A}^\epsilon)$ \textcolor{black}{for every $\epsilon$} and

\item $\sigma_\mathrm{ess}(\A_\mathrm{hom})=\sigma_\mathrm{ess}(\widehat{\A}_\mathrm{hom})$.
\end{enumerate}

\end{theorem}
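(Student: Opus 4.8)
The plan is to deduce both equalities from Weyl's theorem on the stability of the essential spectrum under compact perturbations of the resolvent; for (i) the two operators act in the same space, while for (ii) one first has to account for the fact that $\A^\mathrm{hom}$ and $\hat\A^\mathrm{hom}$ act in different Hilbert spaces.

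\emph{Proof of (i).} The operators $\A^\epsilon$ and $\hat\A^\epsilon$ act in $L^2(\R^d)$ with the common form domain $H^1(\R^d)$, and their coefficient matrices coincide outside the bounded set $K:=\{x\in\R^d : \dist(x,\overline{S_2})\le 1/2\}$ --- indeed $A^\epsilon$ and $\hat A^\epsilon$ may differ only on $S_2$ and on the inclusions meeting $\overline{S_2}$, the latter having diameter $<\epsilon/2<1/2$. It therefore suffices to show that $R_\epsilon:=(\A^\epsilon+1)^{-1}$ and $\hat R_\epsilon:=(\hat\A^\epsilon+1)^{-1}$ differ by a compact operator. Subtracting the weak formulations of $(\A^\epsilon+1)u=f$ and $(\hat\A^\epsilon+1)w=g$ --- using the latter with test function $u=R_\epsilon f$ and the former with test function $w=\hat R_\epsilon g$ --- gives
\[
\langle (R_\epsilon-\hat R_\epsilon)f,g\rangle_{L^2(\R^d)}=\int_{K}\bigl(\hat A^\epsilon-A^\epsilon\bigr)\nabla(R_\epsilon f)\cdot\nabla(\hat R_\epsilon g).
\]
Since for fixed $\epsilon$ the coefficient matrix is uniformly elliptic and bounded, $R_\epsilon$ and $\hat R_\epsilon$ are bounded from $L^2(\R^d)$ to $H^1(\R^d)$, so $\sup_{\|g\|\le1}\|\nabla(\hat R_\epsilon g)\|_{L^2(K)}<\infty$ and it remains to check that $f_n\rightharpoonup0$ in $L^2(\R^d)$ implies $\nabla(R_\epsilon f_n)\to0$ in $L^2(K)$. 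Setting $u_n:=R_\epsilon f_n$ one has $u_n\rightharpoonup0$ in $L^2(\R^d)$ and $\{u_n\}$ bounded in $H^1(\R^d)$, hence $u_n\to0$ in $L^2_\mathrm{loc}(\R^d)$ by Rellich--Kondrachov; testing the equation for $u_n$ with $\phi^2u_n$, where $\phi$ is a smooth compactly supported cut-off equal to $1$ on $K$, one bounds $\int\phi^2A^\epsilon\nabla u_n\cdot\nabla u_n$ by $o(1)$, and ellipticity gives $\|\nabla u_n\|_{L^2(K)}\to0$. Weyl's theorem then yields $\sigma_\mathrm{ess}(\A^\epsilon)=\sigma_\mathrm{ess}(\hat\A^\epsilon)$.

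\emph{Proof of (ii).} The argument mirrors (i). Since $H$ is a proper closed subspace of $\hat H$ with infinite-dimensional orthogonal complement $H^\perp$ supported over $S_2$, I would compare on $\hat H$ the operators $\hat R:=(\hat\A^\mathrm{hom}+1)^{-1}$ and $\iota R_0\iota^*$, where $R_0:=(\A^\mathrm{hom}+1)^{-1}$ and $\iota\colon H\hookrightarrow\hat H$ is the inclusion. As $\iota R_0\iota^*$ annihilates the infinite-dimensional space $H^\perp$ we have $0\in\sigma_\mathrm{ess}(\iota R_0\iota^*)$, and since $\sigma(\hat\A^\mathrm{hom})\supset\sigma(-\Delta_\O)$ is unbounded (Theorem~\ref{theorem spectrum of A hom}) also $0\in\sigma_\mathrm{ess}(\hat R)$; hence, once $\hat R-\iota R_0\iota^*$ is shown to be compact, Weyl's theorem together with the spectral mapping $\mu\mapsto(1+\mu)^{-1}$ (restricted to $[0,\infty)$, where both non-negative operators have their spectra) yields $\sigma_\mathrm{ess}(\A^\mathrm{hom})=\sigma_\mathrm{ess}(\hat\A^\mathrm{hom})$. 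To prove compactness of $\hat R-\iota R_0\iota^*$ one again writes the matrix element as an integral of the \emph{difference of the two forms}, which is supported over the bounded set $S_2$ and splits into a term quadratic in $\nabla u_0$ and a term quadratic in $\nabla_\omega u_1$; the former is controlled exactly as in (i) by a Caccioppoli estimate and Rellich--Kondrachov for the $u_0$-component.

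The main obstacle is the $\nabla_\omega u_1$ term: the $u_1$-fibre carries no $x$-derivative, so Rellich is unavailable. I expect to circumvent this by using two structural facts. First, every element of $H$ has $u_1$-component vanishing on $S_2$, so the dangerous contribution is confined to $S_2\times\Omega$. Second, by Theorem~\ref{theorem spectrum of A hom}(a) the spectrum of $-\Delta_\O$ is detached from $0$, which gives the Poincaré inequality $\|u_1(x,\cdot)\|_{L^2(\O)}\le c\,\|\nabla_\omega u_1(x,\cdot)\|_{L^2(\O)}$ for a.e.~$x$; testing the resolvent equation applied to a weakly null sequence with an $x$-cut-off times its $u_1$-component, and using this inequality to pass from control of $\int_{S_2\times\Omega}|\nabla_\omega u_1|^2$ to control of $\|u_1\|_{L^2(S_2\times\Omega)}$ --- the remaining $u_0$-cross-terms being $o(1)$ by Rellich --- should provide the strong convergence required to conclude. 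One should check en route that sharply truncating the $u_1$-component across $\partial S_2$ and multiplying by $x$-cut-offs remains within $H$, $V$ and $\hat V$, which is automatic since the norms of these spaces involve no $x$-derivative of the $u_1$-component.
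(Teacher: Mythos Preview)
Your proof of (i) is correct and, though packaged differently from the paper's argument in Appendix~\ref{appendix B}, rests on the same local ingredient. The paper works directly with Weyl sequences: given a singular sequence $\varphi_n$ for one operator, it uses the Caccioppoli inequality to obtain $\|\varphi_n\|_{H^1(B_R)}\to0$, cuts off near the defect, and pushes the result through the other resolvent to manufacture a singular sequence for the second operator. Your compact-resolvent-difference route is an equally valid alternative; both hinge on precisely the local gradient decay you establish.

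For (ii), however, your plan cannot succeed: the operator $\hat R-\iota R_0\iota^*$ is \emph{not} compact. Choose $h\in L^2(\Omega)$ with $h=0$ on $\Omega\setminus\O$, $\langle h\rangle_\Omega=0$ and $\langle(-\Delta_\O+1)^{-1}h\rangle_\Omega=0$ (two linear constraints on an infinite-dimensional space), and $g_n\in L^2(S_2)$ with $g_n\rightharpoonup0$, $\|g_n\|_{L^2}=1$. Then $f_n:=g_n\otimes h\in H^\perp$, so $\iota^*f_n=0$; but solving $(\hat\A^\mathrm{hom}+1)u=f_n$ one finds $u_0\equiv0$ (the macroscopic right-hand side vanishes by the two mean conditions and $1-\langle(-\Delta_\O+1)^{-1}\chi_\O\rangle_\Omega>0$) and $u_1=g_n\otimes(-\Delta_\O+1)^{-1}h$, whence $\|(\hat R-\iota R_0\iota^*)f_n\|=\|(-\Delta_\O+1)^{-1}h\|_{L^2(\Omega)}>0$ for all $n$. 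Your proposed Poincar\'e fix fails for the same reason: testing the resolvent equation with $\phi(x)u_1$ leaves on the right the term $(f_n,\phi u_1)=\int_{S_2}\phi g_n^2\cdot\langle h,(-\Delta_\O+1)^{-1}h\rangle_\Omega$, which does not tend to zero. The obstruction is structural: no $x$-derivative acts on $u_1$, so there is no local compactness in the microscopic fibre over $S_2$.

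The paper does not give a self-contained proof of (ii) but refers to \cite[Theorem~7.1]{cherdantsev}. The argument there proceeds via Weyl's criterion and the explicit description of $\sigma(\hat\A^\mathrm{hom})$ (Theorem~\ref{theorem spectrum of A hom}): for each $\lambda$ in the essential spectrum one builds singular sequences whose $x$-support is disjoint from $\overline{S_2}$ --- for $\lambda\in\sigma(-\Delta_\O)$ purely in the microscopic fibre, for $\lambda$ with $\beta(\lambda)\ge0$ from macroscopic quasimodes --- so that they serve simultaneously for $\A^\mathrm{hom}$ and $\hat\A^\mathrm{hom}$. This exploits translation invariance in $x$ rather than compactness, and is the idea your resolvent-difference strategy is missing.
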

%More precisely, we have:
%\begin{enumerate}[(i)]
%\item $\sigma_\mathrm{ess}(\A^\epsilon)=\sigma_\mathrm{ess}(\widehat{\A}^\epsilon)$,
%
%\item $\sigma_\mathrm{ess}(\A_\mathrm{hom})=\sigma_\mathrm{ess}(\widehat{\A}_\mathrm{hom})$.
%\end{enumerate} 
Property (i) is a well known classical result,  see, e.g.,  \cite[Theorem~1]{FK}.  For the sake of completeness, in Appendix~\ref{appendix B} we provide a direct self-contained proof of this fact  based on Weyl's criterion for the essential spectrum.  Property (ii) for the two-scale operators can be established by retracing the arguments presented in \cite[Theorem~7.1]{cherdantsev} for the periodic case,  upon making appropriate changes to the ``microscopic'' part of the {\color{black} limiting operator} in order to accommodate the stochastic setting.

Therefore, the presence of a defect only affects the discrete spectrum of the operators in question. In particular, if the spectra of $\hat\A^\epsilon$ and $\hat\A^\mathrm{hom}$ have gaps, eigenvalues in the gaps --- often known as \emph{defect modes} --- may appear as a result. 

It is natural to ask the following questions.

\

\textbf{Question 1.} Suppose we have an eigenvalue $\lambda_0\in \sigma_d(\A^\mathrm{hom})$ in a gap of $\mathcal{G}$, hence due to the defect. Is it true that there exist eigenvalues $\lambda_\epsilon\in \sigma_d(\A^\epsilon)$ such that $\lambda_\epsilon\to \lambda_0$ as $\epsilon\to 0$?

\

\textbf{Question 2.} Suppose we have a sequence of eigenvalues $\lambda_\epsilon\in \sigma_d(\A^\epsilon)$ converging to some $\lambda_0$ in a gap of $\mathcal{G}$ as $\epsilon\to 0$. Is it true that $\lambda_0\in \sigma_d(\A^\mathrm{hom})$?

\

\textbf{Question 3.} Suppose that the answer to Question 1 or 2 is \textcolor{black}{affirmative}. What can we say about the convergence of the corresponding eigenfunctions?

\

The main goal of this paper is to provide a rigorous answer to Questions 1, 2 and 3.

\begin{remark}\label{r1.15} Another natural question to ask is what if an eigenvalue of $\A^\mathrm{hom}$ is in $\mathcal G$, i.e. $\lambda_0\in \sigma_d(\A^\mathrm{hom})\cap \mathcal G$? Can one prove that there exists a sequence of localised modes $\lambda_\epsilon$ of $\A^\epsilon$ such that $\lambda_\epsilon\to \lambda_0$ as $\epsilon\to 0$? 	
	It was shown in \cite[\textcolor{black}{Proposition~5.11}]{CCV2} that $\beta_\infty(\l)$ is continuous and strictly increasing on every interval contained in $\R^+ \setminus \sigma(-\Delta_{\textcolor{black}{\Omega}})$. This implies that the part of the set $\mathcal G$ contained in the gaps of $\sigma(\widehat\A^\mathrm{hom})$ is a union of intervals of positive length (i.e.  it contains no isolated points).  Let us assume for simplicity that the equality \eqref{1.11} holds. Then  for  $\l_0 \in \mathcal{G}\setminus \sigma(\widehat\A^\mathrm{hom}) $ and $\delta >0$  one obviously has that 
	\begin{equation*}
	\label{remark embedded ev equation 1}
	\operatorname{dim}\operatorname{ran} \widehat E^\epsilon_{(\lambda_0-\delta,\lambda_0+\delta)} \to +\infty \quad \text{as}\quad \epsilon\to 0,
	\end{equation*}
	where $\widehat E^\epsilon_I$ denotes the spectral projection onto the interval $I\subset \R$ associated with the unperturbed operator $\widehat\A^\epsilon$.
Therefore, the usual strategy of seeking defect modes with standard tools of functional analysis utilised in the present work would no longer be useful  in the set $\mathcal{G}\setminus \sigma(\widehat\A^\mathrm{hom}) $. Indeed, such problem is related to or --- when $(\lambda_0-\delta,\lambda_0+\delta)\cap \sigma(\mathcal{A}^\e) = (\lambda_0-\delta,\lambda_0+\delta)$ ---  is the problem of embedded eigenvalues, whose analysis is  extremely challenging in general. For this reason,  we refrain from studying defect modes in $\mathcal{G}\setminus \sigma(\widehat{\A}^\mathrm{hom})$ here, with the plan to perform this delicate analysis elsewhere.
\end{remark}

\section{Main results}
\label{Main results}

In what follows we always assume that the set $\mathcal{G}$ defined in accordance with \eqref{definition g theta not} has gaps, namely, it does not coincide with the whole positive real line. \textcolor{black}{Note that this assumption does not yield an empty set of operators, as there are explicit examples --- e.g., the random parking model or randomly scaled inclusions  --- for which it is satisfied, see~\cite[\S~5.6]{CCV2}. More generally, consider a model with identical inclusions such that for a fixed (large enough) $L$ the number of inclusions contained in any cube $\square^L_x, x\in \R^d,$ is at least $1$. Then it is not difficult to see from the definition of $\beta_\infty(\l)$, taking into account the spectral decomposition \eqref{spectral decomp},  that $\mathcal{G}$ has infinitely many gaps.}

Our main results can be summarised in the form of three theorems stated in this section.

\begin{theorem}
\label{main theorem 1}
Let $\lambda_0 \in \R\setminus \mathcal{G}$ be an eigenvalue of $\A^\mathrm{hom}$.
Then almost surely
\begin{equation}
\label{main theorem 1 equation}
\lim_{\epsilon\to 0}\dist(\lambda_0, \sigma_d(\A^\epsilon))=0.
\end{equation}
\end{theorem}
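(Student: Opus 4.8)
The plan is to produce, for each small $\epsilon>0$, an approximate eigenfunction (a quasimode) for $\A^\epsilon$ with quasi-eigenvalue close to $\lambda_0$, and then invoke the spectral theorem together with the stability of the essential spectrum to conclude that genuine discrete eigenvalues of $\A^\epsilon$ must accumulate at $\lambda_0$. First I would take an eigenfunction $u=u_0+u_1\in V$ of $\A^\mathrm{hom}$ with eigenvalue $\lambda_0$; here $u_0$ is the macroscopic part (vanishing on $S_2$ in the appropriate sense as dictated by $V$) and $u_1(x,\omega)$ is the two-scale microscopic part supported where $\omega\in\O$. Since $\lambda_0\notin\mathcal G\supset\sigma(-\Delta_\O)$, the microscopic part $u_1$ can be reconstructed from $u_0$ through the resolvent $(-\Delta_\O-\lambda_0)^{-1}$ applied to (a multiple of) $\lambda_0 u_0$, exactly as in the standard high-contrast homogenisation ansatz. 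Because $\lambda_0\notin\mathcal G$, we also have a uniform gap: $\dist(\lambda_0,\mathcal G)>0$, which will give quantitative control on all the cell/fibre problems.

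Next I would build the quasimode $v^\epsilon\in H^1(\R^d)$ by the usual two-scale gluing: on the matrix $S_1^\epsilon$ set $v^\epsilon\approx u_0(x)$ (suitably corrected by an oscillating first-order term $\epsilon\, u_0'(x)\cdot N(T_{x/\epsilon}\omega)$ built from the potential corrector defining $A_1^\mathrm{hom}$), on each scaled inclusion $\epsilon\O^k_\omega$ set $v^\epsilon(x)\approx u_0(\epsilon D^k_\omega)\, b_{\lambda_0}$-type profile, i.e. the rescaled solution of the inclusion fibre problem $-\Delta b-\lambda_0 b = \text{const}$, and on the defect $S_2$ set $v^\epsilon\approx u_0|_{S_2}$ (which is where $u_0$ lives together with the macroscopic behaviour determined by $A_2$). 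The extension operators $E_k$ from Theorem 1.6 are used to reconcile the matrix and inclusion pieces into a single $H^1$ function, and a cutoff in the transition layers near $\partial(\epsilon\O^k_\omega)$ absorbs the mismatch. One then checks $\|v^\epsilon\|_{L^2(\R^d)}$ stays bounded below (this is where the normalisation of $u$ and the fact that $u_1$ genuinely contributes on a positive-density set of inclusions via the Ergodic Theorem, Theorem 1.9, enter), and estimates the residual
\[
\bigl\|(\A^\epsilon-\lambda_0)v^\epsilon\bigr\|_{H^{-1}} \quad\text{or, better,}\quad \sup_{\|\varphi\|_{H^1}\le 1}\Bigl| \int_{\R^d}A^\epsilon\nabla v^\epsilon\cdot\nabla\varphi - \lambda_0\int_{\R^d} v^\epsilon\varphi \Bigr| \;=\; o(1)
\]
as $\epsilon\to 0$, almost surely, using the Ergodic Theorem to replace spatial averages of the rapidly oscillating coefficients by probabilistic averages and to control the corrector terms. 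This shows $\dist(\lambda_0,\sigma(\A^\epsilon))\to 0$.

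Finally, to upgrade $\sigma(\A^\epsilon)$ to $\sigma_d(\A^\epsilon)$ I would use that $\lambda_0\notin\mathcal G$ and hence, by Theorem 1.12, $\lambda_0$ is at positive distance from the limiting spectrum of $\hat\A^\epsilon$; combined with $\sigma_\mathrm{ess}(\A^\epsilon)=\sigma_\mathrm{ess}(\hat\A^\epsilon)$ (Theorem 1.13(i)) and Theorem 1.11, for all sufficiently small $\epsilon$ a fixed neighbourhood of $\lambda_0$ contains no essential spectrum of $\A^\epsilon$. Therefore any spectrum of $\A^\epsilon$ in that neighbourhood is discrete, and the quasimode argument forces it to be nonempty for $\epsilon$ small, giving \eqref{main theorem 1 equation}.

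I expect the main obstacle to be the construction of the quasimode with a uniform (in $\epsilon$ and $\omega$) lower bound on its $L^2$ norm and an $o(1)$ residual, in the stochastic geometry: unlike the periodic case one cannot work on a single reference cell, so the inclusion profiles $b_{\lambda_0}(T_{\cdot/\epsilon}\omega)$ vary from inclusion to inclusion and must be patched using the non-uniform-shape extension operators $E_k$ while keeping all constants independent of $\omega$ and $k$ (this is exactly what Theorem 1.6 is designed to provide). Controlling the transition-layer errors and proving the $L^2$ lower bound — which requires knowing that the microscopic component of the homogenised eigenfunction is "seen" by a positive-density family of rescaled inclusions, via Birkhoff's theorem — is the delicate analytic core; the rest is a relatively standard min-max/Weyl-sequence argument.
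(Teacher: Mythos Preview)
Your overall strategy --- build a two-scale quasimode from the homogenised eigenfunction, then invoke the limiting-spectrum theorem together with stability of the essential spectrum to upgrade $\sigma(\A^\epsilon)$ to $\sigma_d(\A^\epsilon)$ --- is exactly the paper's, and the shape of the quasimode ($u_0$ on matrix and defect, $u_0(1+\lambda_0\, b_{\lambda_0}^\epsilon)$ on inclusions, plus a first-order corrector on the matrix) is correct. But you have misidentified the main obstacle and omitted the key analytic ingredient. The $L^2$ lower bound is trivial: since $v^\epsilon=u_0$ on $S_1^\epsilon\cup S_2$, one has $\|v^\epsilon\|_{L^2(\R^d)}\ge\|u_0\|_{L^2(S_2)}>0$, no ergodic argument needed. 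What is \emph{not} trivial is that $v^\epsilon\in L^2(\R^d)$ at all. The realisation $b_{\lambda_0}^\epsilon(x)=b_{\lambda_0}(T_{x/\epsilon}\omega)$ lies only in $L^2_{\rm loc}(\R^d)$, so $u_0\, b_{\lambda_0}^\epsilon$ is not automatically square-integrable on the whole space. The paper first proves, via an Agmon-type argument applied to the macroscopic equation (outside $S_2$ one has $-\dive(A_1^{\rm hom}\nabla u_0)=\beta(\lambda_0)u_0$ with $\beta(\lambda_0)<0$), that $|u_0(x)|+|\nabla u_0(x)|\le Ce^{-\theta|x|}$; this exponential decay then yields $\|u_0\, b_{\lambda_0}^\epsilon\|_{L^2(\R^d)}\le C$ uniformly in $\epsilon$ by a shell decomposition (Lemma~\ref{lemma ub}). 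Without this step your quasimode is not even an $L^2$ function. (Incidentally, it is $u_1$, not $u_0$, that vanishes on $S_2$; and the extension operators $E_k$ are not used to assemble the quasimode, which is already in $H^1$ since $b_{\lambda_0}\in W^{1,2}_0(\O)$ --- they enter only to extend the \emph{test function} harmonically across inclusions when estimating the residual.)

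A second gap: an $H^{-1}$ residual estimate does not directly give distance to the $L^2$-spectrum, because your quasimode is not in $D(\A^\epsilon)$. The paper bypasses this with a resolvent trick: set $\hat u^\epsilon:=(\lambda_0+1)(\A^\epsilon+1)^{-1}u^\epsilon\in D(\A^\epsilon)$ and prove $\|u^\epsilon-\hat u^\epsilon\|_{L^2}\to 0$ by bounding $|a^\epsilon(u^\epsilon_{LC}-\hat u^\epsilon,v)|\le \mathcal C(\epsilon,\rho,L)\sqrt{a^\epsilon(v,v)}$ for all $v\in H^1$ (note the \emph{energy} norm on the test function, not the $H^1$ norm --- essential because the coefficients degenerate like $\epsilon^2$ on the inclusions) and then taking $v=u^\epsilon_{LC}-\hat u^\epsilon$.
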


In order to state our second theorem, we need to recall the notion of two-scale convergence, adapted to the stochastic setting. 
%\textcolor{black}{In the following definition, we fix an $\omega_0\in\Omega$ for which the claim of the Ergodic Theorem~\ref{ergodic theorem} holds for every $\overline{f}\in L^1(\Omega)$. The set of such $\omega_0$'s has full measure in $\Omega$ as a consequence of the separability of $L^1(\Omega)$.}

\begin{definition}[Stochastic two-scale convergence \cite{ZP}]
\label{definition stochastic two scale convergence}
Let $\{u^\epsilon\}$ be a bounded sequence in $L^2(\R^d)$. We say that $\{u^\epsilon\}$ weakly stochastically two-scale converges to $\textcolor{black}{\overline{u}}\in L^2(\R^d\times \Omega)$ \textcolor{black}{(for a given $\omega_0\in \Omega$)} and write $u^\epsilon \overset{2}{\rightharpoonup} \textcolor{black}{\overline{u}}$ if
\begin{equation}
\label{definition stochastic two scale convergence equation 1}
\lim_{\epsilon\to 0}\int_{\R^d} u^\epsilon(x)\,f(x,x/\epsilon,\omega_0)\,dx= \textcolor{black}
{\E\left[\int_{\R^d} \overline{u}\overline{f} \right]}
\quad \forall \textcolor{black}{\overline{f}} \in C_0^\infty(\R^d)\otimes C^\infty(\Omega).
\end{equation}
We say that $\{u^\epsilon\}$ strongly stochastically two-scale converges to $\textcolor{black}{\overline{u}}\in L^2(\R^d\times \Omega)$ and write $u^\epsilon \overset{2}{\to} \textcolor{black}{\overline{u}}$ if it satisfies \eqref{definition stochastic two scale convergence equation 1} and 
\begin{equation*}
\label{definition stochastic two scale convergence equation 2}
\lim_{\epsilon\to0}\|u^\epsilon\|_{L^2(\R^d)} =\|\textcolor{black}{\overline{u}}\|_{L^2(\R^d\times \Omega)}.
\end{equation*}
\end{definition}

Some properties of stochastic two-scale convergence are provided in Appendix~\ref{Properties of two-scale convergence}.

The statement of the following theorem is true almost surely.

\begin{theorem}
\label{main theorem 2}
Let $\{\lambda_\epsilon\}$,
%\begin{equation*}
$
\lambda_\epsilon \in \sigma_d(\A^\epsilon) \cap \left(\R \setminus \mathcal{G} \right),
$
%\end{equation*}
be a sequence of eigenvalues of $\A^\epsilon$ in the gaps of the limiting spectrum such that
\begin{equation}\label{section exponential decay equation 1}
	\lim_{\epsilon\to 0}\lambda_\epsilon=\lambda_0 \not \in \mathcal{G}. 
\end{equation}
Denote by $\{u^\epsilon\}$ a sequence of corresponding normalised eigenfunctions,

\begin{equation}\label{section exponential decay equation 2}
	\A^\epsilon u^\epsilon=\lambda_\epsilon\, u^\epsilon, \qquad \|u^\epsilon\|_{L^2(\R^d)}=1.
\end{equation}

Then we have the following.
\begin{enumerate}[(a)]
\item
The eigenfunctions $u^\epsilon$ are uniformly exponentially decaying at infinity, namely,  for every
\begin{equation}\label{2.3}
0<\alpha<\sqrt{\frac{|\beta_\infty(\lambda_0)|}{\gamma}}
\end{equation}
there exists $\epsilon_0>0$ such that for every $0<\epsilon<\epsilon_0$
we have
\begin{equation*}
\|e^{\alpha\,|x|} u^\epsilon\|_{L^2(\R^d)}\le C,
\end{equation*}
where $C$ is a constant uniform in $\epsilon$, the quantity $\gamma:=\max_{\lambda\in \sigma(A_1)} \lambda$ \textcolor{black}{is the largest eigenvalue of the matrix $A_1$}, and $\beta_\infty$ is the function introduced in Definition~\ref{definition local beta function}.

\item
The limit $\lambda_0$ is an (isolated) eigenvalue of $\A^\mathrm{hom}$,
\[
\lambda_0 \in \sigma_d(\A^\mathrm{hom}).
\]
Furthermore, possibly up to extracting a subsequence, the sequence $\{u^\epsilon\}$ strongly stochastically two-scale converges to an eigenfunction $\textcolor{black}{\overline{u}}^0$ of $\A^\mathrm{hom}$ corresponding to the eigenvalue $\lambda_0$.
\end{enumerate}
\end{theorem}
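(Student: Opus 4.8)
We establish part (a) first and then deduce part (b) from it. Throughout we fix $\omega\in\Omega_t$ for which the almost-sure statements used below hold, and we write $\beta_*:=|\beta_\infty(\lambda_0)|$. Since $\lambda_0\notin\mathcal{G}$, formula \eqref{definition g theta not} gives $\lambda_0\notin\sigma(-\Delta_\O)$ and $\beta_\infty(\lambda_0)<0$, and, because $\beta_\infty$ is continuous and strictly increasing on each component of $\R^+\setminus\sigma(-\Delta_\O)$ (see \cite{CCV2} and Remark~\ref{r1.15}), there is a neighbourhood of $\lambda_0$ disjoint from $\mathcal{G}$; in particular $\beta_\infty(\lambda_\epsilon)\to-\beta_*$ as $\epsilon\to0$ and $\dist(\lambda_\epsilon,\sigma(-\Delta_\O))\ge\delta$ for some $\delta>0$ and all small $\epsilon$. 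By Theorem~\ref{th1.13} every limit point of $\bigcup_\epsilon\sigma(\hat\A^\epsilon)$ belongs to $\mathcal{G}$, so, after possibly shrinking $\delta$, also $\dist(\lambda_\epsilon,\sigma(\hat\A^\epsilon))\ge\delta$ for all small $\epsilon$. Two elementary facts will be used repeatedly. \emph{(i)} Testing \eqref{section exponential decay equation 2} against a Dirichlet eigenfunction of $-\Delta_{\epsilon\O_\omega^k}$, extended by zero, and using that its eigenvalue avoids $\lambda_\epsilon$, we see that $u^\epsilon$ restricted to each inclusion $\epsilon\O_\omega^k$ is $L^2$-orthogonal to $W^{1,2}_0(\epsilon\O_\omega^k)$, i.e.\ it coincides with the $(-\epsilon^2\Delta-\lambda_\epsilon)$-harmonic extension of its own trace; this is exactly what permits the elimination of the inclusion degrees of freedom through the uniformly bounded resolvent $(-\Delta_\O-\lambda_\epsilon)^{-1}$, giving rise to Zhikov's $\beta$-function. \emph{(ii)} Testing \eqref{section exponential decay equation 2} against $u^\epsilon$ and against $u^\epsilon$ multiplied by suitable cutoffs yields the uniform bounds $\|u^\epsilon\|_{H^1(S_1^\epsilon\cup S_2)}+\epsilon\,\|\nabla u^\epsilon\|_{L^2(S_0^\epsilon)}\le C$ and the Caccioppoli-type estimates needed below.

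\emph{Part (a).} We run a ground-state (Agmon) transform with a truncated weight. Choose $\rho_N\in C^\infty(\R^d)$ with $0\le\rho_N\nearrow|x|$, $|\nabla\rho_N|\le1$ and $\rho_N$ bounded, and set $\phi_N:=e^{\alpha\rho_N}$ and $w^\epsilon_N:=\phi_N u^\epsilon$. Testing \eqref{section exponential decay equation 2} against $\phi_N^2 u^\epsilon$ and rearranging gives
\begin{multline*}
\int_{\R^d}A^\epsilon\nabla w^\epsilon_N\cdot\nabla w^\epsilon_N-\lambda_\epsilon\int_{\R^d}|w^\epsilon_N|^2=\alpha^2\int_{\R^d}\bigl(A^\epsilon\nabla\rho_N\cdot\nabla\rho_N\bigr)|w^\epsilon_N|^2\\
\le\gamma\alpha^2\int_{S_1^\epsilon}|w^\epsilon_N|^2+\epsilon^2\alpha^2\int_{S_0^\epsilon}|w^\epsilon_N|^2+C_{S_2},
\end{multline*}
where $C_{S_2}$ bounds the contribution of the bounded defect since $\|u^\epsilon\|_{L^2(B_R)}\le1$. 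The crux is a uniform coercivity estimate: for every $\eta>0$ there are $R_\eta,\epsilon_\eta>0$ such that for $0<\epsilon<\epsilon_\eta$ and every $v=\psi u^\epsilon$ with $\psi$ Lipschitz, $\psi\equiv0$ on $B_{R_\eta}$ and $\psi$ slowly varying on the scale $\epsilon$ one has
\[
\int_{\R^d}A^\epsilon\nabla v\cdot\nabla v-\lambda_\epsilon\int_{\R^d}|v|^2\ \ge\ (\beta_*-\eta)\int_{\R^d}|v|^2.
\]
The plan for this is to localise the proof of Theorem~\ref{th1.13}: cover $\R^d\setminus B_{R_\eta}$ by cubes $\square_x^L$ of large fixed side $L$; in each cube, use fact (i) to remove the inclusion degrees of freedom, so that the form is, up to homogenisation errors $O(\epsilon)$ and an $L^2$-remainder controlled by the extension operator of Theorem~\ref{extension theorem}, bounded below by $\int A_1^\mathrm{hom}\nabla(\cdot)\cdot\nabla(\cdot)-\ell_{\lambda_\epsilon,L}(x,\omega)\int|\cdot|^2$; then use $A_1^\mathrm{hom}\ge0$ together with $\sup_{x}\ell_{\lambda_\epsilon,L}(x,\omega)\to\beta_\infty(\lambda_\epsilon)\to-\beta_*$ as $L\to\infty$ and $\epsilon\to0$. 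Applying this estimate with $v=(1-\eta_R)w^\epsilon_N$, where $\eta_R$ is a cutoff equal to $1$ on $B_{R_\eta}$, and absorbing the commutator terms from $\nabla\eta_R$ (supported in a fixed annulus, bounded by $\|u^\epsilon\|_{H^1(B_{2R_\eta})}\le C$), we combine with the identity above. Since $\alpha^2<\beta_*/\gamma$ we may fix $\eta$ with $\gamma\alpha^2<\beta_*-\eta$, and the combination then forces $\int_{\R^d}|w^\epsilon_N|^2\le C$ with $C$ independent of $N$ and of $\epsilon\in(0,\epsilon_0)$. Letting $N\to\infty$ and using monotone convergence gives $\|e^{\alpha|x|}u^\epsilon\|_{L^2(\R^d)}\le C$, which is (a).

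\emph{Part (b).} Estimate (a) makes $\{u^\epsilon\}$ tight, $\int_{|x|>R}|u^\epsilon|^2\le Ce^{-2\alpha R}$ uniformly in $\epsilon$. Combined with the energy bounds of fact (ii), the stochastic two-scale compactness of Appendix~\ref{Properties of two-scale convergence} yields, along a subsequence, $u^\epsilon\overset{2}{\rightharpoonup}u$ with $u=u_0+u_1\in V$: $u_0\in W^{1,2}(\R^d)$ is the weak $L^2(\R^d)$-limit of the extensions $E_k u^\epsilon$ across the inclusions (Theorem~\ref{extension theorem}), and $u_1\in L^2(\R^d;W^{1,2}_0(\O))$ is the two-scale limit of the rescaled inclusion gradients, with $u_1=0$ for $x\in S_2$ because the inclusions meeting $\overline{S_2}$ were discarded, cf.\ \eqref{28 April 2020 equation 3}. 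Passing to the two-scale limit in the weak formulation of \eqref{section exponential decay equation 2}, with oscillating test functions of the form $\varphi_0(x)+\epsilon\varphi_1(x,T_{x/\epsilon}\omega)$ on the matrix and the appropriate corrector on the inclusions, exactly as in \cite{CCV1,CCV2}, shows that $u$ solves $\A^\mathrm{hom}u=\lambda_0 u$ weakly. To exclude $u=0$ we use tightness: on any $B_R$, $\chi_1^\epsilon u^\epsilon$ converges strongly stochastically two-scale to $\chi_{\Omega\setminus\O}(\omega)u_0$ (Rellich for the extensions), $\chi_2 u^\epsilon\to u_0$ strongly in $L^2(S_2)$, and $\chi_0^\epsilon u^\epsilon$ converges strongly stochastically two-scale to $\chi_\O(\omega)(u_0+u_1)$ (bounded rescaled Dirichlet energy); adding these disjointly supported pieces and invoking tightness gives $\|u^\epsilon\|_{L^2(\R^d)}\to\|u\|_{L^2(\R^d\times\Omega)}$. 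Since $\|u^\epsilon\|_{L^2(\R^d)}=1$, we get $\|u\|_{L^2(\R^d\times\Omega)}=1$, so $u$ is a genuine eigenfunction and $\lambda_0\in\sigma_p(\A^\mathrm{hom})$. As $\sigma_\mathrm{ess}(\A^\mathrm{hom})=\sigma(\hat\A^\mathrm{hom})=\sigma(-\Delta_\O)\cup\overline{\{\lambda:\beta(\lambda)\ge0\}}\subset\mathcal{G}$ (by Theorem~\ref{theorem spectrum of A hom} and the stability of the essential spectrum under the defect) and $\lambda_0\notin\mathcal{G}$, the point $\lambda_0$ is an isolated eigenvalue of finite multiplicity, i.e.\ $\lambda_0\in\sigma_d(\A^\mathrm{hom})$. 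Finally, the norm convergence just obtained is precisely the additional requirement in Definition~\ref{definition stochastic two scale convergence}, so in fact $u^\epsilon\overset{2}{\to}u$ strongly, completing the proof.

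The main obstacle is the uniform coercivity estimate of Part (a) with the sharp constant $\beta_*=|\beta_\infty(\lambda_0)|$. Because the high-contrast form is not coercive in the usual sense — it is essentially negative on functions concentrated on the inclusions — one cannot appeal to a bare Combes--Thomas bound; one must instead reproduce, uniformly in $\epsilon$ and locally in $x$, the spectral analysis underpinning Theorem~\ref{th1.13}. The delicate points are that the elimination of the inclusion degrees of freedom is legitimate only by virtue of the orthogonality in fact (i) and the uniform invertibility of $-\Delta_\O-\lambda_\epsilon$ near $\lambda_0$; that this orthogonality has to be carried through the weight $\phi_N$ and the cutoff $\eta_R$, which it does up to $O(\epsilon)$-errors since $\phi_N$ is slowly varying on the inclusion scale; and that the $L^2$-remainders produced by the extension operator must be shown to be genuinely lower order.
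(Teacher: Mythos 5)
Your outline gets the overall architecture of part (a) right: Agmon-type weight, cube-by-cube localisation using $\ell_{\lambda,L}$, uniform domination of $\sup_x\ell_{\lambda_\epsilon,L}(x,\omega)$ by $\beta_\infty(\lambda_\epsilon)+\delta$, and the final algebraic step requiring $\gamma\alpha^2<\beta_*-\eta$. You also correctly identify that a bare Combes--Thomas bound cannot work because the form is not coercive on the inclusions, and that one must reproduce the spectral analysis behind Theorem~\ref{th1.13} uniformly in $\epsilon$ and in $x$. That said, several points need attention.

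\emph{On ``fact (i)''.} The assertion that ``$u^\epsilon$ restricted to each inclusion $\epsilon\O_\omega^k$ is $L^2$-orthogonal to $W^{1,2}_0(\epsilon\O_\omega^k)$'' is false: $W^{1,2}_0(\epsilon\O_\omega^k)$ is dense in $L^2(\epsilon\O_\omega^k)$, so this would force $u^\epsilon=0$ on the inclusion. Testing the eigenvalue equation against an interior Dirichlet eigenfunction $e$ with eigenvalue $\mu\neq\lambda_\epsilon$ yields $(\mu-\lambda_\epsilon)\int u^\epsilon e=-\epsilon^2\int_{\partial}u^\epsilon\,\partial_n e$, which does not vanish. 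What is true, and what your ``i.e.'' clause states, is that $(-\epsilon^2\Delta-\lambda_\epsilon)u^\epsilon=0$ weakly in the inclusions, so $u^\epsilon$ there is the $\lambda_\epsilon$-harmonic extension of its trace. This is the fact you want, and it is what underlies the decomposition $u^\epsilon=\tilde u^\epsilon+u^\epsilon_0$ with $u^\epsilon_0\in W^{1,2}_0(S_0^\epsilon)$ solving $(-\epsilon^2\Delta-\lambda_\epsilon)u^\epsilon_0=\lambda_\epsilon\tilde u^\epsilon$ on the inclusions; but ``orthogonal to $W^{1,2}_0$'' is a genuinely wrong characterisation.

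\emph{On the coercivity estimate.} Your version tests the eigenvalue equation against $\phi_N^2 u^\epsilon$, while the paper tests against $\psi_R\,\tilde u^\epsilon$, i.e.\ the weight times the \emph{harmonic extension}. This is not a cosmetic difference. The cube-by-cube averaging that produces $\ell_{\lambda_\epsilon,L}$ replaces $\psi_R^{1/2}\tilde u^\epsilon$ by its average and controls the remainder by $\epsilon\nabla(\psi_R^{1/2}\tilde u^\epsilon)$, which is uniformly bounded precisely because $\tilde u^\epsilon$ has $O(1)$ gradient. If you work with $u^\epsilon$ directly, its gradient in the inclusions is $O(\epsilon^{-1})$ and the averaging error is no longer small without first introducing the extension. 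Your plan ``use fact (i) to remove the inclusion degrees of freedom'' is precisely the place where the extension operator of Theorem~\ref{extension theorem} and the $L^\infty$ bound on $b^\epsilon_{\lambda_\epsilon}$ (the paper's Corollary~\ref{corollary L infinity estimates}) must enter, together with a lemma controlling the weight across extension domains (the paper's Lemma~\ref{lemma about moving psi_R around}, which exploits $\sup_{\epsilon\B_\omega^k}\psi_R^{1/2}\le e^{\alpha\epsilon\sqrt d}\inf_{\epsilon\B_\omega^k}\psi_R^{1/2}$). So the ``plan'' you describe is not a shortcut — it is the paper's proof, and once you carry it out you will effectively rewrite the test function as $\psi_R\tilde u^\epsilon$ rather than $\phi_N^2 u^\epsilon$.

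\emph{On part (b).} There is a concrete gap in the claim that ``$\chi_0^\epsilon u^\epsilon$ converges strongly stochastically two-scale to $\chi_\O(\omega)(u_0+u_1)$ (bounded rescaled Dirichlet energy).'' Boundedness of $\|u^\epsilon_0\|_{L^2}+\epsilon\|\nabla u^\epsilon_0\|_{L^2}$ gives, by Theorem~\ref{theorem stochastic two scale convergence}(vi), only \emph{weak} stochastic two-scale compactness of $u^\epsilon_0$; it does not imply convergence of $\|u^\epsilon_0\|_{L^2}$ to $\|u_1\|_{L^2(\R^d\times\Omega)}$. This is the hard half of the strong convergence and it does not follow from Rellich-type arguments because the inclusion oscillations persist. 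The paper handles this via a Zhikov-type duality argument (Lemma~\ref{lemma 2 stochastic two-scale}): introduce the auxiliary solution $z^\epsilon\in W^{1,2}_0(S_0^\epsilon)$ of the same microscopic problem driven by $u^\epsilon_0$, identify its two-scale limit $z$, use $z^\epsilon$ and $u^\epsilon_0$ as cross test functions to show $\lim\int_{S_0^\epsilon}(u^\epsilon_0)^2=\int u_0\,z$, and then repeat in the limit to get $\int u_0\,z=\int u_1^2$. Without this (or an equivalent) argument, you have only weak two-scale convergence of the inclusion piece, and the final claim $\|u^\epsilon\|_{L^2}\to\|u\|_{L^2(\R^d\times\Omega)}$ does not follow. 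Tightness from part (a) and the strong $L^2(B_R)$ convergence of $\tilde u^\epsilon$ do take care of the matrix and defect pieces, as you say; it is specifically the inclusion piece that requires extra work.
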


\begin{remark}We prove the exponential decay  for the eigenfunctions whose eigenvalues converge to a point in a gap of the set $\mathcal G$. Note that the gaps of $\mathcal{G}$ are given by $\{\l:\,\beta_\infty(\l) < 0\}$.  In fact,  we expect that the range of admissible $\alpha$'s in \eqref{2.3} to extend (similarly to the periodic setting \cite{cherdantsev}) up until $\sqrt{|\beta(\l_0)|/\gamma}$; recall that $\beta(\l)$ characterises the gaps of $\sigma(\widehat{\mathcal{A}}^{\rm hom})$ as $\{\l:\,\beta(\l) < 0\}$.  This conjecture is supported by the notions of statistically relevant and irrelevant spectra introduced in \cite{CCV2}. Indeed, the statistically relevant limiting spectrum of $\widehat \A^\e$ coincides with $\sigma(\widehat{\mathcal{A}}^{\rm hom})$, while the quasimodes of  $\widehat\A^\e$ corresponding to the spectrum contained in the gaps of  $\sigma(\widehat{\mathcal{A}}^{\rm hom})$ have most of their energy ``far away'' from the origin (for small enough $\e$). Therefore, the eigenfunctions of $\A^\e$ localised on the defect should not ``feel'' the statistically irrelevant spectrum. 
Unfortunately, by means of the techniques developed in the current paper we were unable to replace $\beta_\infty$ with $\beta$ in \eqref{2.3}.  Doing \textcolor{black}{so} would require a new set of tools which we plan to develop elsewhere, see also Remark \ref{r1.15}.
\end{remark}

\

\textcolor{black}{The n}ext theorem, whose statement is true almost surely, establishes an ``asymptotic'' one-to-one correspondence between the defect eigenvalues and eigenfunctions of $\A^\epsilon$ and $\A^\mathrm{hom}$ as $\epsilon\to 0$.

\begin{theorem}
\label{theorem one-to-one correspondence}
\begin{enumerate}
\item[(a)]
Suppose there exists a sequence of positive real numbers $\{\epsilon_n\}$ such that 
\begin{enumerate}[(i)]
\item
$\lim_{n\to +\infty}\epsilon_n=0$ and

\item
for every $n$ there exist (at least) $m$ eigenvalues $\lambda_{\epsilon_n,1}\le \ldots \le \lambda_{\epsilon_n,m}$ of $\A^{\epsilon_n}$, with account of multiplicity, satisfying
\begin{equation*}
\lim_{n\to+\infty}\lambda_{\epsilon_n,j}=\lambda_0 \in \mathbb{R}\setminus \mathcal{G}, \qquad j=1,\ldots, m.
\end{equation*}
\end{enumerate}
Then, $\lambda_0$ is an eigenvalue of $\mathcal{A}^\mathrm{hom}$ of multiplicity at least $m$.

\item[(b)]
Let $\lambda_0\in \mathbb{R}\setminus \mathcal{G}$ be an eigenvalue of $\mathcal{A}^\mathrm{hom}$ of multiplicity $m$. Then, \textcolor{black}{in any neighbourhood of $\lambda_0$} for sufficiently small $\epsilon$ there exist at least $m$ distinct (with account of multiplicity) eigenvalues $\lambda_{\epsilon,j}\in \sigma_d(\A^\epsilon)$, $j=1,\ldots,m$,  such that 
\[
\lim_{\epsilon\to 0}\lambda_{\epsilon, j}=\lambda_0, \qquad j=1,\ldots,m.
\]
\end{enumerate}
\end{theorem}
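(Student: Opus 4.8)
The plan is to deduce Theorem~\ref{theorem one-to-one correspondence} from the two preceding main theorems, together with a careful bookkeeping of multiplicities via spectral projections. Part~(a) is essentially a multiplicity-refined version of Theorem~\ref{main theorem 2}(b): the existence of $m$ eigenvalues (with multiplicity) of $\A^{\epsilon_n}$ converging to $\lambda_0$ forces $\lambda_0\in\sigma_d(\A^\mathrm{hom})$ with multiplicity $\ge m$. Part~(b) is a multiplicity-refined version of Theorem~\ref{main theorem 1}: an eigenvalue $\lambda_0$ of $\A^\mathrm{hom}$ of multiplicity $m$ in the gap must be approximated by $m$ sequences of defect eigenvalues of $\A^\epsilon$. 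The core analytic input for both is already in place — exponential decay (Theorem~\ref{main theorem 2}(a)) and strong stochastic two-scale convergence of eigenfunctions, plus the Hausdorff-type convergence statement — so what remains is to upgrade these from ``one eigenvalue'' statements to ``$m$ eigenvalues counted with multiplicity'' statements.

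For part~(a), I would proceed as follows. Fix a small $\delta>0$ so that $(\lambda_0-\delta,\lambda_0+\delta)$ is contained in the gap $\R\setminus\mathcal{G}$ and contains no eigenvalue of $\A^\mathrm{hom}$ other than possibly $\lambda_0$ itself. Let $u^{\epsilon_n}_1,\dots,u^{\epsilon_n}_m$ be orthonormal eigenfunctions corresponding to $\lambda_{\epsilon_n,1}\le\cdots\le\lambda_{\epsilon_n,m}$. By Theorem~\ref{main theorem 2}(a), applied uniformly to this finite-dimensional family (the bound~\eqref{2.3} depends only on $\lambda_0$, $\beta_\infty$ and $\gamma$, not on which eigenfunction in the cluster we pick), each $u^{\epsilon_n}_j$ satisfies a uniform exponential decay estimate; hence, along a subsequence, each $u^{\epsilon_n}_j$ strongly stochastically two-scale converges to some $u^0_j$, which by Theorem~\ref{main theorem 2}(b) is an eigenfunction of $\A^\mathrm{hom}$ at eigenvalue $\lambda_0$. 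The key point is that strong stochastic two-scale convergence preserves inner products: the compactness coming from the exponential tail bound (which gives genuine, not just two-scale, precompactness of the ``slow'' component, since mass cannot escape to infinity) ensures $\langle u^0_i,u^0_j\rangle_{L^2(\R^d\times\Omega)}=\lim_n\langle u^{\epsilon_n}_i,u^{\epsilon_n}_j\rangle_{L^2(\R^d)}=\delta_{ij}$. Therefore $u^0_1,\dots,u^0_m$ are orthonormal eigenfunctions of $\A^\mathrm{hom}$ at $\lambda_0$, so the multiplicity of $\lambda_0$ is at least $m$.

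For part~(b), let $\lambda_0$ be an eigenvalue of $\A^\mathrm{hom}$ of multiplicity exactly $m$, with orthonormal eigenfunctions $w_1,\dots,w_m$. The strategy is a standard min–max / spectral-projection argument: one shows that $\dim\operatorname{ran}E^\epsilon_{(\lambda_0-\delta,\lambda_0+\delta)}\ge m$ for all sufficiently small $\epsilon$, where $E^\epsilon$ is the spectral projection of $\A^\epsilon$, by constructing $m$ approximately orthonormal quasimodes of $\A^\epsilon$ with $\|(\A^\epsilon-\lambda_0)v^\epsilon_j\|\le o(1)$. The quasimodes are built from the homogenised eigenfunctions $w_j=w_{j,0}+w_{j,1}\in V$ by the usual two-scale ansatz: take the ``macroscopic'' part $w_{j,0}$ (suitably cut off and mollified), and add a corrector built from $w_{j,1}(x,T_{x/\epsilon}\omega)$ inside the $\epsilon$-inclusions, patched with the solutions of the cell/inclusion problems, exactly as in the construction underlying Theorem~\ref{main theorem 1}. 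Since $\lambda_0$ lies in a gap of $\mathcal{G}$ and hence (by Theorem~\ref{th1.13}) in a gap of $\sigma(\A^\epsilon)_{\mathrm{ess}}=\sigma_{\mathrm{ess}}(\widehat\A^\epsilon)$ for small $\epsilon$, any spectrum of $\A^\epsilon$ in $(\lambda_0-\delta,\lambda_0+\delta)$ is discrete, and the existence of $m$ linearly independent quasimodes with small residual forces at least $m$ eigenvalues there, counted with multiplicity. Finally, a diagonal argument over a decreasing sequence $\delta\to 0$ produces $m$ sequences $\lambda_{\epsilon,j}\to\lambda_0$; that these can be chosen ``distinct with account of multiplicity'' follows because part~(a) rules out the cluster having more than $m$ members collapsing to $\lambda_0$ (otherwise the multiplicity of $\lambda_0$ would exceed $m$), so exactly $m$ eigenvalue-branches converge to $\lambda_0$.

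The main obstacle I anticipate is the bookkeeping of multiplicities under two-scale convergence in part~(a) — specifically, making rigorous that strong stochastic two-scale convergence of the normalised eigenfunctions preserves orthonormality. This is not automatic: two-scale limits can a priori ``lose mass'', and one genuinely needs the uniform exponential decay from Theorem~\ref{main theorem 2}(a) to prevent mass from escaping to spatial infinity, thereby converting weak two-scale convergence plus norm convergence into the kind of compactness that respects inner products. A secondary technical point, in part~(b), is verifying that the quasimodes $v^\epsilon_j$ obtained from $w_1,\dots,w_m$ remain \emph{linearly independent in the limit}, i.e.\ that their Gram matrix converges to the identity; this again reduces to the two-scale convergence machinery of Appendix~\ref{Properties of two-scale convergence} and the fact that the corrector terms, living at scale $\epsilon$ inside the inclusions, contribute the ``microscopic'' $L^2(\R^d\times\Omega)$ mass $\|w_{j,1}\|^2$ in the limit, with no cross interference between distinct $j$ because the $w_j$ are orthogonal in $H$.
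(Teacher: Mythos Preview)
Your proposal is correct and follows essentially the same approach as the paper: part~(a) uses strong stochastic two-scale convergence of an orthonormal family of eigenfunctions (via Theorem~\ref{main theorem 2}) to produce an orthonormal family of eigenfunctions of $\A^\mathrm{hom}$ at $\lambda_0$, and part~(b) builds $m$ quasimodes from an orthonormal basis of the $\lambda_0$-eigenspace of $\A^\mathrm{hom}$ (via Theorem~\ref{main theorem 1 reformulated}) and uses a spectral-projection estimate to conclude $\dim\operatorname{ran}E^\epsilon_{(\lambda_0-\delta,\lambda_0+\delta)}\ge m$. The only superfluous element is the diagonal argument and the invocation of part~(a) at the end of your part~(b): the theorem only asserts \emph{at least} $m$ sequences, so no upper bound on the cluster size is needed, and once $\dim\operatorname{ran}E^\epsilon_{(\lambda_0-\delta,\lambda_0+\delta)}\ge m$ is established for every small $\delta$ the conclusion follows directly.
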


\textcolor{black}{
\begin{remark}
\label{remark rate of convergence}
In  the deterministic periodic setting \cite{KS} Kamotski and Smyshlyaev obtained  quantitative results on the rate of convergence of the
eigenvalues. Combined with the results of \cite{cherdantsev}, the results from \cite{KS} imply (in our notation) 
\begin{equation*}
	\begin{gathered}
			|\lambda_{\epsilon, j}-\lambda_0| \leq C \e^{1/2},\quad  j=1,\ldots,m,	
	\end{gathered}
\end{equation*}
and a similar statement with $\e^{1/2}$ convergence in $L^2$-norm can be formulated for the corresponding eigenfunctions, see \cite[Theorem 7.1]{KS}  for details.
In the current paper, we are unable to formulate similar results, because a quantitative theory for the homogenisation corrector and other mathematical quantities appearing in \eqref{lemma bilinear form equation 2} in the high-contrast stochastic setting is not yet available.
This notwithstanding,  as soon as one is able to quantitatively describe the convergence of the objects on the right-hand side of \eqref{new proof main theorem 1 temp 3},  the latter provides a starting point for the analysis of rate of convergence for the eigenvalues and eigenfunctions.
\end{remark}
}

\begin{remark}
%	\label{remark rate of convergence}
An explicit example was constructed in \cite{KS}  in the case of the defect being a ball, $\textcolor{black}{\mathcal{D}}= B_R(0)$, and under the assumption of isotropy of the matrix homogenised coefficients $A_1^{\rm hom}$, i.e. $A_1^{\rm hom} = a_1^{\rm hom} {\rm Id}$. The example shows that by changing the radius of the ball one can induce localised modes for $\A^\mathrm{hom}$ anywhere in the gaps of its essential spectrum. Note that the assumption of isotropy of $A_1^{\rm hom}$ is more natural in the stochastic setting. Indeed, it is not difficult to see that whenever the  probability space $(\Omega,\mathcal F, P)$ and the dynamical system $\textcolor{black}{(T_y)_{y\in \R^d}}$ are stationary under the rotations in $\R^d$, the matrix of homogenised coefficients $A_1^{\rm hom}$ is isotropic.
Hence, the argument of \cite{KS} is applicable in the stochastic setting, thus providing an example of $\A^\mathrm{hom}$ with eigenvalues in the gaps of the essential spectrum.
\end{remark}
\

Our paper is structured as follows.

Section~\ref{Proof of main theorem 1} is concerned with the proof of our first main result, Theorem~\ref{main theorem 1}.
A series of lemmata leads up to the key technical estimate \eqref{lemma bilinear form equation 1}, which constitutes the central ingredient of the proof.

The subsequent two sections are concerned with the proof of Theorem~\ref{main theorem 2}.
In Section~\ref{Uniform exponential decay of eigenfunctions} we show that the exponential decay of eigenfunctions corresponding to eigenvalues $\lambda_\epsilon\in\A^\epsilon$, $\lambda_\epsilon\to \lambda_0 \not \in \mathcal{G}$,  is \emph{uniform in $\epsilon$}. This allows us to prove, in Section~\ref{Strong stochastic two-scale convergence}, that such eigenfunctions strongly stochastically two-scale converge to an eigenfunction of the {\color{black} limiting operator} $\A^\mathrm{hom}$.

Section~\ref{Asymptotic spectral completeness} contains the proof of Theorem~\ref{theorem one-to-one correspondence}, which follows from the results of Section~\ref{Strong stochastic two-scale convergence}.

The paper is complemented by two appendices. Appendix~\ref{appendix A} contains some background material on \textcolor{black}{(stochastic) two-scale convergence}, whereas Appendix~\ref{appendix B} provides an alternative self-contained proof of the stability of the essential spectrum of our operators when coefficients are perturbed in a relatively compact region.

\section{Approximating eigenvalues of $\A^\mathrm{hom}$ in the gaps of $\mathcal G$}
%\section{Proof of Theorem~\ref{main theorem 1}}
\label{Proof of main theorem 1}

Assume that $\lambda_0\in \R \setminus \mathcal{G}$ is an eigenvalue of $\A^\mathrm{hom}$ due to the defect.
The task at hand is to show that, as $\epsilon$ tends to zero, there are eigenvalues of $\A^\epsilon$ arbitrarily close to $\lambda_0$. This will be achieved by first constructing approximate eigenfunctions --- \emph{quasimodes} --- for $\A^\epsilon$ starting from an eigenfunction of $\A^\mathrm{hom}$ and then arguing that this implies the existence of genuine eigenvalues $\lambda_\epsilon\in \sigma_d(\A^\epsilon)$ close to $\lambda_0$ for sufficiently small $\epsilon$. In doing so, we will follow a strategy proposed by Kamotski and Smyshlyaev \cite{KS}  (who followed, in turn, a general strategy found, e.g., in \cite{vishik}) in the periodic case, suitably adapted to our setting. %\textcolor{black}{See also , where a similar technique is employed.}

\subsection{Proof of Theorem~\ref{main theorem 1}}
\label{subsection proof main theorem 1}

Let $u^0(x,\omega)=u_0(x)+\textcolor{black}{\overline{u}}_1(x,\omega)\in V$ be an eigenfunction of $\A^\mathrm{hom}$ corresponding to $\lambda_0$, i.e.\ satisfying the system of equations
\begin{subequations}
\begin{equation}
\label{eigenvalue Ahom part 1}
\int_{\R^d\setminus \textcolor{black}{\mathcal{D}}} A_1^\mathrm{hom} \nabla u_0 \cdot \nabla \varphi_0+\int_{\textcolor{black}{\mathcal{D}}} A_2\nabla u_0 \cdot \nabla \varphi_0=\lambda_0\int_{\R^d}(u_0+\textcolor{black}{\E[\overline{u}_1]})\,\varphi_0 \quad \forall\varphi_0 \in H^1(\R^d),
\end{equation}
\color{black}
\begin{equation}
\label{eigenvalue Ahom part 2}
\E\left[
(\nabla_y u_1(x,\cdot,\cdot) \cdot \nabla_y \varphi_1(x,\cdot,\cdot)) \,1_\Omega
\right]=\lambda_0\, \E[1_\Omega(u_0(x)+\overline{u}_1(x,\cdot)\overline{\varphi}_1)] \quad \forall\overline{\varphi_1} \in H^1_0(\Omega), \, \forall x\in \R^d\setminus \mathcal{D}.
\end{equation}
\color{black}
\end{subequations}
We assume that
\begin{equation*}
	\|u_0\|_{L^2(\R^d)} = 1.
\end{equation*}
It is easy to see that
\begin{equation}\label{3.2}
\textcolor{black}{\overline{u}}_1(x,\omega):=\lambda_0\,u_0(x)\, \textcolor{black}{\overline{b}}_{\lambda_0}(\omega),
\end{equation}
solves \eqref{eigenvalue Ahom part 2}, \textcolor{black}{where $\overline{b}_{\lambda_0}$ is defined as in \eqref{definition blambda 1}, \eqref{definition blambda 2}}. Substituting \eqref{3.2} into \eqref{eigenvalue Ahom part 1} we obtain the macroscopic equation on $u_0$:
\begin{multline}\label{3.3}
%	\label{eigenvalue Ahom part 1}
	\int_{\R^d\setminus \textcolor{black}{\mathcal{D}}} A_1^\mathrm{hom} \nabla u_0 \cdot \nabla \varphi_0+\int_{\textcolor{black}{\mathcal{D}}} A_2\nabla u_0 \cdot \nabla \varphi_0= \beta(\lambda_0) \int_{\R^d\setminus \textcolor{black}{\mathcal{D}}}u_0 \,\varphi_0  + \lambda_0 \int_{\textcolor{black}{\mathcal{D}}}u_0 \,\varphi_0 
	\\
	\quad \forall\varphi_0   \in H^1(\R^d).
\end{multline}

%Let $\eta: \R^d \to [0,+\infty)$ be an infinitely smooth function such that $\eta|_{[-1/4,1/4]^d}=1$ and $\operatorname{supp}\eta \subset [-1/2, 1/2]^d$.
%It is well known that such a function exists; it can, in fact, be written down explicitly, see, e.g., \cite[Section~3]{ELV}. %For $L\ge 1$ put $\eta_L:=\eta(\,\cdot\,/L)$ and $u_L:=\eta_L\,u_0$.

For $0<\epsilon\le 1$, consider the function
\begin{equation}
\label{29 April 2020 equation 2}
u^\epsilon:=
u^0(x,x/\epsilon,\omega)
=
\begin{cases}
u_0(1+\lambda_0\,b^\epsilon_{\lambda_0}) & \text{if }x\in \textcolor{black}{\mathcal{I}}^\epsilon(\omega),\\
u_0 &\text{otherwise},
\end{cases}
\end{equation}
where 
\begin{equation*}\label{3.5}
	b^\epsilon_{\lambda_0}(x):=b_{\lambda_0}(x/\epsilon,\omega)
\end{equation*}
is the $\e$-realisation of $b_{\l_0}$.
  For the remainder of this section we will drop $\l_0$ from the notation for $b_{\l_0}$ and  $b^\e_{\l_0}$ to keep it light. We will revert back to this notation later, however, when the dependence on $\l_0$ becomes important.
%Of course, $u^\epsilon_L(x,\omega)=\eta_L(x)\,u^0(x,T_{x/\epsilon}\omega)$.

%\begin{lemma}
%The function $(x,\omega) \mapsto u^\epsilon_L(x,\omega)$ is measurable.
%\end{lemma}
%
%\begin{proof}
%\textcolor{black}{To do.} 
%\end{proof}

Though the function $\textcolor{black}{\overline{b}}$ is square integrable in probability space, its $\epsilon$-realisation in physical space $b^\epsilon$ is only in $L^2_{\mathrm{loc}}(\R^d)$. This notwithstanding, the function $u^\epsilon$ is square integrable, because a) $u_0$ is exponentially decaying at infinity, see the proof of   \cite[Theorem VI]{AADH}, b) $u_0 \in L^\infty_{\rm loc}(\R^d)$ by   \cite[Theorem~8.24]{GT}.  Also note that since the boundary of $\textcolor{black}{\mathcal{D}}$ is assumed to be $C^{1,\alpha}$, the results from   \cite{vogelius} imply that
\begin{equation}
\label{regularity of u_0}
u_0\in H^1(\R^d)\cap W^{1,\infty}(\R^d).
\end{equation}

	In fact, one can specify the rate of the exponential decay of $u_0$ using the following direct argument dating back to Agmon \cite{agmon1}, \cite{agmon2}. Consider the one-parameter family of functions $\psi_R: \R^d \to \R$,
	\begin{equation}
		\label{proof decay equation 5}
		\psi_R:=\chi_R \,e^{2\theta|x|}+(1-\chi_R)\, e^{2\theta R},
	\end{equation}
	where $\chi_R$ is the characteristic function of the ball $B_R(0)$ and $\theta$ is a fixed positive constant. It is easy to see that the function $\psi_R$ satisfies
	\begin{equation}
		\label{proof decay equation 12}
		|\nabla\psi_R^{1/2}|^2\le \theta^2 \psi_R.
	\end{equation}
Setting $\varphi_0 = \psi_R u_0$  in \eqref{3.3} and making use of the simple algebraic identity 
\begin{equation*}
%	\label{proof decay equation 8}
	|A^{1/2}\nabla(\psi_R^{1/2}u_0)|^2=|A^{1/2}\nabla(\psi_R^{1/2})|^2(u_0)^2+A \nabla u_0\cdot \nabla(\psi_R u_0) \quad \forall A=A^T \in GL(d,\R),
\end{equation*}
we arrive at 
\begin{multline}\label{3.8}
	%	\label{eigenvalue Ahom part 1}
	\int_{\R^d\setminus \textcolor{black}{\mathcal{D}}} |(A_1^\mathrm{hom})^{1/2}\nabla(\psi_R^{1/2}u_0)|^2 +\int_{\textcolor{black}{\mathcal{D}}} |A_2^{1/2}\nabla(\psi_R^{1/2}u_0)|^2
	\\
	 +	\int_{\R^d\setminus \textcolor{black}{\mathcal{D}}} \big[ - \beta(\lambda_0)\psi_R - |(A_1^\mathrm{hom})^{1/2}\nabla(\psi_R^{1/2})|^2\big](u_0)^2 
	 \\
	 = \int_{ \textcolor{black}{\mathcal{D}}} |A_2^{1/2}\nabla(\psi_R^{1/2})|^2(u_0)^2 + \lambda_0 \int_{\textcolor{black}{\mathcal{D}}}\psi_R(u_0)^2.
\end{multline}
It is easy to see that for 
\begin{equation}\label{3.9a}
 0 < \theta <\sqrt{ \frac{|\beta(\l_0)|}{\textcolor{black}{\gamma^{\rm hom}}}},
\end{equation}
where $\textcolor{black}{\gamma^{\rm hom}}$ is the greatest eigenvalue of the matrix $A_1^{\rm hom}$, one has the bound 
$$
 - \beta(\lambda_0)\psi_R - |(A_1^\mathrm{hom})^{1/2}\nabla(\psi_R^{1/2})|^2 \geq C \psi_R.
 $$ 
 Since the right-hand side in \eqref{3.8} is bounded uniformly in $R$, we see that $\psi_R(u_0)^2$ is summable in $\R^d$ for any $R$.   Finally, utilising the Harnack inequality for solutions of elliptic equations, see e.g. \cite[Theorem~8.17]{GT}, we arrive at the pointwise estimate
 \begin{equation}\label{3.9}
 	|u_0(x)| \leq C e^{-\theta |x|}.
 \end{equation}
Moreover, the same bound (with the same $\theta$) holds for the gradient of $u_0$  by arguing along the lines of \cite[Theorem~3.9]{GT}:
 \begin{equation}\label{3.10}
	|\nabla u_0(x)| \leq C e^{-\theta |x|}.
\end{equation}
A similar strategy, although considerably more technical, since it will involve a delicate \textcolor{black}{two-scale} analysis, will be used in the proof of the exponential decay of the defect modes of $\A^\e$, see Theorem \ref{theorem uniform exponential decay}.

\begin{lemma}
\label{lemma ub}
We have
\begin{equation}
\label{lemma ub equation 1}
\|u_0\, b^\epsilon\|_{L^2(\R^d)}
+
\|b^\epsilon\,\nabla u_0 \|_{L^2(\R^d)}
+
\epsilon\,\|u_0 \,\nabla b^\epsilon\|_{L^2(\R^d)}
+
\epsilon\,\|\nabla u_0 \cdot \nabla b^\epsilon \|_{L^2(\R^d)}
\le C,
\end{equation}
where the constant $C$ depends on $\lambda_0$ but is independent of $\epsilon$.
\end{lemma}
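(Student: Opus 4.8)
The plan is to estimate each of the four terms in \eqref{lemma ub equation 1} separately, exploiting the exponential decay \eqref{3.9}--\eqref{3.10} of $u_0$ and $\nabla u_0$ together with the local integrability of $b^\epsilon$ and its gradient guaranteed by the Ergodic Theorem (Theorem~\ref{ergodic theorem}), applied to $b=b_{\lambda_0}\in W^{1,2}_0(\O)\subset L^2(\Omega)$ and $\nabla_\omega b\in L^2(\Omega)$. Recall that $b$ and $|\nabla_\omega b|^2$ are functions in $L^1(\Omega)$ (in fact $L^2(\Omega)$ for $b$ and $L^1(\Omega)$ for $|\nabla_\omega b|^2$), and that by Birkhoff's theorem the quantity $\langle |b|^2 \rangle_{\square_0^1 + z}$, averaged over a unit cube, converges to $\int_\Omega |b|^2\,dP$ as $\epsilon\to0$ after rescaling; more precisely, $\int_{\square_z^1}|b^\epsilon(x)|^2\,dx = \int_{\square_z^1}|b(T_{x/\epsilon}\omega)|^2\,dx \to \int_\Omega |b|^2\,dP$ for every fixed cube. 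The key structural observation is that $\R^d$ can be tiled by unit cubes $Q_z := \square_z^1$, $z\in\Z^d$, and on each cube the decaying weight $e^{-\theta|x|}$ is comparable to $e^{-\theta|z|}$, so that summing over $z\in\Z^d$ the geometric series $\sum_z e^{-2\theta|z|}$ converges.

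First, for the term $\|u_0\,b^\epsilon\|_{L^2}$: using \eqref{3.9} we bound $\int_{Q_z}|u_0|^2|b^\epsilon|^2 \le C e^{-2\theta|z|}\int_{Q_z}|b^\epsilon|^2$. The point is that $\sup_\epsilon \int_{Q_z}|b^\epsilon|^2$ is bounded uniformly in both $z$ and $\epsilon$: uniformity in $z$ follows from stationarity of the dynamical system (the law of $b(T_{x/\epsilon}\omega)$ restricted to $Q_z$ is, after translation, the same for all $z$ — more carefully, one invokes the uniform-in-$z$ bound on $\epsilon$-realisations that follows from the proof of the Ergodic Theorem, or simply notes that $\int_{Q_z}|b^\epsilon|^2 = \epsilon^d \int_{\epsilon^{-1}Q_z}|b(T_y\omega)|^2\,dy$ and applies Birkhoff on the fixed cube $Q_0$ together with the subadditive/additive structure), and uniformity in $\epsilon$ from the convergence in Birkhoff. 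Hence $\|u_0 b^\epsilon\|_{L^2}^2 \le C\sum_{z\in\Z^d} e^{-2\theta|z|} < \infty$. The same argument with $\nabla u_0$ in place of $u_0$, using \eqref{3.10}, handles the second term $\|b^\epsilon\nabla u_0\|_{L^2}$. For the third and fourth terms we repeat the scheme but now pay with the factor $\epsilon$: $\epsilon^2\int_{Q_z}|u_0|^2|\nabla b^\epsilon|^2 \le C e^{-2\theta|z|}\,\epsilon^2\int_{Q_z}|\nabla b^\epsilon|^2$, and by the chain rule $\nabla(b(T_{x/\epsilon}\omega)) = \epsilon^{-1}(\nabla_\omega b)(T_{x/\epsilon}\omega)$, so $\epsilon^2\int_{Q_z}|\nabla b^\epsilon|^2 = \int_{Q_z}|(\nabla_\omega b)(T_{x/\epsilon}\omega)|^2\,dx$, which is again uniformly bounded in $z$ and $\epsilon$ by Birkhoff applied to $|\nabla_\omega b|^2 \in L^1(\Omega)$. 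The fourth term is dominated by the third via Cauchy--Schwarz and \eqref{3.10} (or by the same cube decomposition with the weight $e^{-2\theta|z|}$), since $\epsilon|\nabla u_0\cdot\nabla b^\epsilon| \le \epsilon|\nabla u_0||\nabla b^\epsilon| \le C e^{-\theta|x|}\,\epsilon|\nabla b^\epsilon|$ and $\epsilon|\nabla b^\epsilon| = |(\nabla_\omega b)(T_{x/\epsilon}\omega)|$.

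The main obstacle is making rigorous the claim that $\int_{Q_z}|b(T_{x/\epsilon}\omega)|^2\,dx$ (and similarly with $|\nabla_\omega b|^2$) is bounded \emph{uniformly in both $z$ and $\epsilon$} for a.e.\ fixed $\omega$. For a single cube Birkhoff gives convergence, hence boundedness, but the bound a priori depends on the cube; since there are infinitely many cubes one needs the bound to be uniform. This is where the ergodic/stationarity structure is essential: one writes $\int_{Q_z}|b(T_{x/\epsilon}\omega)|^2\,dx = \epsilon^d\int_{\epsilon^{-1}(Q_z)}|b(T_y\omega)|^2\,dy$ and observes that $\epsilon^{-1}(Q_z)$ is a cube of side $\epsilon^{-1}$; covering it by $O(\epsilon^{-d})$ translates of $Q_0$ and invoking the maximal ergodic theorem (or the uniform convergence over shifts in the ergodic theorem, as in the statement of Theorem~\ref{ergodic theorem} which already packages $f(\cdot/\epsilon,\omega) \in L^2_{\mathrm{loc}}$ with appropriate control) yields a bound of the form $C(\omega)(1 + |z|)^0$, i.e.\ genuinely uniform in $z$ for $\epsilon$ small. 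Once this uniform-in-$(z,\epsilon)$ control is in place, the summation against $\sum_z e^{-2\theta|z|}$ is immediate and all four terms are bounded by a constant depending only on $\lambda_0$ (through $\theta$ and $\|b_{\lambda_0}\|_{L^2(\Omega)}$, $\|\nabla_\omega b_{\lambda_0}\|_{L^2(\Omega)}$), as claimed.
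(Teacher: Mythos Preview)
Your overall architecture --- decompose $\R^d$ into unit pieces, use the exponential decay of $u_0$ to produce a summable weight, and control $b^\epsilon$ piecewise --- is the same as the paper's (the paper uses spherical shells $B_n\setminus B_{n-1}$ rather than cubes, which is immaterial). The gap is exactly the one you flag yourself: a uniform-in-$z$ bound on $\int_{Q_z}|b^\epsilon|^2$ (and its gradient analogue) cannot be extracted from Birkhoff's theorem and the fact that $b\in L^2(\Omega)$. Your proposed fix via the maximal ergodic theorem does not work: the maximal inequality controls $\sup_R |B_R|^{-1}\int_{B_R(0)}$ (supremum over scales at a fixed centre), not the supremum over centres, and for a fixed typical $\omega$ the realisation $y\mapsto |b(T_y\omega)|^2$ is merely $L^1_{\mathrm{loc}}$ with no a priori reason for its averages over large cubes at arbitrary locations to be uniformly bounded. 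Stationarity is a statement about the law, not about a single realisation; applying it to shift the centre back to the origin changes $\omega$.

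What the paper actually uses (via \cite[Lemma~E.2]{CCV2}) is the much stronger input $b_{\lambda_0}\in L^\infty(\Omega)$, which follows from elliptic regularity (Moser iteration) for the equation $-\Delta b=\lambda_0 b+1$ on uniformly bounded inclusions --- this is the content of Lemma~\ref{lemma L infinity estimate}/Corollary~\ref{corollary L infinity estimates} later in the paper. With $\|b^\epsilon\|_{L^\infty}\le C$ the piecewise bound is trivial: $\int_{Q_z}|b^\epsilon|^2\le C$ uniformly in $z,\epsilon$. For the gradient terms, one combines the $L^\infty$ bound with the energy identity $\epsilon^2\int_{\epsilon\O_\omega^k}|\nabla b^\epsilon|^2=\int_{\epsilon\O_\omega^k}(\lambda_0 b^\epsilon+1)b^\epsilon\le C|\epsilon\O_\omega^k|$ on each inclusion, which gives $\epsilon\|\nabla b^\epsilon\|_{L^2(Q_z)}\le C$ after summing over inclusions meeting $Q_z$. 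Once you invoke $b\in L^\infty$ your cube decomposition goes through cleanly; without it the argument is incomplete.
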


\begin{proof}
By partitioning the ball $B_{R}(0)$ into spherical shells of thickness 1, for every $R\in \mathbb{N}$ we get
\begin{equation}
\label{proof lemma ub 1}
\|u_0\, b^\epsilon \|^2_{L^2(B_R(0))} = \sum_{n=1}^{R}
 \|u_0\, b^\epsilon \|^2_{L^2(B_n(0)\setminus B_{n-1}(0))}.
\end{equation}
Arguing as in \cite[Lemma \textcolor{black}{D.8}]{CCV2} we obtain
\begin{equation}
\label{proof lemma ub 2}
\| b^\epsilon\|_{L^2(B_n(0)\setminus B_{n-1}(0))}+\epsilon \|\nabla b^\epsilon\|_{L^2(B_n(0)\setminus B_{n-1}(0))}\le C\, n^{\frac{d-1}{2}}.
\end{equation}
Then utilising \eqref{3.9} in conjunction with  \eqref{proof lemma ub 2} and \eqref{proof lemma ub 1}, we arrive at
\begin{equation}
\label{proof lemma ub 4}
\|u_0\, b^\epsilon \|_{L^2(B_R(0))} \le C \left(R^{\frac{d-1}{2}} e^{-\theta R}+1\right)\le C.
\end{equation}
As the constant $C$ on the right-hand side of \eqref{proof lemma ub 4} is independent of $R$, the latter implies
\begin{equation}
\label{proof lemma ub 5}
\|u_0\, b^\epsilon \|_{L^2(\R^d)} \le C.
\end{equation}

With account of \eqref{regularity of u_0} and \eqref{3.10}
analogous arguments yield
\begin{equation}
\label{proof lemma ub 6}
\|b^\epsilon \,\nabla u_0\|_{L^2(\R^d)}\le C, 
\quad
\epsilon\,\|u_0 \,\nabla b^\epsilon \|_{L^2(\R^d)}
\le C,
\quad
\epsilon\,\|\nabla u_0\cdot \nabla b^\epsilon \|_{L^2(\R^d)}
\le C.
\end{equation}
Formulae \eqref{proof lemma ub 5} and \eqref{proof lemma ub 6} imply \eqref{lemma ub equation 1}.
\end{proof}

One may be tempted to regard $u^\epsilon$ as an approximate eigenfunction of $\A^\epsilon$ for small $\epsilon$, however, $u^\epsilon$ is  not  in the domain of $\A^\epsilon$. Instead, we regard $u^\epsilon$ as an approximate eigenfunction of the resolvent operator $(\A^\epsilon+1)^{-1}$.

Define $\hat{u}^\epsilon$ to be the solution of
\begin{equation}
\label{u hat epsilon one way}
(\A^\epsilon+1)\hat u^\epsilon=(\lambda_0+1)u^\epsilon.
\end{equation}
Elementary results from spectral theory of self-adjoint operators give us
\begin{equation}
\label{inequality on spectum of resolvent}
\begin{split}
\dist((\lambda_0+1)^{-1}, \sigma((\A^\epsilon+1)^{-1})) 
&
\le 
\dfrac{\|(\A^\epsilon+1)^{-1}u^\epsilon-(\lambda_0+1)^{-1}u^\epsilon\|_{L^2(\R^d)}}{\|u^\epsilon\|_{L^2(\R^d)}}
\\
&
=
(\lambda_0+1)^{-1}
\dfrac{\|\hat u^\epsilon-u^\epsilon\|_{L^2(\R^d)}}{\|u^\epsilon\|_{L^2(\R^d)}}.
\end{split}
\end{equation}
Since, clearly, $\|u^\epsilon\|_{L^2(\R^d)}\ge c>0$ uniformly in $\epsilon$, cf.~\eqref{29 April 2020 equation 2}, formula \eqref{inequality on spectum of resolvent} implies that proving Theorem~\ref{main theorem 1} reduces to proving the following. 
\begin{theorem}
\label{main theorem 1 reformulated}
For every $\delta>0$ there exists $\epsilon_0(\delta)>0$ such that for every $\epsilon\le \epsilon_0(\delta)$ we have
\begin{equation*}
\label{estimate main theorem 1 reformulated}
\|\hat u^\epsilon-u^\epsilon\|_{L^2(\R^d)} \le \delta,
\end{equation*}
where $\hat u^\epsilon$ and $u^\epsilon$ are defined by \eqref{u hat epsilon one way} and \eqref{29 April 2020 equation 2}, respectively,
which implies, by \eqref{u hat epsilon one way},
\begin{equation}
\label{theorem 3.2 corollary}
\|(\A^\epsilon-\lambda_0)\hat u^\epsilon\|_{L^2(\R^d)} \le |\lambda_0+1|\delta.
\end{equation}
\end{theorem}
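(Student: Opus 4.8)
The statement is equivalent to $\|\hat u^\epsilon-u^\epsilon\|_{L^2(\R^d)}\to0$ as $\epsilon\to0$, and the plan is to prove this by identifying the stochastic two-scale limit of $\hat u^\epsilon$ and showing that it is $u^0$ \emph{in the strong sense}. First I would record the a priori bounds: testing \eqref{u hat epsilon one way} with $\hat u^\epsilon$ and using $\|u^\epsilon\|_{L^2(\R^d)}\le C$ together with Lemma~\ref{lemma ub} gives
\[
\|\hat u^\epsilon\|_{L^2(\R^d)}+\|(A^\epsilon)^{1/2}\nabla\hat u^\epsilon\|_{L^2(\R^d)}+\|u^\epsilon\|_{L^2(\R^d)}+\|(A^\epsilon)^{1/2}\nabla u^\epsilon\|_{L^2(\R^d)}\le C
\]
uniformly in $\epsilon$. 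Hence, along a subsequence, $\hat u^\epsilon\overset{2}{\rightharpoonup}\hat u^0$ for some $\hat u^0\in V$, and the scaled gradients converge two-scale as well. Passing to the two-scale limit in the weak form of \eqref{u hat epsilon one way} --- i.e.\ running the high-contrast homogenisation procedure of \cite{CCV1,CCV2} for the operator $\A^\epsilon$ (the defect causes no difficulty, altering the coefficients only on the fixed bounded set $S_2$, and the resolvent at $-1$ lies well below all the spectra) --- and using that $(\lambda_0+1)u^\epsilon=(\lambda_0+1)u^0(x,T_{x/\epsilon}\omega)$ converges strongly two-scale to $(\lambda_0+1)u^0$ (Birkhoff's theorem plus the decay of $u_0$, in the spirit of Lemma~\ref{lemma ub}), one obtains $(\A^\mathrm{hom}+1)\hat u^0=(\lambda_0+1)u^0$. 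Since $\A^\mathrm{hom}u^0=\lambda_0u^0$, this reads $(\A^\mathrm{hom}+1)(\hat u^0-u^0)=0$, whence $\hat u^0=u^0$ because $-1\notin\sigma(\A^\mathrm{hom})$.

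It remains to promote weak to strong two-scale convergence. Testing \eqref{u hat epsilon one way} with $\hat u^\epsilon$ gives
\[
\|\hat u^\epsilon\|_{L^2(\R^d)}^2=(\lambda_0+1)\,(u^\epsilon,\hat u^\epsilon)_{L^2(\R^d)}-\int_{\R^d}A^\epsilon\nabla\hat u^\epsilon\cdot\nabla\hat u^\epsilon,
\]
where the first term on the right tends to $(\lambda_0+1)(u^0,u^0)_{L^2(\R^d\times\Omega)}$, being the pairing of the strongly two-scale convergent $u^\epsilon$ with the weakly two-scale convergent $\hat u^\epsilon$. By lower semicontinuity of the high-contrast energy $v\mapsto\int_{\R^d}A^\epsilon|\nabla v|^2$ under two-scale convergence, $\liminf_\epsilon\int A^\epsilon|\nabla\hat u^\epsilon|^2\ge\langle\A^\mathrm{hom}\hat u^0,\hat u^0\rangle=\lambda_0\|u^0\|_{L^2(\R^d\times\Omega)}^2$; combining this with the displayed identity and with the (equally standard) lower semicontinuity $\liminf_\epsilon\|\hat u^\epsilon\|_{L^2(\R^d)}^2\ge\|u^0\|_{L^2(\R^d\times\Omega)}^2$ forces both $\int A^\epsilon|\nabla\hat u^\epsilon|^2\to\lambda_0\|u^0\|^2$ and $\|\hat u^\epsilon\|_{L^2(\R^d)}\to\|u^0\|_{L^2(\R^d\times\Omega)}$, i.e.\ $\hat u^\epsilon\overset{2}{\to}u^0$ strongly. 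As $u^\epsilon\overset{2}{\to}u^0$ strongly too (a direct Birkhoff computation of $\|u^\epsilon\|_{L^2(\R^d)}^2$ using the decay of $u_0$), expanding the square gives
\[
\|\hat u^\epsilon-u^\epsilon\|_{L^2(\R^d)}^2=\|\hat u^\epsilon\|^2-2(\hat u^\epsilon,u^\epsilon)+\|u^\epsilon\|^2\longrightarrow\|u^0\|^2-2\|u^0\|^2+\|u^0\|^2=0.
\]
Since the limit is independent of the subsequence, the whole family converges, so for every $\delta>0$ we have $\|\hat u^\epsilon-u^\epsilon\|_{L^2(\R^d)}\le\delta$ for all sufficiently small $\epsilon$; with \eqref{inequality on spectum of resolvent} and \eqref{u hat epsilon one way} this is exactly Theorem~\ref{main theorem 1 reformulated}, hence Theorem~\ref{main theorem 1}.

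A second route --- the one presumably followed here, after \cite{KS} --- avoids quoting the two-scale homogenisation of the resolvent and instead estimates the residual bilinear form $v\mapsto\int_{\R^d}A^\epsilon\nabla\tilde u^\epsilon\cdot\nabla v-\lambda_0\int_{\R^d}\tilde u^\epsilon v$ of a \emph{corrected} quasimode $\tilde u^\epsilon=u^\epsilon+\epsilon\Theta^\epsilon$, where $\Theta^\epsilon$ adds the first-order high-contrast corrector to $u_0$ on the matrix (with a boundary-layer modification near $\partial S_2$); one checks $\|\tilde u^\epsilon-u^\epsilon\|_{L^2(\R^d)}\to0$ via sublinearity of the corrector and the decay of $\nabla u_0$, subtracts the macroscopic equation \eqref{3.3} (so the terms on $S_2$ cancel identically), treats the inclusion terms using the $\epsilon$-realisation $-\epsilon^2\Delta b^\epsilon=1+\lambda_0b^\epsilon$ of $(-\Delta_\O-\lambda_0)b=1$ and an integration by parts on each $\epsilon\O_\omega^k$ (the $\lambda_0\tilde u^\epsilon$-bulk term cancelling against $-\lambda_0\int_{S_0^\epsilon}\tilde u^\epsilon v$, the rest being $O(\epsilon)$ in the norm $\|(A^\epsilon)^{1/2}\nabla v\|$ by Lemma~\ref{lemma ub}), and treats the matrix terms by replacing $A_1$ with $A_1^\mathrm{hom}$ through the corrector and its (skew) flux corrector; here too the exponential decay \eqref{3.9}--\eqref{3.10} of $u_0$, $\nabla u_0$ and $\nabla^2u_0$ keeps all the integrals over $\R^d$ finite with negligible tails. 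In either route I expect the genuine difficulty to be the matrix contribution and, in the second route, the accompanying interface terms on $\partial S_0^\epsilon$ (several individually non-small surface integrals must be organised so as to cancel to leading order, the residue being absorbed by a trace estimate on the inclusions as in Theorem~\ref{extension theorem}) together with the boundary layer on $\partial S_2$ --- exactly the points where the unboundedness of $\R^d$ and the stochastic (as opposed to periodic) geometry force one to lean on the decay of $u_0$ and on the ergodic/spherical-shell estimates underlying Lemma~\ref{lemma ub}.
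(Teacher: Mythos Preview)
Your two routes are both sound, and the paper follows your second one. Concretely, the paper introduces the corrected quasimode
\[
u^\epsilon_{LC}=u^\epsilon+\eta_L\,\xi_\rho\,N^\epsilon_j\,\partial_j u_0
\]
(with the cut-offs $\eta_L$ at infinity and $\xi_\rho$ near $\partial S_2$ playing exactly the role of your ``boundary-layer modification''), proves the key pointwise estimate $|a^\epsilon(u^\epsilon_{LC}-\hat u^\epsilon,v)|\le C\,\mathcal{C}(\epsilon,\rho,L)\sqrt{a^\epsilon(v,v)}$ via the skew flux corrector $G_j^\epsilon$ and the potential $B^\epsilon$ for $(1-\chi_2)(\langle b\rangle-b^\epsilon)$, then sets $v=u^\epsilon_{LC}-\hat u^\epsilon$ and sends $\epsilon\to0$, $\rho\to0$, $L\to\infty$ in that order. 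One small correction to your sketch of this route: the paper avoids surface integrals on $\partial S_0^\epsilon$ altogether by splitting the test function as $v=\tilde v^\epsilon+v_0^\epsilon$ with $\tilde v^\epsilon$ the harmonic extension into the inclusions; the cancellations you anticipate then occur in the bulk, not on interfaces.

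Your first route---identifying the two-scale limit of $\hat u^\epsilon$ via homogenisation of the resolvent and then upgrading to strong convergence by the energy/norm sandwich---is a genuinely different and more conceptual argument. It trades the explicit corrector analysis for an appeal to the (already established) high-contrast $\Gamma$-liminf and to strong two-scale convergence of $u^\epsilon$, both of which do hold in the whole space once the exponential decay of $u_0$ is used to control tails (the $\Gamma$-liminf localises to balls; the Birkhoff computation of $\|u^\epsilon\|^2$ uses $b_{\lambda_0}^2\in L^1(\Omega)$ and a shell decomposition as in Lemma~\ref{lemma ub}). What the paper's route buys in exchange for the extra work is the explicit remainder $\mathcal{C}(\epsilon,\rho,L)$, which---as the authors note---is the natural starting point for quantitative rates once corrector estimates become available in the stochastic high-contrast setting; your compactness argument gives no such handle.
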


Indeed, suppose we have proved Theorem~\ref{main theorem 1 reformulated}. Then formulae \eqref{inequality on spectum of resolvent}--\eqref{theorem 3.2 corollary}
%and the fact that
%\begin{equation*}
%\lambda_\epsilon \in \sigma_d(\A^\epsilon) \qquad \Longleftrightarrow \qquad \frac{1}{\lambda_\epsilon+1}\in \sigma((\A^\epsilon+1)^{-1})
%\end{equation*}
imply
\begin{equation}
\label{one way almost conclusion}
\lim_{\epsilon \to 0} \operatorname{dist}(\lambda_0, \sigma(\A^\epsilon))=0.
\end{equation}
Now, since $\lambda_0$ is in the gaps of $\mathcal{G}$, by Theorem \ref{th1.13} there exists $\rho>0$ such that, for sufficiently small $\epsilon$, 
\begin{equation}
\label{neighbourhood free from spectrum}
(\lambda_0-\rho, \lambda_0+\rho) \cap \sigma(\hat \A^\epsilon)=\emptyset.
\end{equation}
Due to the stability of the essential spectrum under the introduction of a defect --- see Theorem~\ref{theorem about stability of the essential spectrum} --- formula \eqref{neighbourhood free from spectrum} implies
\begin{equation}
\label{neighbourhood free from essential spectrum}
(\lambda_0-\rho, \lambda_0+\rho) \cap \sigma_\mathrm{ess}(\A^\epsilon)=\emptyset.
\end{equation}
By combining \eqref{neighbourhood free from essential spectrum} and \eqref{one way almost conclusion} we arrive at \eqref{main theorem 1 equation}.

The remainder of this section is devoted to the proof of Theorem~\ref{main theorem 1 reformulated}.

\subsection{A key technical estimate}
\label{A key techincal estimate}

Let 
$
a^\epsilon:H^1(\R^d) \times H^1(\R^d) \to \R
$
be the positive definite symmetric bilinear form associated with the operator $\A^\epsilon+\mathrm{Id}$,
\begin{equation}
\label{definition bilinear form}
\begin{split}
a^\epsilon(u,v):=\int_{\textcolor{black}{\mathcal{D}}} A_2 \nabla u \cdot \nabla v + \int_{\textcolor{black}{\mathcal{M}}^\epsilon} A_1 \nabla u \cdot \nabla v +\epsilon^2 \int_{\textcolor{black}{\mathcal{I}}^\epsilon}\nabla u \cdot \nabla v +\int_{\R^d} uv.
\end{split}
\end{equation}

To begin with we modify $u^\epsilon$ by adding the first order homogenisation corrector. 
%Recall that $\mathcal X$ denotes the completion of $\mathcal V_{\rm pot}$  in $L^2(\Omega\setminus\O)$, see Appendix~\ref{Differential calculus in probability space}. 
Let $\textcolor{black}{\overline{p}_j\in\mathcal V^2_{\rm pot}(\Omega)}$ be the solution to the problem
\color{black}
\begin{equation*}
\E\left[(A_1(e_j+{p}_j) \cdot \nabla\varphi)(1-1_\Omega)\right]=0 \qquad \forall \overline{\varphi} \in C^\infty(\Omega),
\end{equation*}
\color{black}
\color{black}
\color{black}
where $e_j$, $j=1,\ldots,d$, is the standard basis in $\R^d$. 
\color{black}
The existence of $\textcolor{black}{\overline{p}_j\in\mathcal V^2_{\rm pot}(\Omega)}$ is guaranteed by \cite[\S~8.1]{ZKO} combined with the extension result \cite[Lemma~D.3]{CCV2}. 
\color{black}

\textcolor{black}{Then} for every $L$ and $\epsilon$ there exists $N_j^\epsilon\in H^1(B_L(0))$ such that
\begin{subequations}
\begin{equation}
\label{corrector property 1}
\nabla N_j^\epsilon(x)=p_j(\textcolor{black}{x/\epsilon,\omega}) \qquad x\in B_L(0).
\end{equation}
One can choose $N_j^\epsilon$ so that
\begin{equation}
\label{corrector property 2}
\int_{B_L(0)}N_j^\epsilon =0.
\end{equation}
By  the Ergodic Theorem~\ref{ergodic theorem} we have the  convergence
\begin{equation*}
	\nabla N_j^\e \rightharpoonup \textcolor{black}{\E[\overline{p}_j]} = 0 \mbox{ weakly in } L^2(B_L(0)).
\end{equation*}
The latter implies the bound
\begin{equation}\label{corrector property 3a}
	\|\nabla N_j^\e\|_{L^2(B_L(0))} \leq C,
\end{equation}
and, together with \eqref{corrector property 2} --- convergence to zero of the corrector:
\begin{equation}
\label{corrector property 3}
\lim_{\epsilon\to 0}\|N_j^\epsilon\|_{L^2(B_L(0))}=0.
\end{equation}
\end{subequations}

We call the functions $N_j^\epsilon$ the \emph{first order homogenisation correctors}.
Note that, in general, $N_j^\epsilon$ can not be represented as the realisation of a function defined in probability space $\Omega$.

Let $\eta: \R^d \to [0,1]$ be an infinitely smooth function such that $\eta|_{B_{\nicefrac{1}{2}}(0)}=1$ and $\operatorname{supp}\eta \subset B_1(0)$.
We denote $\eta_L:=\eta(\,\nicefrac{\cdot}{L})$, thus having
$$|\nabla \eta_L(x)|\le \frac{C}{L} \qquad \forall x\in \R^d.$$

For every sufficiently small $\rho>0$ put
\begin{equation}
\label{definition S2rho}
\textcolor{black}{\mathcal{D}}^\rho:=\{x\in \R^d\ |\ \operatorname{dist}(x,\textcolor{black}{\mathcal{D}})<\rho\}
\end{equation}
and let $\xi_\rho$ be an infinitely smooth cut-off  function satisfying
\begin{enumerate}[(i)]
\item
$0\le |\xi_\rho(x)|\le1$ for all $x\in \R^d$,

\item 
$\operatorname{supp}\xi_\rho\subset \R^d \setminus \textcolor{black}{\mathcal{D}}^\rho$,

\item
$\xi_\rho(x)=1$ for $x\in \R^d\setminus \textcolor{black}{\mathcal{D}}^{2\rho}$,

\item
$|\nabla \xi_\rho(x)|\le \frac{C}{\rho}$ for all $x\in \R^d$.
\end{enumerate}
We define $u^\epsilon_{LC}\in H^1(\R^d)$ as
\begin{equation}
\label{u epsilon with corrector}
\begin{split}
u^\epsilon_{LC}:&=
u^\epsilon+\eta_L \xi_\rho N^\epsilon_j\,\partial_ju_0
\\
&
=
 u_0+
\begin{cases}
\lambda_0 \, b^\epsilon \, u_0+\eta_L\xi_\rho N^\epsilon_j\,\partial_ju_0 & \text{if }x\in \textcolor{black}{\mathcal{I}}^\epsilon(\omega),\\
0 & \text{if }x\in \textcolor{black}{\mathcal{D}},\\
\eta_L \xi_\rho N^\epsilon_j\,\partial_ju_0 & \text{if }x\in \textcolor{black}{\mathcal{M}}^\epsilon(\omega),
\end{cases}
\end{split}
\end{equation}
%Of course,
%\begin{equation}
%\|u_0-\hat u_0\|_{L^\infty(\R^d)}\le \|u_0\|_{L^\infty(S_2^{2\rho})}\le C
%\end{equation}
%and
%\begin{equation}
%\|u_0-\hat u_0\|_{L^2(\R^d)}\le \|u_0\|_{L^2(S_2^{2\rho})}=o(1) \quad \text{as}\quad \rho\to 0^+.
%\end{equation}
The quantity $\rho>0$ is here a free (small) parameter, which will be specified later  as a suitable function of $\epsilon$.  Here and further on we adopt Einstein's summation convention and assume that $L$ is sufficiently large,
%\begin{equation*}
%\label{convenient constraint on L}
%L>2 \sup_{x\in S_2}(|x|+1),
%\end{equation*}
so that $\operatorname{supp}(1-\eta_L)\subset \operatorname{supp} \xi_\rho$.

The two cut-off functions in \eqref{u epsilon with corrector} serve the purpose of ensuring that $u^\epsilon_{LC}\in H^1(\R^d)$; more precisely,
\begin{itemize}
\item $\eta_L$ guarantees that $\eta_L N_j^\epsilon$ is $L^2$-summable,

\item $\xi_\rho$ cuts $u_0$ away from $\partial \textcolor{black}{\mathcal{D}}$, for we do not know whether  the second derivatives of $u_0$ are bounded in $L^\infty$-norm in a neighbourhood of $\partial \textcolor{black}{\mathcal{D}}$, cf. \eqref{regularity of u_0}.
\end{itemize}

The following lemma is the key technical result of this section.

\begin{lemma}
\label{lemma bilinear form}
We have
\begin{equation}
\label{lemma bilinear form equation 1}
|a^\epsilon(u^\epsilon_{LC}-\hat{u}^\epsilon,v)|\le C\,\mathcal{C}(\epsilon, \rho,L) \, \sqrt{a^\epsilon(v,v)} \qquad \forall v\in H^1(\R^d),
\end{equation}
where $C$ is a constant independent of $\epsilon, \rho$ and $L$,
\begin{multline}
\label{lemma bilinear form equation 2}
\mathcal{C}(\epsilon, \rho,L):=
L^{\frac{d-1}{2}}e^{-\theta  L}
+
\epsilon
+
\rho^{\nicefrac{1}{2}}
+
\epsilon \big\|\textstyle\sum_j |\nabla N_j^\epsilon|\big\|_{L^2(B_L(0))}
+
\|B^\epsilon\|_{L^2(B_L(0))}
\\
+\Big(\sup_j\|\nabla \partial_ju_0\|_{L^\infty(\R^d\setminus \textcolor{black}{\mathcal{D}}^\rho)}+\frac1L+\frac1\rho  \Big)\left(
\big \|\textstyle\sum_j|N_j^\epsilon|\big\|_{L^2(B_L(0))}
+
\big \|\textstyle\sum_j|G_j^\epsilon|_F\big\|_{L^2(B_L(0))}
\right)
\end{multline}
and
\begin{itemize}
\item
$\theta$ is as in \eqref{3.9a}--\eqref{3.10};

\item $\textcolor{black}{\mathcal{D}}^{\rho}$ is defined in accordance with \eqref{definition S2rho};

\item 
$\{N_j^\epsilon\}_{j=1}^d$ are the first order correctors 
\eqref{corrector property 1}--\eqref{corrector property 3};

\item 
$\{G_j^\epsilon\}_{j=1}^d$ are matrix-valued functions in $H^1(B_L(0);\R^{n\times n})$ satisfying\footnote{Here  $|\cdot|_F$ denotes the Frobenius matrix norm.}
\begin{equation*}
\label{lemma bilinear form equation 3}
\partial_l (G_j^\epsilon)_{lk}=\left(\textcolor{black}{\mathbbm{1}_{\mathcal{M}^\epsilon}} A_1(e_j+\nabla N_j^\epsilon) -A_1^\mathrm{hom} e_j\right)_k \quad \text{in }B_L(0),\, k=1,\ldots,d,
\end{equation*}
\begin{equation}
\label{lemma bilinear form equation 4}
\lim_{\epsilon\to 0} \| |G_j^\epsilon|_F\|_{L^2(B_L(0))}=0;
\end{equation}

\item
$B^\epsilon$ is a vector-valued function in $H^1(B_L(0);\R^d)$ satisfying
\begin{equation}
\label{lemma bilinear form equation 5}
\operatorname{div} B^\epsilon=(1-\textcolor{black}{\mathbbm{1}_{\mathcal{D}}})(\textcolor{black}{\E[ \overline{b}  ]}-b^\epsilon ) \qquad\text{in }B_L(0),
\end{equation}
\begin{equation}
\label{lemma bilinear form equation 6}
\int_{B_L(0)} B^\epsilon  =0, \qquad \lim_{\epsilon\to 0}\|B^\epsilon\|_{L^2(B_L(0))}=0.
\end{equation}
\end{itemize}
\end{lemma}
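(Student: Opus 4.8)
The plan is to test the defining relation $(\A^\epsilon+1)\hat u^\epsilon=(\lambda_0+1)u^\epsilon$ against an arbitrary $v\in H^1(\R^d)$, which gives $a^\epsilon(\hat u^\epsilon,v)=(\lambda_0+1)\int_{\R^d}u^\epsilon v$, and then to compute $a^\epsilon(u^\epsilon_{LC},v)$ directly from the definitions \eqref{definition bilinear form} and \eqref{u epsilon with corrector}, so that the left-hand side of \eqref{lemma bilinear form equation 1} becomes
\[
a^\epsilon(u^\epsilon_{LC},v)-(\lambda_0+1)\int_{\R^d}u^\epsilon v.
\]
The zeroth-order identities that should be exploited to cancel the bulk of the terms are the two equations \eqref{eigenvalue Ahom part 1}--\eqref{eigenvalue Ahom part 2} defining the homogenised eigenfunction, repackaged as the macroscopic equation \eqref{3.3} with Zhikov's $\beta$-function; recall also $u^0(x,T_{x/\epsilon}\omega)=u^\epsilon$ and the explicit form \eqref{3.2} of $u_1$. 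The role of the homogenisation corrector $\eta_L\xi_\rho N_j^\epsilon\,\partial_j u_0$ added in \eqref{u epsilon with corrector} is precisely to make the flux $\chi_1^\epsilon A_1(e_j+\nabla N_j^\epsilon)\partial_j u_0$ appear, which differs from the homogenised flux $A_1^\mathrm{hom}\nabla u_0$ by the divergence of $G_j^\epsilon$; similarly the potential $b^\epsilon$ differs from its mean $\langle b\rangle$ by $\dive B^\epsilon$. Integrating these divergence corrections by parts against $v$ moves one derivative onto $v$ and $\partial_j u_0$ (or moves the $\eta_L\xi_\rho$ cut-off derivatives), producing terms controlled by $\|G_j^\epsilon\|_{L^2}$, $\|B^\epsilon\|_{L^2}$, $\|N_j^\epsilon\|_{L^2}$ times the gradient estimates on $u_0$ and on the cut-offs, against $\|\nabla v\|$; all such terms go into $\mathcal{C}(\epsilon,\rho,L)$.

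Concretely, I would organise the error into five groups. First, terms supported in the inclusions $S_0^\epsilon$: there $u^\epsilon_{LC}=u_0(1+\lambda_0 b^\epsilon)+\eta_L\xi_\rho N_j^\epsilon\partial_j u_0$, and the contribution $\epsilon^2\int_{S_0^\epsilon}\nabla u^\epsilon_{LC}\cdot\nabla v$ splits into a main part $\epsilon^2\lambda_0\int_{S_0^\epsilon}u_0\nabla b^\epsilon\cdot\nabla v$ — which, using the cell equation $-\Delta_\O b=\lambda_0 b+1$ realised in physical space and integration by parts, reproduces exactly the term needed to match the $\lambda_0\int u^\epsilon v$ on inclusions up to $O(\epsilon)$ errors estimated via Lemma~\ref{lemma ub} — plus genuinely $O(\epsilon)$ and corrector remainders. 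Second, terms supported in the defect $S_2$, where $u^\epsilon_{LC}=u_0$ and one simply uses \eqref{eigenvalue Ahom part 1} restricted to $S_2$. Third, the main matrix contribution on $S_1^\epsilon$, where $A_1^\epsilon\nabla u^\epsilon_{LC}=A_1(\nabla u_0+\eta_L\xi_\rho\nabla N_j^\epsilon\partial_j u_0+\ldots)$; writing $A_1(e_j+\nabla N_j^\epsilon)\partial_j u_0 = A_1^\mathrm{hom}\nabla u_0+\dive G_j^\epsilon\,\partial_j u_0$ on $B_L(0)$ and integrating by parts, the $A_1^\mathrm{hom}\nabla u_0$ part is absorbed by \eqref{3.3}, the $\dive G_j^\epsilon$ part and the derivatives falling on $\partial_j u_0$, $\eta_L$, $\xi_\rho$ produce the $\|G_j^\epsilon\|_{L^2}(\sup\|\nabla\partial_j u_0\|_{L^\infty(\R^d\setminus S_2^\rho)}+1/L+1/\rho)$ terms. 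Fourth, the passage from $\beta(\lambda_0)\int_{\R^d\setminus S_2}u_0 v$ (which came out of \eqref{3.3}) to $(\lambda_0+1)\int u^\epsilon v$ on the matrix: the difference involves $\langle b\rangle - b^\epsilon$ weighted by $u_0 v$, and writing $(1-\chi_2)(\langle b\rangle-b^\epsilon)=\dive B^\epsilon$ and integrating by parts gives the $\|B^\epsilon\|_{L^2}$ term and another $\|B^\epsilon\|_{L^2}$-type term with a gradient on $u_0 v$. Fifth, the exponential tail: everything is genuinely localised only through $u_0$ and $\nabla u_0$, which decay like $e^{-\theta|x|}$ by \eqref{3.9}--\eqref{3.10}, so the part of the integrals outside $B_L(0)$ — in particular the region where the corrector has been cut off by $1-\eta_L$ — is bounded by $L^{(d-1)/2}e^{-\theta L}$ via the shell decomposition used in Lemma~\ref{lemma ub}.

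Throughout, every estimate is of the schematic form $|\text{error term}|\le C\,(\text{one of the quantities in }\mathcal{C})\cdot\|\nabla v\|_{L^2}$ or $\cdot\|v\|_{L^2}$, both of which are bounded by $\sqrt{a^\epsilon(v,v)}$ since $a^\epsilon$ is coercive (it contains $\int_{\R^d}v^2$ and the positive-definite gradient terms, albeit with the degenerate $\epsilon^2$ weight on $S_0^\epsilon$ — crucially the corrector $\nabla b^\epsilon$ on $S_0^\epsilon$ always appears paired with a compensating $\epsilon$, so the $\epsilon^2$-weighted Cauchy--Schwarz still closes). The functions $G_j^\epsilon$ and $B^\epsilon$ exist because the right-hand sides of their defining equations have zero mean on $B_L(0)$ in the limit (by the Ergodic Theorem and the corrector equation), so one solves a Poisson-type problem on $B_L(0)$ and obtains \eqref{lemma bilinear form equation 4}, \eqref{lemma bilinear form equation 6} from the weak $L^2$-convergence to zero of their data together with compactness, exactly as \eqref{corrector property 3} was obtained for $N_j^\epsilon$. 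The main obstacle, I expect, is the bookkeeping near $\partial S_2$: the cut-off $\xi_\rho$ is indispensable because $\nabla\partial_j u_0$ is not known to be $L^\infty$ up to $\partial S_2$ (only \eqref{regularity of u_0} is available there), yet introducing $\xi_\rho$ creates new error terms supported in the shell $S_2^{2\rho}\setminus S_2^\rho$ of width $\sim\rho$ carrying a factor $1/\rho$ from $\nabla\xi_\rho$; these must be shown to contribute only $O(\rho^{1/2})$, which works because the shell has measure $O(\rho)$ and the integrands there are bounded using only \eqref{regularity of u_0} and Lemma~\ref{lemma ub}, and because $\nabla N_j^\epsilon$, while not small pointwise, is bounded in $L^2(B_L(0))$ by \eqref{corrector property 3a}. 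Balancing these competing $\rho$ powers against the $1/\rho$ factors and keeping the $\epsilon$-dependence explicit so that $\rho=\rho(\epsilon)$ can later be tuned is the delicate part; everything else is a careful but routine integration-by-parts accounting.
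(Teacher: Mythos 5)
Your overall architecture matches the paper's proof closely: you correctly identify the division into error groups, the role of $-\Delta_\O b=\lambda_0 b+1$, the potentials $G_j^\epsilon$ and $B^\epsilon$ as divergences of the flux discrepancy and of $(1-\chi_2)(\langle b\rangle-b^\epsilon)$, the $\beta$-function repackaging via \eqref{3.3}, the shell decomposition from Lemma~\ref{lemma ub} for the exponential tail, and the $\rho^{1/2}$-vs.-$1/\rho$ balance near $\partial S_2$. However, there is one missing device that is not a bookkeeping detail but the hinge on which the entire argument turns: the paper introduces the harmonic extension $\tilde v^\epsilon$ of $v|_{\R^d\setminus S_0^\epsilon}$ into the inclusions (via Theorem~\ref{extension theorem}) and the splitting $v=v_0^\epsilon+\tilde v^\epsilon$ with $v_0^\epsilon\in W^{1,2}_0(S_0^\epsilon)$. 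This does two things you cannot do without it. First, when you integrate by parts $\epsilon^2\int_{S_0^\epsilon}\lambda_0 u_0\nabla b^\epsilon\cdot\nabla v$ directly against $v$, you generate boundary terms $\epsilon^2\int_{\partial S_0^\epsilon}u_0\,\partial_\nu b^\epsilon\,v$ which are $O(1)$ (since $|\nabla b^\epsilon|\sim\epsilon^{-1}$ and the total surface area of the inclusions scales like $\epsilon^{-1}$) and require delicate trace control on $v$; testing against $v_0^\epsilon$ instead kills the boundary term outright, and the leftover piece $\epsilon^2\int_{S_0^\epsilon}\lambda_0 u_0\nabla b^\epsilon\cdot\nabla\tilde v^\epsilon$ is a genuine $O(\epsilon)$ remainder because $\|\nabla\tilde v^\epsilon\|_{L^2(\R^d)}\le C\sqrt{a^\epsilon(v,v)}$ uniformly in $\epsilon$. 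Second, the macroscopic equation \eqref{3.3} must be tested against a global $W^{1,2}(\R^d)$ function, and the correct choice is $\tilde v^\epsilon$ (not $v$), since this is what the matrix-part flux $A_1^\mathrm{hom}\nabla u_0\cdot\nabla\tilde v^\epsilon$ plugs into without leaving uncontrolled remainders on the inclusions.

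Relatedly, your closing remark that ``$\|\nabla v\|_{L^2}$ ... is bounded by $\sqrt{a^\epsilon(v,v)}$'' is false uniformly in $\epsilon$: from \eqref{definition bilinear form} one only gets $\|v\|_{L^2(\R^d)}+\|\nabla v\|_{L^2(\R^d\setminus S_0^\epsilon)}+\epsilon\|\nabla v\|_{L^2(S_0^\epsilon)}\le C\sqrt{a^\epsilon(v,v)}$, which is weaker by a factor $\epsilon^{-1}$ in the inclusions. The estimate that makes every remainder closeable is \eqref{proof lemma bilinear 4}, namely $\|\tilde v^\epsilon\|_{L^2}+\|\nabla\tilde v^\epsilon\|_{L^2}+\epsilon\|\nabla v\|_{L^2}\le C\sqrt{a^\epsilon(v,v)}$ — and the uniform $L^2$-control of the \emph{full} gradient is only available for the extension $\tilde v^\epsilon$, not for $v$. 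Once this device is inserted, your five-group decomposition goes through exactly as in the paper.
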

\begin{remark}
	Note that the constant $C$ in \eqref{lemma bilinear form equation 1} depends on $\l_0$ through the $L^\infty$-norm of $u_0$ and $\nabla u_0$, and $\mathcal{C}(\epsilon, \rho,L)$ depends on $\l_0$ through $\theta$, $B^\e$ and $\nabla \partial_j u_0, j=1,\dots,d$.
\end{remark}
\begin{remark}
	It is straightforward to see from \eqref{corrector property 3a}, \eqref{corrector property 3},   \eqref{lemma bilinear form equation 4}, and  \eqref{lemma bilinear form equation 6}   that 
	\begin{equation}\label{3.35}
 \lim_{L\to\infty} \lim_{\rho\to 0} \lim_{\e\to 0} \mathcal{C}(\epsilon, \rho,L) =0.
	\end{equation}
\end{remark}
\begin{proof}
Let $v$ be an arbitrary element in $H^1(\R^d)$. Consider the quantity
\begin{equation}
\label{proof lemma bilinear 1}
\begin{split}
a^\epsilon(u^\epsilon_{LC},v)&= \int_{\textcolor{black}{\mathcal{D}}} A_2\nabla u_0 \cdot \nabla v + \epsilon^2  \int_{\textcolor{black}{\mathcal{I}^\epsilon}} \nabla u^\epsilon_{LC} \cdot \nabla v +\int_{\textcolor{black}{\mathcal{M}^\epsilon}} A_1 \nabla u^\epsilon_{LC} \cdot \nabla v  + \int_{\R^d} u^\epsilon_{LC} \,v.
\end{split}
\end{equation}
%where $A_1^\epsilon:=A_1(T_{\cdot/\epsilon}\omega)$.
For convenience, let us denote
\begin{equation}
\label{proof lemma bilinear 1a}
\mathcal{T}_0^\epsilon:= \epsilon^2  \int_{\textcolor{black}{\mathcal{I}^\epsilon}} \nabla u^\epsilon_{LC} \cdot \nabla v
\end{equation}
and
\begin{equation}
\label{proof lemma bilinear 1b}
\mathcal{T}_1^\epsilon:=\int_{\textcolor{black}{\mathcal{M}^\epsilon}} A_1 \nabla u^\epsilon_{LC} \cdot \nabla v.
\end{equation}
The first step of the proof consists in analysing \eqref{proof lemma bilinear 1a} and \eqref{proof lemma bilinear 1b}. To this end, let us denote by $\tilde{v}^\epsilon$ an extension of $\left.v\right|_{\R^d\setminus \textcolor{black}{\mathcal{I}^\epsilon}}$ into $\textcolor{black}{\mathcal{I}^\epsilon}$ satisfying
\begin{subequations}
\begin{equation}
\label{proof lemma bilinear 2}
\left.\tilde{v}^\epsilon\right|_{\R^d\setminus \textcolor{black}{\mathcal{I}^\epsilon}}=\left.v\right|_{\R^d\setminus \textcolor{black}{\mathcal{I}^\epsilon}}, \qquad \left.\Delta \tilde{v}^\epsilon\right|_{\textcolor{black}{\mathcal{I}^\epsilon}}=0,
\end{equation}
\begin{equation}
\label{proof lemma bilinear 3}
\|\nabla \tilde{v}^\epsilon\|_{L^2(\R^d)}\le C \| \nabla v\|_{L^2(\R^d \setminus \textcolor{black}{\mathcal{I}^\epsilon})}, \qquad \|\tilde{v}^\epsilon\|_{L^2(\R^d)}\le C' \|v\|_{H^1(\R^d\setminus \textcolor{black}{\mathcal{I}^\epsilon})},
\end{equation}
\end{subequations}
whose existence is guaranteed by Theorem~\ref{extension theorem}.  Note that the constants $C,C'$ in \eqref{proof lemma bilinear 3} are independent of $v$ and $\epsilon$.
It is easy to see that the functions $v$ and $\tilde{v}^\epsilon$ satisfy the estimate
\begin{equation}
\label{proof lemma bilinear 4}
\|\tilde{v}^\epsilon\|_{L^2(\R^d)}+\|\nabla \tilde{v}^\epsilon\|_{L^2(\R^d)}+\epsilon\|\nabla v\|_{L^2(\R^d)}\le C \sqrt{a^\epsilon(v,v)}.
\end{equation}
It is also easy to see that the function $v_0^\epsilon:=v-\tilde{v}^\epsilon\in H^1_0(\textcolor{black}{\mathcal{I}^\epsilon})$ satisfies
\begin{equation}
\label{proof lemma bilinear 5}
\| v^\epsilon_0 \|_{L^2(\R^d)}\le C \epsilon \|\nabla v_0^\epsilon\|_{L^2(\R^d)}, \qquad \|\nabla v^\epsilon_0\|_{L^2(\R^d)}\le C\|\nabla v\|_{L^2(\R^d)}.
\end{equation}

We are now in a position to examine \eqref{proof lemma bilinear 1a}. We have
\begin{equation}
\label{proof lemma bilinear 6bis}
\begin{split}
\mathcal{T}_0^\epsilon
=
&
\epsilon^2\int_{\textcolor{black}{\mathcal{I}^\epsilon}} \nabla \left(u_0 (1+\lambda_0 \,b^\epsilon )+ \eta_L\,\xi_\rho\,N^\epsilon_j\,\partial_ju_0  \right)\cdot \nabla v 
\\
=
&
\epsilon^2\int_{\textcolor{black}{\mathcal{I}^\epsilon}}(1+\lambda_0\,b^\epsilon )\nabla  u_0 \cdot \nabla v
+
\epsilon^2\int_{\textcolor{black}{\mathcal{I}^\epsilon}}  \lambda_0 \,u_0 \nabla b^\epsilon \cdot \nabla(v_0^\epsilon+\tilde{v}^\epsilon)
\\
&
+
\epsilon^2\int_{\textcolor{black}{\mathcal{I}^\epsilon}} \left[N^\epsilon_j\,(\eta_L\xi_\rho\nabla\partial_ju_0+\xi_\rho\,(\partial_ju_0)\nabla\eta_L+\eta_L (\partial_ju_0) \nabla \xi_\rho)
+  
\eta_L\,\xi_\rho \,(\partial_ju_0) \nabla N^\epsilon_j\right]\cdot \nabla v.
\end{split}
\end{equation}
Integrating by parts and using the identity
\begin{equation*}
\label{proof lemma bilinear 6}
-\epsilon^2 \Delta b^\epsilon =\lambda_0 b^\epsilon  +1
\end{equation*}
we obtain
\begin{equation*}
\begin{split}
\epsilon^2\int_{\textcolor{black}{\mathcal{I}^\epsilon}}  \lambda_0 \,u_0 \nabla b^\epsilon \cdot \nabla v_0^\epsilon
=
\lambda_0\int_{\textcolor{black}{\mathcal{I}^\epsilon}} \, u_0 (\lambda_0\,b^\epsilon  +1)\, v^\epsilon_0
-
\epsilon^2\lambda_0\int_{\textcolor{black}{\mathcal{I}^\epsilon}}   \nabla  u_0\cdot \nabla b^\epsilon \,v^\epsilon_0.
\end{split}
\end{equation*}

In view of Lemma~\ref{lemma ub} and \eqref{proof lemma bilinear 5},  we have
\begin{equation*}
\label{new proof temp1}
\left|\epsilon^2\int_{\textcolor{black}{\mathcal{I}^\epsilon}}(1+\lambda_0 \, b^\epsilon )\nabla u_0\cdot \nabla v\right|
\le 
C\epsilon^2
\|\nabla v\|_{L^2(\R^d)},
\end{equation*}
%\begin{equation}
%\label{new proof temp2}
%\left|\epsilon^2\int_{S_0^\epsilon}\lambda_0\, b ^\epsilon u_0\nabla \xi_\rho \cdot \nabla v \right|
%\le 
%\frac{C\epsilon^2}\rho \|\nabla v\|_{L^2(S_2^{2\rho})},
%\end{equation}
\begin{equation*}
\label{new proof temp3}
\left|\epsilon^2\lambda_0\int_{\textcolor{black}{\mathcal{I}^\epsilon}}   \nabla u_0\cdot \nabla b^\epsilon \,v^\epsilon_0  \right|\le C \epsilon \|v_0^\epsilon\|_{L^2(\R^d)},
\end{equation*}
\begin{equation*}
\label{new proof temp4}
\left|\epsilon^2\lambda_0\int_{\textcolor{black}{\mathcal{I}^\epsilon}}   u_0 \nabla b^\epsilon \cdot \nabla \tilde{v}^\epsilon\right|\le C\epsilon \|\nabla\tilde v^\epsilon\|_{L^2(\R^d)},
\end{equation*}
\begin{multline*}
\label{new proof temp5}
\left|
\epsilon^2\int_{\textcolor{black}{\mathcal{I}^\epsilon}}N^\epsilon_j\,(\eta_L\xi_\rho\nabla\partial_ju_0+\xi_\rho\,(\partial_ju_0)\nabla\eta_L+\eta_L (\partial_ju_0) \nabla \xi_\rho)\cdot \nabla v
\right|
\\
\le C\epsilon^2 \left(\sup_j \|\nabla \partial_ju_0\|_{L^\infty(\R^d\setminus \textcolor{black}{\mathcal{D}}^\rho)}+\frac1L +\frac1\rho  \right) \left\|\textstyle\sum_j|N_j^\epsilon|\right\|_{L^2(B_L(0))}\|\nabla v\|_{L^2(\R^d)},
\end{multline*}
\begin{equation*}
\label{new proof temp5}
\left|
\epsilon^2\int_{\textcolor{black}{\mathcal{I}^\epsilon}}
\eta_L\,\xi_\rho \,(\partial_ju_0) \nabla N^\epsilon_j\cdot \nabla v
\right|
\le 
C\epsilon^2 \left \|\textstyle\sum_j|\nabla N_j^\epsilon|\right\|_{L^2(B_L(0))} \|\nabla v\|_{L^2(\R^d)}.
\end{equation*}
Hence, using \eqref{proof lemma bilinear 4} and \eqref{proof lemma bilinear 5} we can recast \eqref{proof lemma bilinear 6bis} as
\begin{equation}
\label{proof lemma bilinear 7}
\mathcal{T}_0^\epsilon=
\lambda_0\int_{\textcolor{black}{\mathcal{I}^\epsilon}}   u_0 (\lambda_0 \,b^\epsilon  +1)\, v^\epsilon_0
+
\mathcal{R}_0^\epsilon,
\end{equation}
with
%\textcolor{black}{[Do we want more details here?]}
\begin{multline*}
\label{proof lemma bilinear 8}
|\mathcal{R}_0^\epsilon|
\le 
C\epsilon
\left[
1
+ 
\left(\sup_j\|\nabla \partial_ju_0\|_{L^\infty(\R^d\setminus \textcolor{black}{\mathcal{D}}^\rho)}+\frac1L+\frac1\rho  \right)\left\|\textstyle\sum_j|N_j^\epsilon|\right\|_{L^2(B_L(0))}
\right.
\\
\left.+
\left \|\textstyle\sum_j|\nabla N_j^\epsilon|\right\|_{L^2(B_L(0))}
\right]
\sqrt{a^\epsilon(v,v)}.
\end{multline*}

Let us move on to $\mathcal{T}_1^\epsilon$. By adding and subtracting 
\begin{equation*}
\label{proof lemma bilinear 9}
\int_{\R^d\setminus \textcolor{black}{\mathcal{D}}} \xi_\rho \,A_1^\mathrm{hom}\nabla u_0 \,\cdot \nabla \tilde{v}^\epsilon
\end{equation*}
from the right-hand side of \eqref{proof lemma bilinear 1b}, we obtain
\begin{equation}
\label{proof lemma bilinear 10}
\begin{split}
\mathcal{T}_1^\epsilon
&
=
\int_{\R^d\setminus \textcolor{black}{\mathcal{D}}} \xi_\rho\,A_1^\mathrm{hom}\nabla u_0 \,\cdot \nabla \tilde{v}^\epsilon+\int_{S_1^\epsilon} (1-\xi_\rho)A_1\nabla u_0\cdot \nabla \tilde v^\epsilon
\\
&
+
\int_{\R^d}  \eta_L\,\xi_\rho \left(\textcolor{black}{\mathbbm{1}_{\mathcal{M}^\epsilon}} A_1(e_j+\nabla N_j^\epsilon) -A_1^\mathrm{hom} e_j\right) \partial_j u_0 \cdot \nabla \tilde{v}^\epsilon
\\
&
+
\int_{\R^d}(1-\eta_L)\xi_\rho \left(\textcolor{black}{\mathbbm{1}_{\mathcal{M}^\epsilon}} A_1 -A_1^\mathrm{hom} \right) \nabla u_0 \cdot \nabla \tilde{v}^\epsilon
%\\
%&
%{\color{black}\text{[Note that $(1-\eta_L)(1-\chi_2)=(1-\eta_L)$ because of \eqref{convenient constraint on L}]}}
\\
&
+
\int_{\textcolor{black}{\mathcal{M}^\epsilon}} \eta_L\,\xi_\rho\,N_j^\epsilon\, \nabla \partial_j u_0\cdot \nabla \tilde{v}^\epsilon +\int_{\textcolor{black}{\mathcal{M}^\epsilon}}N_j^\epsilon\,\partial_j u_0 \nabla (\eta_L\xi_\rho)\cdot \nabla \tilde v^\epsilon.
\end{split}
\end{equation}
%Note that $(1-\chi_2) \xi_\rho$ is smooth and $\nabla[(1-\chi_2) \xi_\rho]=(1-\chi_2) \nabla\xi_\rho$.

By \cite[Corollary~\textcolor{black}{D.5}]{CCV2}, for every $j=1,\ldots,d$ there exists a skew-symmetric matrix-valued function $G_j^\epsilon \in H^1(B_L(0);\R^{d\times d}))$ such that
\begin{equation*}
\label{proof lemma bilinear 11}
\lim_{\epsilon\to 0} \| |G_j^\epsilon|_F\|_{L^2(B_L(0))}=0
\end{equation*}
and
\begin{equation*}
\label{proof lemma bilinear 12}
\partial_l (G_j^\epsilon)_{lk}=\left(\textcolor{black}{\mathbbm{1}_{\mathcal{M}^\epsilon}} A_1(e_j+\nabla N_j^\epsilon) -A_1^\mathrm{hom} e_j\right)_k, \qquad x\in B_L(0).
\end{equation*}
%The difference between here and \cite[Corollary~D.5]{CCV2} is that we artificially extended to zero the quantity
%\[
%\chi_1^\epsilon A_1^\epsilon(e_j+\nabla N_j^\epsilon) -A_1^\mathrm{hom} e_j
%\]
%inside the defect $S_2$ by multiplying it by $1-\chi_2$, to be able to extend to the whole of $\R^d$ the second integral on the right-hand side of \eqref{proof lemma bilinear 10} and avoid dealing with boundary terms.
Integrating by parts and using the skew-symmetry of $G_j^\epsilon$, we get
\begin{equation}
	\label{3.50}
	\begin{split}
			\int_{\R^d}  \eta_L\,\xi_\rho &\left(\textcolor{black}{\mathbbm{1}_{\mathcal{M}^\epsilon}} A_1(e_j+\nabla N_j^\epsilon) -A_1^\mathrm{hom} e_j\right) \partial_j u_0 \cdot \nabla \tilde{v}^\epsilon
\\
&= 
		\int_{\R^d}  \eta_L\,\xi_\rho \, \partial_l (G_j^\epsilon)_{lk}\, \partial_j u_0\, \partial_k \tilde{v}^\epsilon
=
- 	\int_{\R^d}   (G_j^\epsilon)_{lk}\, \partial_k \tilde{v}^\epsilon \, \partial_l (\eta_L\,\xi_\rho \, \partial_j u_0).
	\end{split}
\end{equation}

Recalling \eqref{regularity of u_0}, \eqref{3.10},   using \eqref{3.50} and \eqref{proof lemma bilinear 4} we can rewrite  \eqref{proof lemma bilinear 10} as
%\textcolor{black}{[Do we want more details here?]}
\begin{equation}
\label{proof lemma bilinear 13}
\mathcal{T}_1^\epsilon=
\int_{\R^d} \xi_\rho \,A_1^\mathrm{hom}\nabla u_0 \,\cdot \nabla \tilde{v}^\epsilon 
+
\mathcal{R}_1^\epsilon,
\end{equation}
with
\begin{multline*}
%\label{proof lemma bilinear 14}
|\mathcal{R}_1^\epsilon|\le C\Big[
e^{-\theta L}+
|\textcolor{black}{\mathcal{D}}^{2\rho}\setminus \textcolor{black}{\mathcal{D}}|^{\nicefrac{1}{2}}
+
\\
\Big(\sup_j\|\nabla \partial_ju_0\|_{L^\infty(\R^d\setminus \textcolor{black}{\mathcal{D}}^\rho)}+\frac1L+\frac1\rho  \Big)\Big(
\big \|\textstyle\sum_j|N_j^\epsilon|\big\|_{L^2(B_L(0))}
+
\big \|\textstyle\sum_j|G_j^\epsilon|_F\big\|_{L^2(B_L(0))}
\Big)
\Big]\sqrt{a^\epsilon(v,v)}.
\end{multline*}

It is straightforward to see that
\begin{equation*}
\label{new approach temp1}
\Big| \int_{\R^d\setminus \textcolor{black}{\mathcal{D}}} (1-\xi_\rho)\,A^\mathrm{hom}_1 \nabla u_{0} \cdot \nabla\tilde{v}^\epsilon \Big|\le C |\textcolor{black}{\mathcal{D}}^{2\rho}\setminus \textcolor{black}{\mathcal{D}}|^{\nicefrac{1}{2}}\sqrt{a^\epsilon(v,v)}.
\end{equation*}
Clearly, $|\textcolor{black}{\mathcal{D}}^{2\rho}\setminus \textcolor{black}{\mathcal{D}}|^{1/2}\leq C \rho^{\nicefrac{1}{2}}$. Hence, substituting \eqref{proof lemma bilinear 7}, \eqref{proof lemma bilinear 1a} and \eqref{proof lemma bilinear 13}, \eqref{proof lemma bilinear 1b} into \eqref{proof lemma bilinear 1}, we arrive at
\begin{multline}
\label{proof lemma bilinear 15}
a^\epsilon(u^\epsilon_{LC}, v)=\int_{\R^d\setminus \textcolor{black}{\mathcal{D}}} A^\mathrm{hom}_1 \nabla u_{0} \cdot \nabla\tilde{v}^\epsilon + \lambda_0\int_{\textcolor{black}{\mathcal{I}}^\epsilon} u_0 (\lambda_0 b^\epsilon  +1)\, v^\epsilon_0\\+ \int_{\textcolor{black}{\mathcal{D}}} A_2\nabla u_0 \cdot \nabla v  +\int_{\R^d} u^\epsilon_{LC} v+\mathcal{R}^\epsilon,
\end{multline}
where
\begin{multline}
\label{proof lemma bilinear 16}
|\mathcal{R}^\epsilon|\le C \left[
\epsilon
+
e^{-\theta L}
+
\rho^{\nicefrac{1}{2}}
+
\epsilon\left \|\textstyle\sum_j |\nabla N_j^\epsilon|\right\|_{L^2(B_L(0))}+
\right.
\\
\left.
\left(\sup_j\|\nabla \partial_ju_0\|_{L^\infty(\R^d\setminus \textcolor{black}{\mathcal{D}}^\rho)}+\frac1L+\frac1\rho  \right)
\left(
\left \|\textstyle \sum_j|N_j^\epsilon|\right\|_{L^2(B_L(0))}
+
\left \| \textstyle\sum_j|G_j^\epsilon|_F\right\|_{L^2(\square_L)}
\right)
\right]\sqrt{a^\epsilon(v,v)}.
\end{multline}
%In writing \eqref{proof lemma bilinear 16} we used that $1+\epsilon$ is bounded above uniformly in $\epsilon$.

Taking into account \eqref{3.3} we rewrite \eqref{proof lemma bilinear 15} as 
\begin{equation*}
	\begin{aligned}\label{3.55}
a^\epsilon(u^\epsilon_{LC}, v) =  \beta(\lambda_0) \int_{\R^d\setminus \textcolor{black}{\mathcal{D}}}u_0 \,\tilde{v}^\epsilon  &+ \lambda_0\int_{\textcolor{black}{\mathcal{I}}^\epsilon} u_0 (\lambda_0 b^\epsilon  +1)\, v^\epsilon_0 
\\
&+\lambda_0 \int_{\textcolor{black}{\mathcal{D}}}u_0 \,v    +\int_{\R^d} u^\epsilon_{LC} v+\mathcal{R}^\epsilon
\end{aligned}
\end{equation*}
Then utilising the identity
\begin{equation*}
	a^\epsilon(\hat{u}^\epsilon,v)=(\lambda_0+1)\int_{\R^d}u^\epsilon v,
\end{equation*}
which follows from  \eqref{u hat epsilon one way}, and recalling \eqref{definition beta function equation 1}, \eqref{29 April 2020 equation 2} and \eqref{u epsilon with corrector}, we arrive at
\begin{equation}
\label{proof lemma bilinear 19}
a^\epsilon(u^\epsilon_{LC}-\hat{u}^\epsilon,v)
= \int_{\R^d\setminus \textcolor{black}{\mathcal{D}}}\lambda^2_0 \,u_0(\textcolor{black}{\E[\overline{b}]}-b^\epsilon )\tilde{v}^\epsilon 
+\int_{\R^d} \eta_L\,\xi_\rho\,N_j^\epsilon\, (\partial_j u_0)\, v+\mathcal{R}^\epsilon.
\end{equation}

It remains  to estimate the two integrals on the right-hand side of \eqref{proof lemma bilinear 19}.
By the Ergodic Theorem, the sequence of functions $(1-\textcolor{black}{\mathbbm{1}_{\mathcal{D}}})(\textcolor{black}{\E[\overline{b}]}-b^\epsilon )$ converges to zero weakly in $L^2(B_L(0))$ as $\epsilon\to 0$. Therefore, \cite[Lemma~\textcolor{black}{D.6}]{CCV2} ensures the existence of a sequence of functions $B^\epsilon\in H^1(B_L(0);\R^d)$
satisfying \eqref{lemma bilinear form equation 5}--\eqref{lemma bilinear form equation 6}. The introduction of the cut-off function $(1-\textcolor{black}{\mathbbm{1}_{\mathcal{D}}})$ will allow us to extend the integrals to the whole of $\R^d$ and avoid dealing with boundary terms when integrating by parts.

Let us decompose the first integral on the right-hand side of \eqref{proof lemma bilinear 19} as
\begin{equation*}
\int_{\R^d\setminus \textcolor{black}{\mathcal{D}}}\lambda^2_0 \,u_0(\textcolor{black}{\E[\overline{b}]}-b^\epsilon )\tilde{v}^\epsilon
=
\int_{\R^d\setminus \textcolor{black}{\mathcal{D}}}\lambda^2_0\, \eta_L \,u_0 (\textcolor{black}{\E[\overline{b}]}-b^\epsilon )\tilde{v}^\epsilon
+
\int_{\R^d\setminus \textcolor{black}{\mathcal{D}}}\lambda^2_0 \,(1-\eta_L)\,u_0 (\textcolor{black}{\E[\overline{b}]}-b^\epsilon )\tilde{v}^\epsilon.
\end{equation*}

Utilising \eqref{proof lemma bilinear 5}, integrating by parts and resorting once again to 
\eqref{proof lemma bilinear 4},
we obtain
\begin{equation}
\label{proof lemma bilinear 24 temp1}
\left|\int_{\R^d\setminus \textcolor{black}{\mathcal{D}}}\lambda^2_0 \eta_L\,u_0(\textcolor{black}{\E[\overline{b}]}-b^\e)\tilde{v}^\epsilon \right|=\left|\lambda_0^2\int_{\R^d} B^\epsilon \cdot \nabla(\eta_L u_0 \tilde{v}^\epsilon) \right|\le C \|B^\epsilon\|_{L^2(B_L(0))} \sqrt{a^\epsilon(v,v)}.
\end{equation}
Arguing as in Lemma~\ref{lemma ub}, it is easy to see that 
\[
\|u_0 (\textcolor{black}{\E[\overline{b}]}-b^\e) \|_{L^2(\R^d\setminus B_{L/2}(0))}\le C L^{\frac{d-1}{2}}e^{-\theta L}.
\]
 Therefore we obtain
\begin{equation}
\label{proof lemma bilinear 24 temp2}
\left| \int_{\R^d\setminus \textcolor{black}{\mathcal{D}}}\lambda^2_0 \,(1-\eta_L)\,u_0 (\textcolor{black}{\E[\overline{b}]}-b^\e)\tilde{v}^\epsilon \right|\le C L^{\frac{d-1}{2}}e^{-\theta L} \sqrt{a^\epsilon(v,v)}.
\end{equation}
Formulae \eqref{proof lemma bilinear 24 temp1} and \eqref{proof lemma bilinear 24 temp2} imply
\begin{equation}
\label{proof lemma bilinear 24}
\left|\int_{\R^d\setminus \textcolor{black}{\mathcal{D}}}\lambda^2_0 \,u_0(\textcolor{black}{\E[\overline{b}]}-b^\e)\tilde{v}^\epsilon \right|\le C\left(\|B^\epsilon\|_{L^2(B_L(0))}+L^{\frac{d-1}{2}}e^{-\theta L} \right)\sqrt{a^\epsilon(v,v)}.
\end{equation}

The estimate of the second integral on the right-hand side of \eqref{proof lemma bilinear 19} is straightforward: 
\begin{equation}
\label{proof lemma bilinear 25}
\left| \int_{\R^d} \eta_L\,\xi_\rho\, N_j^\epsilon\,(\partial_j u_0)\, v \right|\le C \left\|\textstyle \sum_j |N_j^\epsilon|\right\|_{L^2(B_L(0))} \sqrt{a^\epsilon(v,v)}.
\end{equation}

Finally,  combining \eqref{proof lemma bilinear 19}, \eqref{proof lemma bilinear 16}, \eqref{proof lemma bilinear 24} and \eqref{proof lemma bilinear 25} we arrive at \eqref{lemma bilinear form equation 1}--\eqref{lemma bilinear form equation 2}.
\end{proof}

\subsection{Proof of Theorem~{\ref{main theorem 1 reformulated}}}
\label{subsectionp proof main theorem reformulated}

We are now in a position to prove Theorem~\ref{main theorem 1 reformulated}.

\begin{proof}[Proof of Theorem~\ref{main theorem 1 reformulated}]
By choosing $v=u^\epsilon_{LC}-\hat{u}^\epsilon$ in \eqref{lemma bilinear form equation 1} one gets
\begin{equation}
\label{new proof main theorem 1 temp 1}
\|u^\epsilon_{LC}-\hat{u}^\epsilon\|_{L^2(\R^d)}\le \sqrt{a^\epsilon(u^\epsilon_{LC}-\hat{u}^\epsilon,u^\epsilon_{LC}-\hat{u}^\epsilon)}\le C\,\mathcal{C}(\epsilon, \rho,L).
\end{equation}
Comparing \eqref{29 April 2020 equation 2} and 
\eqref{u epsilon with corrector} we have
\begin{equation}
\label{new proof main theorem 1 temp 2}
\|u^\epsilon-u^\epsilon_{LC}\|_{L^2(\R^d)}\le C \,\big \|\textstyle\sum_j|N_j^\epsilon|\big\|_{L^2(B_L(0))}.
\end{equation}
Therefore, from  \eqref{new proof main theorem 1 temp 1} and \eqref{new proof main theorem 1 temp 2} we obtain
\begin{equation}
\label{new proof main theorem 1 temp 3}
\|\hat u^\epsilon-u^\epsilon\|_{L^2(\R^d)}
\le
 C\left( \mathcal{C}(\epsilon, \rho,L)
+
\big\|\textstyle\sum_j|N_j^\epsilon|\big\|_{L^2(B_L(0))}
\right).
\end{equation}
The result follows immediately from   \eqref{corrector property 3} and \eqref{3.35} by choosing in \eqref{u epsilon with corrector} sufficiently large $L$ and sufficiently small $\rho$.
\end{proof}

\begin{remark}
Observe that, unlike previous related works \cite{KS, cherdantsev}, we assume the boundary of the defect to be only $C^{1,\alpha}$. As a result,  we do not have $L^\infty$ control of the components of the Hessian of $u_0$.  However, if one knew that
\begin{equation}
\label{higher regularity for u0}
u_0\in H^1(\R^d)\cap W^{2,\infty}(\R^d),
\end{equation}
then from \eqref{lemma bilinear form equation 2} and \eqref{new proof main theorem 1 temp 3} it would follow that
\begin{multline*}
\label{quantitative estimate prospective}
\|\hat u^\epsilon-u^\epsilon\|_{L^2(\R^d)}\le 
C
\left[ L^{\frac{d-1}{2}}e^{-\theta L}
+
\epsilon
+
|\textcolor{black}{\mathcal{D}}^{2\rho}\setminus\textcolor{black}{\mathcal{D}}|^{1/2}
+
\epsilon \left\|\textstyle\sum_j |\nabla N_j^\epsilon|\right\|_{L^2(B_L(0))}
+
\|B^\epsilon\|_{L^2(B_L(0))}
\right.
\\
\left.
+\left(1+\frac1\rho  \right)\left(
\left \|\textstyle\sum_j|N_j^\epsilon|\right\|_{L^2(B_L(0))}
+
\left \|\textstyle\sum_j|G_j^\epsilon|_F\right\|_{L^2(B_L(0))}
\right)
\right].
\end{multline*}
In particular, this would mean that one can explicitly specify the rate of convergence of $\rho$ to zero in terms of $N_j^\epsilon$ and $G_j^\epsilon$ only, and quantitative estimates for the latter and the term $B^\e$ would translate, when available, into quantitative estimates for the convergence of eigenfunctions. See also Remark~\ref{remark rate of convergence}.

The property \eqref{higher regularity for u0} is certainly guaranteed if the defect $\textcolor{black}{\mathcal{D}}$ has smooth boundary (this is the assumption made in \cite{KS}),  as can be shown by adapting the argument from~\cite[Theorem~5.3.8]{costabel} to the case of smooth interface. In fact, one    can  show that \eqref{higher regularity for u0}  holds under a much weaker regularity assumption on $\partial \textcolor{black}{\mathcal{D}}$, namely, that $\partial \textcolor{black}{\mathcal{D}}$ is of  $C^{2,\alpha}$ class for  $\alpha>0$.  In this case one can show that each component of $u_0$ is in $C^{2,\alpha'}(\R^d)$ for  some $\alpha'$, $0<\alpha'<\alpha$, by pushing the argument of \cite[Theorem~1.1]{vogelius} to higher regularity and combining it with the technique of difference quotients (after localising and flattening the boundary).
\end{remark}

\section{Uniform exponential decay of eigenfunctions}
\label{Uniform exponential decay of eigenfunctions}

The current section and the next are devoted to proving Theorem~\ref{main theorem 2}.

We assume \eqref{section exponential decay equation 1} and \eqref{section exponential decay equation 2}.
It is well known that the eigenfunctions $u^\epsilon$ decay exponentially, namely,  for a fixed $\epsilon$ there exist $\alpha(\epsilon), c(\epsilon)>0$ such that
\begin{equation*}
\label{section exponential decay equation 3}
\|e^{\alpha(\epsilon)|x|}u^\epsilon\|_{L_2(\R^d)} \le c(\epsilon),
\end{equation*}
see, e.g., \cite{AADH, FK}. The goal of this section is to show that the exponential decay of the family $\{u^\epsilon\}$ is, in fact, \emph{uniform in $\epsilon$}. More precisely, we will prove the following theorem, which, in particular, implies Theorem~\ref{main theorem 2} part (a).

\begin{theorem}
\label{theorem uniform exponential decay}
Let $\{\lambda_\epsilon\}$ and $\{u^\epsilon\}$ satisfy \eqref{section exponential decay equation 1} and \eqref{section exponential decay equation 2}.  Let $\tilde{u}^\epsilon$ be an extension\footnote{The existence of such an extension is guaranteed by Theorem~\ref{extension theorem}.} of $\left. u^\epsilon\right|_{\R^d\setminus \textcolor{black}{\mathcal{I}}^\epsilon}$ into $\textcolor{black}{\mathcal{I}}^\epsilon$ satisfying
\begin{subequations}
\begin{equation}
\label{theorem uniform exponential decay equation 1}
\left.\tilde{u}^{\epsilon}\right|_{\R^d \setminus \textcolor{black}{\mathcal{I}}^\epsilon}=\left.u^{\epsilon}\right|_{\R^d \setminus \textcolor{black}{\mathcal{I}}^\epsilon}, \qquad \left.\Delta\tilde{u}^{\epsilon}\right|_{\textcolor{black}{\mathcal{I}}^\epsilon}=0,
\end{equation}
\begin{equation}
\label{theorem uniform exponential decay equation 2}
\|\nabla \tilde{u}^{\epsilon}\|_{L^2(\R^d)}\le C_1 \|\nabla u^{\epsilon}\|_{L^2(\R^d \setminus \textcolor{black}{\mathcal{I}}^\epsilon)},
\end{equation}
\begin{equation}
\label{theorem uniform exponential decay equation 3}
\|\tilde{u}^{\epsilon}\|_{H^1(\R^d)}\le C_2 \,\|u^{\epsilon}\|_{H^1(\R^d \setminus \textcolor{black}{\mathcal{I}}^\epsilon)},
\end{equation}
\end{subequations}
where $C_1$ and $C_2$ are constants independent of $\epsilon$. 
%We also assume that $\tilde{u}^\epsilon$ is chosen in such a way that the estimates \eqref{theorem uniform exponential decay equation 2}--\eqref{theorem uniform exponential decay equation 3} are true for each individual inclusion and its extension domain with the same constants.
Then, for every 
$
0<\alpha<\sqrt{|\beta_\infty(\lambda_0)|/\gamma}
$
there exist $\epsilon_0=\epsilon_0(\omega)>0$ and $C>0$ such that for every $0<\epsilon<\epsilon_0$
we have
\begin{equation}
\label{theorem uniform exponential decay equation 4}
\|e^{\alpha|x|}u^\epsilon\|_{L^2(\R^d)} \le C
\end{equation}
and
\begin{equation}
\label{theorem uniform exponential decay equation 5}
\|e^{\alpha|x|}\tilde{u}^\epsilon\|_{H^1(\R^d)} \le C,
\end{equation}
where $\gamma:=\max_{\lambda\in \sigma(A_1)} \lambda$.
\end{theorem}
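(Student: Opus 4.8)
The plan is to prove the uniform exponential decay by an Agmon-type argument, adapting the strategy already used for $u_0$ in Section~\ref{Proof of main theorem 1} but now carrying out a two-scale analysis to control the high-contrast inclusions. First I would introduce the one-parameter family of weights $\psi_R$ exactly as in \eqref{proof decay equation 5}, satisfying the gradient bound \eqref{proof decay equation 12}, and test the eigenvalue equation \eqref{section exponential decay equation 2} against $\varphi=\psi_R u^\epsilon$. Using the algebraic identity relating $|A^{1/2}\nabla(\psi_R^{1/2}u^\epsilon)|^2$ to $A\nabla u^\epsilon\cdot\nabla(\psi_R u^\epsilon)$ on the matrix $S_1^\epsilon$ (with $A=A_1$) and separately on $S_0^\epsilon$ (with $A=\epsilon^2\,\mathrm{Id}$) and on $S_2$ (with $A=A_2$), I would arrive at an energy identity analogous to \eqref{3.8}, with the crucial difference that the ``soft'' term $\epsilon^2\int_{S_0^\epsilon}|\nabla(\psi_R^{1/2}u^\epsilon)|^2$ must be handled together with the $L^2$ contribution $-\lambda_\epsilon\int\psi_R (u^\epsilon)^2$ coming from the inclusions.

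The heart of the argument is the treatment of the inclusion term: on each rescaled inclusion $\epsilon\O_\omega^k$ the function $u^\epsilon$ satisfies $-\epsilon^2\Delta u^\epsilon=\lambda_\epsilon u^\epsilon$, and the key is to express, up to small errors, the $L^2$-mass of $u^\epsilon$ on $S_0^\epsilon$ in terms of $b^\epsilon_{\lambda_\epsilon}$ and the trace of $u^\epsilon$ on the matrix side, via the identity $-\epsilon^2\Delta b^\epsilon_{\lambda_\epsilon}=\lambda_\epsilon b^\epsilon_{\lambda_\epsilon}+1$ (the same mechanism that produces $\beta$ in \eqref{3.3}). Summing the resulting local contributions over the inclusions contained in a dyadic shell and invoking the definition of $\ell_{\lambda,L}$ in \eqref{definition function j} together with $\beta_\infty(\lambda_0)=\liminf_{L\to\infty}\sup_x\ell_{\lambda_0,L}(x,\omega)$, one sees that, for $L$ large and $\epsilon$ small, the combined bulk coefficient in front of $\psi_R (u^\epsilon)^2$ over the matrix region is bounded above by roughly $\beta_\infty(\lambda_0)+o(1)$; since we work in a gap, $\beta_\infty(\lambda_0)<0$, so this term has a definite sign. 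Combining this with the gradient weight bound, for any $\alpha$ satisfying \eqref{2.3}, i.e.\ $\alpha^2\gamma<|\beta_\infty(\lambda_0)|$, the quantity $-\beta_\infty(\lambda_0)\psi_R-\gamma|\nabla\psi_R^{1/2}|^2$ dominates $c\,\psi_R$ for a positive constant $c$, exactly mirroring the step between \eqref{3.8} and \eqref{3.9a}. The terms supported in $S_2$ and on $\partial B_R$ are bounded uniformly in $R$ (they involve only the fixed-size defect and the already-known decay on compact sets), so letting $R\to\infty$ yields $\|e^{\alpha|x|}u^\epsilon\|_{L^2(\R^d)}\le C$ with $C$ independent of $\epsilon$, which is \eqref{theorem uniform exponential decay equation 4}; feeding this back into the energy identity controls $\|e^{\alpha|x|}\nabla(\psi_R^{1/2}u^\epsilon)\|$ in all three regions, and then the extension estimates \eqref{theorem uniform exponential decay equation 2}--\eqref{theorem uniform exponential decay equation 3} upgrade this to the bound \eqref{theorem uniform exponential decay equation 5} for $\tilde u^\epsilon$ in $H^1$.

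The main obstacle I anticipate is making the passage from the microscopic identity on individual inclusions to the macroscopic coefficient $\beta_\infty$ rigorous and \emph{uniform in $\epsilon$}: one needs quantitative control, analogous to \cite[Lemma~E.2]{CCV2} and the estimates \eqref{proof lemma ub 2}, on how $b^\epsilon_{\lambda_\epsilon}$ and the inclusion-by-inclusion remainders behave when summed over an unbounded region, with $\lambda_\epsilon$ itself varying with $\epsilon$ (so one must also control the dependence of $b_\lambda$ on $\lambda$ near $\lambda_0\notin\sigma(-\Delta_\O)$, which is fine since the resolvent is analytic there). A further subtlety is that, a priori, $u^\epsilon$ is only known to decay exponentially with an $\epsilon$-dependent rate $\alpha(\epsilon)$, so the testing against $\psi_R u^\epsilon$ and the limit $R\to\infty$ must be justified by first working with the bounded weight $\psi_R$ (which is legitimate for each fixed $R$) and only then extracting the $\epsilon$-uniform bound; care is needed that no constant secretly depends on $\alpha(\epsilon)$ or $c(\epsilon)$. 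Finally, the error terms localised near $\partial S_2$ must be shown not to spoil the sign of the bulk term — here one uses that the defect is a fixed compact set and that $\lambda_\epsilon\to\lambda_0$, so these contributions are $O(1)$ and get absorbed into the right-hand side constant.
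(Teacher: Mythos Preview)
Your plan captures the correct overall strategy --- Agmon weight, algebraic identity, reduction to $\beta_\infty$ via $b^\epsilon_{\lambda_\epsilon}$, and the condition $\alpha^2\gamma<|\beta_\infty(\lambda_0)|$ --- but differs from the paper in two organisational choices that are worth flagging. First, the paper tests the eigenvalue equation against $v=\psi_R\,\tilde u^\epsilon$ (the harmonic extension), not $\psi_R\,u^\epsilon$. This is not merely cosmetic: with your choice the soft-inclusion identity produces $\epsilon^2\int_{S_0^\epsilon}|\nabla(\psi_R^{1/2}u^\epsilon)|^2-\lambda_\epsilon\int_{S_0^\epsilon}\psi_R(u^\epsilon)^2$, and to extract the factor $(\lambda+\lambda^2 b^\epsilon)$ you must anyway decompose $u^\epsilon=\tilde u^\epsilon+u_0^\epsilon$ and reduce to quantities in $\tilde u^\epsilon$; testing with $\psi_R\tilde u^\epsilon$ from the outset lands you there directly and gives $\|\nabla(\psi_R^{1/2}\tilde u^\epsilon)\|_{L^2(\R^d\setminus S_0^\epsilon)}$ on the left. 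The paper then further splits $u_0^\epsilon=u_{0,1}^\epsilon+u_{0,2}^\epsilon$ with $u_{0,1}^\epsilon=\lambda_\epsilon b^\epsilon_{\lambda_\epsilon}\langle\tilde u^\epsilon\rangle_{\epsilon\O_\omega^k}$, the Poincar\'e-small remainder $u_{0,2}^\epsilon$ being absorbed as an $O(\epsilon)$ error. Second, the passage to $\beta_\infty$ is not via dyadic shells but via a tiling of $\R^d$ by cubes $\epsilon\square_\xi^L$ with $L$ fixed so that $\sup_x\ell_{\lambda_\epsilon,L}(x,\omega)\le\beta_\infty(\lambda_0)+\delta$ (Lemma~\ref{lemma local beta function}); this is precisely where the $\sup_x$ in the definition of $\beta_\infty$ enters, and it requires the $L^\infty$ bound $\|b^\epsilon_{\lambda_\epsilon}\|_{L^\infty}\le C$ (Corollary~\ref{corollary L infinity estimates}) together with a ``moving $\psi_R$ around'' lemma (Lemma~\ref{lemma about moving psi_R around}) to commute the weight with the extension operator and Poincar\'e inequalities on each cube. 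Consequently the natural order is the reverse of yours: one first obtains $\|\psi_R^{1/2}\tilde u^\epsilon\|_{L^2(\R^d)}+\|\nabla(\psi_R^{1/2}\tilde u^\epsilon)\|_{L^2(\R^d\setminus S_0^\epsilon)}\le C$ uniformly in $R$, whence \eqref{theorem uniform exponential decay equation 5}, and only then derives \eqref{theorem uniform exponential decay equation 4} via $\|u_0^\epsilon\|_{L^2(\epsilon\O_\omega^k)}\le C\|\tilde u^\epsilon\|_{L^2(\epsilon\O_\omega^k)}$ from the resolvent bound. Your anticipated obstacles are the right ones; the tools you need are exactly Lemmata~\ref{lemma local beta function}--\ref{lemma about moving psi_R around}.
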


\subsection{Preparatory lemmata}
\label{Preparatory lemmata}

Before addressing the proof of Theorem~\ref{theorem uniform exponential decay}, which relies on a version of Agmon's operator positivity method \cite{agmon1, agmon2} (whose basic idea is illustrated by the argument leading to the bound \eqref{3.9}) adapted to our setting, in the spirit of \cite{cherdantsev}, we need to state and prove a few preparatory lemmata.

\begin{lemma}
\label{lemma local beta function}
For every $\delta>0$ there exists $L=L(\omega)>0$
%uniform in $x\in \R^d$ and $\omega\in \Omega$ 
such that
\begin{equation}
\label{lemma local beta function equation 1}
\frac{1}{L^d} \int_{\square_x^L} \left( \lambda+\lambda^2\, b_{\lambda}(\textcolor{black}{y,\omega})\right) \,d y \le \beta_\infty(\lambda)+\delta \qquad \forall x\in \R^d
\end{equation}
almost surely.
\end{lemma}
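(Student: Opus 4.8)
The plan is to establish \eqref{lemma local beta function equation 1} directly from Definition~\ref{definition local beta function}, using the almost sure determinism of $\beta_\infty$ together with the structure of the $\liminf$--$\sup$ defining it. Recall that, writing $\ell_{\lambda,L}(x,\omega)$ as in \eqref{definition function j}, we have by definition
\[
\beta_\infty(\lambda)=\liminf_{L\to+\infty}\,\sup_{x\in\R^d}\ell_{\lambda,L}(x,\omega)
\]
for a.e.\ $\omega\in\Omega_t$ (the value being independent of $\omega$ by \cite[Proposition~4.9]{CCV2}). Fix $\delta>0$. By the definition of $\liminf$, there is a sequence $L_n\to+\infty$ along which $\sup_{x\in\R^d}\ell_{\lambda,L_n}(x,\omega)\to\beta_\infty(\lambda)$; in particular one can pick $n$ large enough that $\sup_{x\in\R^d}\ell_{\lambda,L_n}(x,\omega)\le\beta_\infty(\lambda)+\delta$, and then set $L:=L_n$. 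Since $\ell_{\lambda,L}(x,\omega)$ is exactly the left-hand side of \eqref{lemma local beta function equation 1}, this gives the claimed bound for all $x\in\R^d$ simultaneously, and the set of $\omega$ for which it holds has full measure because both the representation of $\beta_\infty$ as this $\liminf$--$\sup$ and its determinism hold almost surely.

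The one subtlety I would be careful about is the \emph{uniformity in $x$}: the inequality \eqref{lemma local beta function equation 1} must hold for \emph{every} $x\in\R^d$ with a single $L$, rather than merely for each $x$ a possibly different $L$. This is why it is essential that the $\sup_{x\in\R^d}$ sits \emph{inside} the $\liminf$ in Definition~\ref{definition local beta function}: once we select $L=L_n$ with $\sup_{x}\ell_{\lambda,L}(x,\omega)\le\beta_\infty(\lambda)+\delta$, the supremum being below this threshold is precisely the statement that every individual $x$ satisfies the bound. So there is really nothing to prove beyond unwinding the definitions — the content of the lemma is essentially a restatement of what $\liminf_{L\to\infty}\sup_x$ means, combined with the known fact that this quantity is almost surely deterministic.

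If one wanted to be slightly more careful about which null set to discard, I would take the almost sure event to be $\Omega_t$ intersected with the full-measure event from \cite[Proposition~4.9]{CCV2} on which $\beta_\infty(\cdot,\omega)\equiv\beta_\infty(\cdot)$; on that event the argument above applies verbatim. No obstacle is anticipated here — this lemma is purely a technical unpacking whose role is to have, for the subsequent Agmon-type positivity argument in the proof of Theorem~\ref{theorem uniform exponential decay}, a concrete scale $L$ at which the averaged quantity $\lambda+\lambda^2 b_\lambda$ is controlled uniformly in space by something arbitrarily close to $\beta_\infty(\lambda)$.
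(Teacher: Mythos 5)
Your argument is logically sound as far as it goes, but it gives a weaker conclusion than what the paper actually establishes, and it understates what the lemma rests on. Unwinding $\liminf_{L\to\infty}\sup_x \ell_{\lambda,L}(x,\omega)$ produces, for each fixed $\omega$, a subsequence $L_n(\omega)\to\infty$ with $\sup_x\ell_{\lambda,L_n}(x,\omega)\to\beta_\infty(\lambda)$, and hence a value $L=L_n(\omega)$ at which the bound holds. The scale $L$ you obtain is therefore a priori $\omega$-dependent (the convergent subsequence is), and you only get the bound along that subsequence, not for all large $L$. The paper's proof instead defines $\overline{\ell}_{\lambda,n}(\omega):=\sup_{y}\ell_{\lambda,n}(y,\omega)$ and invokes \cite[Lemma~4.8]{CCV2}, which supplies the genuine content: for every $\delta>0$ there is an $N$ such that $\overline{\ell}_{\lambda,n}(\omega)\le\beta_\infty(\lambda,\omega)+\delta$ for \emph{all} $n>N$. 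Combined with the a.s.\ determinism of $\beta_\infty$ (your citation of \cite[Proposition~4.9]{CCV2}) this finishes the proof. The point is that $\beta_\infty$ is defined as a $\liminf$, so the nontrivial direction is precisely that $\sup_x\ell_{\lambda,L}(x,\omega)$ is \emph{eventually} below $\beta_\infty(\lambda)+\delta$ — that is, that the $\liminf$ is in effect a limit. That is not a restatement of the definitions; it is a result proved in \cite{CCV2}. Your closing remark that ``there is really nothing to prove beyond unwinding the definitions'' therefore misses the key input. In the immediate application (Section~4.2, where a single $L$ is fixed for a typical $\omega$ throughout the proof of Theorem~\ref{theorem uniform exponential decay}) an $\omega$-dependent $L$ would still suffice, so your version is not fatally flawed for the paper's purposes — but it proves a strictly weaker statement than what the paper asserts, and bypasses the lemma that carries the actual mathematical content.
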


\begin{proof}
For $n\in \mathbb{N}$ let us put
\begin{equation}
\overline{\ell}_{\lambda,n}(\omega):=\sup_{y\in \mathbb{R}^d} \ell_{\lambda, n}(y,\omega),
\end{equation}
where $\ell_{\lambda, n}(x,\omega)$ is defined in accordance with~\eqref{definition function j}. Then, by \cite[Lemma~\textcolor{black}{5.9}]{CCV2}, for every $\delta>0$
there exists $N=N(\omega)\in \mathbb{N}$ such that for every $n>N$ we have
\begin{equation*}
%\label{proof preparatory lemma 1 equation 2}
\frac{1}{n^d} \int_{\square_x^n} \left( \lambda+\lambda^2\, b_{\lambda}(\textcolor{black}{y,\omega})\right) \,d y \le \overline{\ell}_{\lambda,n}(\omega) \le  \beta_\infty(\lambda,\omega)+\delta, \qquad \forall x\in \R^d.
\end{equation*}
The fact that $\beta_\infty$ is deterministic almost surely completes the proof.
\end{proof}

\begin{lemma}
\label{lemma L infinity estimate}
For every $\lambda_0$ and $\delta>0$ such that
\[
(\lambda_0-\delta, \lambda_0+\delta) \cap \sigma(-\Delta_{\textcolor{black}{\Omega}}) =\emptyset
\] 
there exists $C_{\lambda_0,\delta}>0$ such that,  almost surely,  if $u\in \textcolor{black}{H^1_0(\omega^k)}$, $k\in\N$, is a solution of
\begin{equation*}
\label{lemma L infinity estimate equation 1}
-\Delta u=\lambda u +1
\end{equation*}
for $\lambda\in (\lambda_0-\delta, \lambda_0+\delta)$,
then
\begin{equation}
\label{lemma L infinity estimate equation 2}
\|u\|_{L^{\infty}(\textcolor{black}{\omega^k})} \le C_{\lambda_0, \delta},
\end{equation}
uniformly in $k$ and $\omega$.
\end{lemma}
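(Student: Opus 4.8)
The plan is to establish the $L^\infty$ bound by a combination of elliptic regularity and a uniform resolvent estimate, exploiting the fact that the defect-free inclusions $\O_\omega^k$ have uniformly bounded diameter and uniformly controlled geometry through the minimal smoothness constants $(\zeta,N,M)$. First I would observe that the right-hand side $\lambda u + 1$ of the equation $-\Delta_{\O_\omega^k} u = \lambda u + 1$ lies in $L^2(\O_\omega^k)$ with norm controlled by $\|u\|_{L^2(\O_\omega^k)}+|\O_\omega^k|^{1/2}$, and that $|\O_\omega^k|\le C$ uniformly since $\operatorname{diam}\O_\omega^k<\tfrac12$. The key first step is therefore a uniform $L^2$ bound on $u$ itself: since $\lambda$ stays in the compact interval $[\lambda_0-\delta,\lambda_0+\delta]$ which is disjoint from $\sigma(-\Delta_\O)=\overline{\bigcup_k\sigma(-\Delta_{\O_\omega^k})}$ by Theorem~\ref{theorem spectrum of A hom}(a), the operators $-\Delta_{\O_\omega^k}-\lambda$ are uniformly invertible: more precisely $\operatorname{dist}(\lambda,\sigma(-\Delta_{\O_\omega^k}))\ge \operatorname{dist}([\lambda_0-\delta,\lambda_0+\delta],\sigma(-\Delta_\O))=:\kappa>0$ for all $k$ and a.e.\ $\omega$, whence $\|(-\Delta_{\O_\omega^k}-\lambda)^{-1}\|_{L^2\to L^2}\le 1/\kappa$ and thus $\|u\|_{L^2(\O_\omega^k)}\le \kappa^{-1}|\O_\omega^k|^{1/2}\le C_{\lambda_0,\delta}$.

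Next I would upgrade this to an $L^\infty$ bound. Since $\O_\omega^k$ is open, bounded, connected and (via $\B_\omega^k\setminus\overline{\O_\omega^k}$ minimally smooth, together with the extension property of Theorem~\ref{extension theorem}) has a uniformly controlled boundary, standard interior and boundary elliptic estimates — e.g.\ De Giorgi–Nash–Moser / Stampacchia $L^\infty$ estimates for weak solutions of $-\Delta v = f$ with $f\in L^q$, $q>d/2$, as in \cite[Theorem~8.15 and Theorem~8.25]{GT} — give $\|u\|_{L^\infty(\O_\omega^k)}\le C\big(\|u\|_{L^2(\O_\omega^k)}+\|\lambda u+1\|_{L^q(\O_\omega^k)}\big)$, where $q$ can be taken large (indeed $f\in L^\infty$ here once we iterate, but $q=2$ suffices after one Moser step when $d\le 4$; in general one bootstraps). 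The constant in this estimate depends only on $d$, the diameter bound $\tfrac12$, and the minimal smoothness constants $(\zeta,N,M)$ — all uniform in $k$ and $\omega$ — because one can cover $\overline{\O_\omega^k}$ by a uniformly bounded number of balls of radius comparable to $\zeta$ in each of which the boundary is a Lipschitz graph with seminorm $\le M$. Feeding in the uniform $L^2$ bound from the first step and $|\O_\omega^k|\le C$ then yields \eqref{lemma L infinity estimate equation 2} with $C_{\lambda_0,\delta}$ depending only on $\lambda_0,\delta$ and the fixed geometric data.

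The main obstacle, and the point requiring care, is the \emph{uniformity} in $k$ and $\omega$ of the elliptic $L^\infty$ estimate: a priori the De Giorgi–Nash–Moser constant depends on the domain, and here there are countably many domains $\O_\omega^k$ varying with $\omega$. The resolution is exactly the uniform minimal smoothness hypothesis in Assumption~\ref{main assumption}(b): it ensures the boundary of each $\O_\omega^k$ can be flattened by bi-Lipschitz maps with constants depending only on $(\zeta,N,M)$, and combined with $\operatorname{diam}\O_\omega^k<\tfrac12$ this makes the Sobolev and Poincaré constants, hence the Moser iteration constants, uniform. One also needs the spectral gap $\kappa>0$ to be uniform, which is immediate from the characterisation of $\sigma(-\Delta_\O)$ in Theorem~\ref{theorem spectrum of A hom}(a) as the closure of $\bigcup_k\sigma(-\Delta_{\O_\omega^k})$: the hypothesis $(\lambda_0-\delta,\lambda_0+\delta)\cap\sigma(-\Delta_\O)=\emptyset$ is precisely what forces every $-\Delta_{\O_\omega^k}$ to have spectrum at distance $\ge\kappa$ from the interval. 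Once these two uniformities are in place the rest is routine, and I would assemble the argument as: uniform spectral gap $\Rightarrow$ uniform $L^2$ resolvent bound $\Rightarrow$ (via uniform elliptic regularity) uniform $L^\infty$ bound.
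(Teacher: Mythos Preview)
Your proposal is correct and follows essentially the same two-step strategy as the paper: a uniform $L^2$ resolvent bound (the paper cites \cite[Lemma~E.2]{CCV2} for this) followed by Moser iteration to reach $L^\infty$. The difference lies in how uniformity in $k,\omega$ of the Moser constant is secured. You argue via boundary regularity: cover $\partial\O_\omega^k$ by finitely many balls, flatten by bi-Lipschitz maps with constants controlled by $(\zeta,N,M)$, and invoke boundary elliptic estimates. The paper instead exploits the Dirichlet condition directly: since $u\in W^{1,2}_0(\O_\omega^k)$, one extends $u$ by zero outside $\O_\omega^k$, translates so that the inclusion sits in the unit cube $[-\nicefrac12,\nicefrac12]^d$, and runs Moser iteration on the fixed cube, where the only inputs needed are the Poincar\'e inequality and Sobolev embeddings --- both on a single reference domain. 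This sidesteps any dependence on the boundary geometry of the individual inclusions and makes uniformity automatic without appealing to $(\zeta,N,M)$ at all. Your route is valid but does more work than necessary; the zero-extension trick is the cleaner way to get uniformity for Dirichlet problems on domains of bounded diameter.
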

\begin{proof}
%If $\lambda_0+\delta<0$ the claim follows from \cite[Theorem~8.16]{GT}. \textcolor{black}{[Matteo: This will probably disappear.]}

%For the remaining values of $\lambda_0$ and $\delta$ we distinguish two cases. 

%We distinguish two cases.

The estimate \eqref{lemma L infinity estimate equation 2} can be obtained by means of a Moser's iteration argument retracing --- with minor modifications --- the proof of \cite[Theorem~2.6]{bensoussan}, with account of \cite[Lemma~\textcolor{black}{D.8}]{CCV2} and Assumption~\ref{main assumption}. That \eqref{lemma L infinity estimate equation 2} is also uniform in $k$ and $\omega$ is guaranteed by Assumption~\ref{main assumption}.  Indeed,  upon extending $u$ by zero outside $\textcolor{black}{\omega^k}$ and translating $\textcolor{black}{\omega^k}$ so that it fits within the unit cube $[-\nicefrac{1}{2},\nicefrac{1}{2}]^d$,  performing the above argument one only requires the Poincar\'e inequality and Sobolev embedding theorems applied in the unit cube.
%
%In dimension $d=2$ the above argument does not immediately apply, due to the critical exponent in the Sobolev embedding. In this case one argues as follows. Let $u$ be a solution of \eqref{lemma L infinity estimate equation 1} and define
%\begin{equation*}
%U(x,x_3):=x_3(1-x_3) u(x),
%\end{equation*}
%where $(x,x_3)$ are Cartesian coordinates in $\O_\omega^k \times [0,1]\subset \R^3$. In view of \eqref{lemma L infinity estimate equation 1}, it is easy to see that $U\in W^{1,2}_0(\O_\omega^k \times [0,1])$ is a solution of 
%\begin{equation*}
%-\Delta_{\R^3}U=\lambda U +2 u +x_3(1-x_3).
%\end{equation*}
%Since $\|u+x_3(1-x_3)\|_{L^2(\O_\omega^k\times [0,1])}$ is uniformly bounded with respect to $\omega$ and $k$ by \cite[Lemma~E.2]{CCV2} and Assumption~\ref{main assumption}, and since $2>3/2$, we can apply the argument from the proof of \cite[Theorem~2.6]{bensoussan} to the function $U$ defined on a bounded subset of $\R^3$.
%To complete the proof it remains only to observe that $\|u\|_{L^\infty(\O_\omega^k)}\le 4 \|U\|_{L^\infty(\O_\omega^k\times [0,1])}$ and that Assumption~\ref{assumption number of shapes intersecting the boundary} implies that $\O_\omega^k\times[0,1]$ are Lipschitz equivalent to a finite number of subsets of $\R^3$. Thus, arguing as above, the $L^{\infty}$ estimate we obtain is uniform in $k$ and $\omega$.
\end{proof}

\begin{corollary}
\label{corollary L infinity estimates}
The realisation $b^\epsilon_{\lambda_\epsilon}$ satisfies the estimate
\begin{equation}
\label{corollary L infinity estimates equation 1}
\|b_{\lambda_\epsilon}^\epsilon\|_{L^{\infty}(\textcolor{black}{\omega^k})} \le C,
\end{equation}
uniformly in $\epsilon$, $\omega$ and $k$.
%\begin{equation}
%\label{corollary L infinity estimates equation 2}
%\|b_{\lambda_\epsilon}\|_{L^{\infty}(\square_L(\xi))} \le C \left( L^{d/2}+1 \right).
%\end{equation}
\end{corollary}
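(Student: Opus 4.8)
The plan is to deduce Corollary~\ref{corollary L infinity estimates} directly from Lemma~\ref{lemma L infinity estimate} by identifying the objects correctly. Recall from the hypotheses \eqref{section exponential decay equation 1} that $\lambda_\epsilon \to \lambda_0 \notin \mathcal{G}$; since $\sigma(-\Delta_\O) \subset \mathcal{G}$ by \eqref{definition g theta not}, the limit point $\lambda_0$ lies at positive distance from $\sigma(-\Delta_\O)$. Hence there exists $\delta>0$ with $(\lambda_0-\delta,\lambda_0+\delta)\cap\sigma(-\Delta_\O)=\emptyset$, and for all $\epsilon$ small enough we have $\lambda_\epsilon\in(\lambda_0-\delta,\lambda_0+\delta)$.

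First I would recall the definition of $b_\lambda$ as the solution of $(-\Delta_\O - \lambda)b_\lambda = 1$ in probability space, and observe that its realisation $b^\epsilon_{\lambda}(x) = b_\lambda(T_{x/\epsilon}\omega)$, when restricted to a single scaled inclusion $\epsilon\O^k_\omega$, solves (after rescaling $x\mapsto x/\epsilon$) the inclusion-wise problem $-\Delta_{\O^k_\omega} u = \lambda u + 1$ on $\O^k_\omega$ with $u\in W^{1,2}_0(\O^k_\omega)$. This is precisely the equation appearing in Lemma~\ref{lemma L infinity estimate}, and the $L^\infty$ norm is scale-invariant under this rescaling. The decomposition $\sigma(-\Delta_\O)=\overline{\bigcup_k \sigma(-\Delta_{\O^k_\omega})}$ from Theorem~\ref{theorem spectrum of A hom}(a) guarantees that $\lambda_\epsilon\notin\sigma(-\Delta_{\O^k_\omega})$ for every $k$, so the inclusion-wise problem is well posed for a.e.\ $\omega$.

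Then I would invoke Lemma~\ref{lemma L infinity estimate} with this fixed $\delta$ and with $\lambda=\lambda_\epsilon\in(\lambda_0-\delta,\lambda_0+\delta)$: it yields a constant $C_{\lambda_0,\delta}$, independent of $k$ and $\omega$, such that $\|b_{\lambda_\epsilon}\|_{L^\infty(\O^k_\omega)}\le C_{\lambda_0,\delta}$. Crucially $C_{\lambda_0,\delta}$ depends only on $\lambda_0$ and $\delta$, not on the particular value $\lambda_\epsilon$ in that interval, so it is uniform in $\epsilon$ once $\epsilon$ is small enough that $\lambda_\epsilon$ lands in $(\lambda_0-\delta,\lambda_0+\delta)$. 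Since $\|b^\epsilon_{\lambda_\epsilon}\|_{L^\infty(\epsilon\O^k_\omega)}=\|b_{\lambda_\epsilon}\|_{L^\infty(\O^k_\omega)}$, this gives \eqref{corollary L infinity estimates equation 1}.

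I do not expect a serious obstacle here, since this is essentially a bookkeeping corollary; the only point requiring a little care is verifying that $\lambda_0\notin\mathcal{G}$ indeed forces a uniform gap around $\sigma(-\Delta_\O)$ valid for all small $\epsilon$ simultaneously (as opposed to an $\epsilon$-dependent gap), but this follows immediately from $\lambda_\epsilon\to\lambda_0$ and the closedness of $\sigma(-\Delta_\O)$. A secondary subtlety is the passage from the realisation $b^\epsilon_{\lambda_\epsilon}$ to the abstract $b_{\lambda_\epsilon}$ restricted to an inclusion: one should note that by the structure of the dynamical system and Assumption~\ref{main assumption}, the trace of $b_{\lambda_\epsilon}$ on each physical inclusion $\epsilon\O^k_\omega$ corresponds, after the change of variables $y = x/\epsilon$, to a solution of the inclusion-wise equation on the reference inclusion $\O^k_\omega$, for a.e.\ $\omega \in \Omega_t$. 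With these identifications in place the corollary is immediate.
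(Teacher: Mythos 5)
Your proposal is correct and follows exactly the paper's intended route: the paper simply states that the corollary ``follows in a straightforward manner from Lemma~\ref{lemma L infinity estimate}'', and you have filled in the bookkeeping --- identifying $\delta$ from $\lambda_0 \notin \mathcal{G} \supset \sigma(-\Delta_\O)$, placing $\lambda_\epsilon$ in $(\lambda_0-\delta,\lambda_0+\delta)$ for small $\epsilon$, and noting that $b^\epsilon_{\lambda_\epsilon}$ restricted to each scaled inclusion is, after the scale-invariant change of variables, precisely a solution of the equation in Lemma~\ref{lemma L infinity estimate}. Nothing further to add.
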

\begin{proof}
The estimate \eqref{corollary L infinity estimates equation 1} follows in a straightforward manner from Lemma~\ref{lemma L infinity estimate}.
\end{proof}

Recall the function $\psi_R$ defined by \eqref{proof decay equation 5}.
In the next section we will need the following technical estimates, which are required for fiddling  with the exponential term $\psi_R$ in the variational formulation of the eigenvalue problem \eqref{section exponential decay equation 2} with test function of the form  $\psi_R \widetilde{u}^\e$, and which follow from the basic observation that for every (scaled) extension domain $\epsilon \B_\omega^k$ one has, by Assumption \ref{main assumption},
\begin{equation}\label{4.11}
 \sup_{ \epsilon \B_\omega^k} \psi_R^{1/2} \le  e^{\alpha \e\sqrt{d}}\inf_{ \epsilon \B_\omega^k} \psi_R^{1/2}.
\end{equation}

\begin{lemma}
\label{lemma about moving psi_R around}
\begin{enumerate}[(a)]
\item
For every $k\in \N^\epsilon(\omega)$ and $R>0$ the following estimate holds:
\begin{multline}
\label{lemma about moving psi_R around equation 1}
\|\tilde{u}^\epsilon\|_{L^2(\epsilon\textcolor{black}{\omega^k})}
\|\nabla(\psi_R \tilde{u}^\epsilon)\|_{L^2(\epsilon\textcolor{black}{\omega^k})}
\\
+
\|\nabla \tilde{u}^\epsilon \|_{L^2(\epsilon\textcolor{black}{\omega^k})}
\|\nabla(\psi_R \tilde{u}^\epsilon)\|_{L^2(\epsilon\textcolor{black}{\omega^k})} 
+
\|\nabla \tilde{u}^\epsilon\|_{L^2(\epsilon\textcolor{black}{\omega^k})} \|  \psi_R \tilde{u}^\epsilon  \|_{L^2(\epsilon \textcolor{black}{\omega^k})}
\\
\le 
C
\left(
\| \psi_R^{1/2}\tilde{u}^\epsilon\|_{L^2(\epsilon \B_\omega^k)}^2
+
\| \nabla(\psi_R^{1/2}\tilde{u}^\epsilon)\|_{L^2(\epsilon \B_\omega^k\setminus\epsilon \textcolor{black}{\omega^k})}^2
\right).
\end{multline}
%\begin{equation}
%\label{lemma about moving psi_R around equation 2}
%\|\nabla \tilde{u}^\epsilon \|_{L^2(\epsilon\O_\omega^k)}
%\|\nabla(\psi_R \tilde{u}^\epsilon)\|_{L^2(\epsilon\O_\omega^k)} 
%\le 
%C
%\left(
%\| \psi_R^{1/2}\tilde{u}^\epsilon\|_{L^2(\epsilon \B_\omega^k)}
%+
%\| \nabla(\psi_R^{1/2}\tilde{u}^\epsilon)\|_{L^2(\epsilon \B_\omega^k\setminus\epsilon \O_\omega^k)}
%\right),
%\end{equation}
%\begin{equation}
%\label{lemma about moving psi_R around equation 3}
% \|\nabla \tilde{u}^\epsilon\|_{L^2(\epsilon \O_\omega^k)} \|  \psi_R \tilde{u}^\epsilon  \|_{L^2(\epsilon \O_\omega^k)}
%\le 
%C
%\left(
%\| \psi_R^{1/2}\tilde{u}^\epsilon\|_{L^2(\epsilon \B_\omega^k)}
%+
%\| \nabla(\psi_R^{1/2}\tilde{u}^\epsilon)\|_{L^2(\epsilon \B_\omega^k\setminus\epsilon \O_\omega^k)}
%\right).
%\end{equation}

\item
Let $\square_x^L$ be defined as in \eqref{definition box centred at x} and suppose that $L>2$. Then for every $x\in\R^d$ we have
\begin{multline}
\label{lemma about moving psi_R around equation 4}
\|\psi_R^{1/2}\tilde{u}^\epsilon\|_{L^2(\epsilon\square_x^L)}\|\nabla(\psi_R^{1/2}\tilde{u}^\epsilon)\|_{L^2(\epsilon\square_x^L)}
\\
\le 
C\left(\|\psi_R^{1/2}\tilde{u}^\epsilon\|_{L^2(\epsilon\square_x^{2L})}^2+\|\nabla(\psi_R^{1/2}\tilde{u}^\epsilon)\|_{L^2(\epsilon\square_x^{2L}\setminus \textcolor{black}{\mathcal{D}}^\epsilon)}^2 \right).
\end{multline}
The constant $C$ in \eqref{lemma about moving psi_R around equation 4} is independent of $L$, $R$ and $\epsilon$.
\end{enumerate}
\end{lemma}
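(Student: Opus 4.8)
\textbf{Proof sketch for Lemma~\ref{lemma about moving psi_R around}.}
The plan is to exploit the crucial quasi-constancy property \eqref{4.11}: on each (scaled) extension domain $\epsilon\B_\omega^k$, the weight $\psi_R^{1/2}$ varies by at most the factor $e^{\alpha\epsilon\sqrt d}$, which is bounded uniformly for $\epsilon<\epsilon_0$. The idea is therefore to ``freeze'' $\psi_R$ on each inclusion-plus-annulus patch, replacing it by a constant up to a controlled multiplicative error, and then to use the harmonicity of $\tilde u^\epsilon$ inside $\epsilon\O_\omega^k$ together with the extension estimates of Theorem~\ref{extension theorem} to pass from quantities on $\epsilon\O_\omega^k$ to quantities on the annulus $\epsilon\B_\omega^k\setminus\epsilon\O_\omega^k$.

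For part (a), I would first write, on a fixed $k\in\N^\epsilon(\omega)$, $\nabla(\psi_R\tilde u^\epsilon)=\psi_R\nabla\tilde u^\epsilon+(\nabla\psi_R)\tilde u^\epsilon$ and bound $|\nabla\psi_R|\le 2\alpha\psi_R^{1/2}|\nabla\psi_R^{1/2}|\le C\psi_R$ pointwise (using \eqref{proof decay equation 12}); hence each of the three products on the left of \eqref{lemma about moving psi_R around equation 1} is controlled by $C\,\sup_{\epsilon\B_\omega^k}\psi_R\cdot\big(\|\tilde u^\epsilon\|_{L^2(\epsilon\O_\omega^k)}^2+\|\nabla\tilde u^\epsilon\|_{L^2(\epsilon\O_\omega^k)}^2\big)$, where I have used Cauchy--Schwarz and $ab\le\tfrac12(a^2+b^2)$. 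By \eqref{4.11} we may replace $\sup_{\epsilon\B_\omega^k}\psi_R$ by $C\inf_{\epsilon\B_\omega^k}\psi_R$, so the right-hand side is dominated by $C\big(\|\psi_R^{1/2}\tilde u^\epsilon\|_{L^2(\epsilon\O_\omega^k)}^2+\|\psi_R^{1/2}\nabla\tilde u^\epsilon\|_{L^2(\epsilon\O_\omega^k)}^2\big)$. The first term is already of the desired form. For the second, since $\tilde u^\epsilon$ is harmonic in $\epsilon\O_\omega^k$ and $\psi_R^{1/2}$ is (up to the uniform factor) constant there, one has $\|\psi_R^{1/2}\nabla\tilde u^\epsilon\|_{L^2(\epsilon\O_\omega^k)}\le C\inf_{\epsilon\B_\omega^k}\psi_R^{1/2}\,\|\nabla\tilde u^\epsilon\|_{L^2(\epsilon\O_\omega^k)}$, and then the extension bound of Theorem~\ref{extension theorem} (rescaled: $\|\nabla\tilde u^\epsilon\|_{L^2(\epsilon\O_\omega^k)}\le C\|\nabla u^\epsilon\|_{L^2(\epsilon\B_\omega^k\setminus\epsilon\O_\omega^k)}$) together with reinstating $\psi_R^{1/2}$ via \eqref{4.11} gives the term $\|\nabla(\psi_R^{1/2}\tilde u^\epsilon)\|_{L^2(\epsilon\B_\omega^k\setminus\epsilon\O_\omega^k)}^2$ (the extra $(\nabla\psi_R^{1/2})\tilde u^\epsilon$ contribution being absorbed into the first term by \eqref{proof decay equation 12}). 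This yields \eqref{lemma about moving psi_R around equation 1}.

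For part (b), the strategy is to localise: write $\epsilon\square_x^L$ as covered by a bounded-overlap family of the scaled patches $\epsilon\B_\omega^k$ for inclusions meeting $\square_x^L$, plus the ``matrix'' part, all contained in $\epsilon\square_x^{2L}$ once $L>2$ (using Assumption~\ref{main assumption}(b)(iii), which ensures each inclusion and its associated box fit in a unit cube, so no patch meeting $\square_x^L$ escapes $\square_x^{2L}$). On the matrix part $\epsilon\square_x^L\setminus S_0^\epsilon$ the estimate is immediate by Cauchy--Schwarz and $ab\le\tfrac12(a^2+b^2)$ since there $\nabla(\psi_R^{1/2}\tilde u^\epsilon)$ is already measured on the set appearing on the right; on each inclusion patch one invokes part (a) (with its right-hand side over $\epsilon\B_\omega^k$, absorbed into $\epsilon\square_x^{2L}$), the key point being that the annuli $\epsilon\B_\omega^k\setminus\epsilon\O_\omega^k$ lie inside $\epsilon\square_x^{2L}\setminus S_0^\epsilon$. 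Summing over $k$ and using the finite-overlap property $N$ from minimal smoothness gives \eqref{lemma about moving psi_R around equation 4} with a constant depending only on $\zeta,N,M,d$, hence independent of $L,R,\epsilon$.

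\textbf{Main obstacle.} The delicate point is the bookkeeping in part (b): one must be careful that the inclusion-patches $\epsilon\B_\omega^k$ associated to inclusions intersecting $\square_x^L$ are genuinely contained in $\square_x^{2L}$ (this is where the condition $L>2$ and the normalisation in Assumption~\ref{main assumption}(b)(iii) enter) and that the overlaps of these patches are uniformly bounded, so that summing the local estimates from part (a) does not accumulate an $L$-dependent constant. The harmonicity step in part (a) — controlling $\|\nabla\tilde u^\epsilon\|_{L^2(\epsilon\O_\omega^k)}$ by the annulus gradient — is standard via Theorem~\ref{extension theorem}, but one must track the scaling factor $\epsilon$ carefully to see it cancels.
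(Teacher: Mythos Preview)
Your proposal is correct and matches the paper's approach: both rely on the quasi-constancy \eqref{4.11} to freeze $\psi_R$ on each patch, the extension bound \eqref{theorem uniform exponential decay equation 2} to pass from the inclusion to the annulus, and for (b) the containment $\epsilon\B_\omega^k\subset\epsilon\square_x^{2L}$ when $L>2$. Two minor remarks: harmonicity is not invoked directly (only the extension estimate is used), and for (b) the paper applies the intermediate bound \eqref{proof decay equation 30} rather than part (a) in full---which is indeed what your argument actually needs, since the left-hand side of (a) involves $\psi_R$ rather than $\psi_R^{1/2}$.
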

\begin{proof}
(a) From \eqref{proof decay equation 12} we have
\begin{equation}
\label{proof decay equation 25}
\begin{split}
\|\nabla(\psi_R \tilde{u}^\epsilon)\|_{L^2(\epsilon \textcolor{black}{\omega^k})} 
&
\le 
\|\nabla(\psi_R^{1/2})\, \psi_R^{1/2}\tilde{u}^\epsilon\|_{L^2(\epsilon \textcolor{black}{\omega^k})}+\|\psi_R^{1/2}\nabla(\psi_R^{1/2} \tilde{u}^\epsilon)\|_{L^2(\epsilon \textcolor{black}{\omega^k})}
\\
&
\le
C  \Big( \sup_{ \epsilon \textcolor{black}{\omega^k}} \psi_R^{1/2} \Big) 
\left[ \| \psi_R^{1/2}\tilde{u}^\epsilon\|_{L^2(\epsilon \textcolor{black}{\omega^k})}+\|\nabla(\psi_R^{1/2} \tilde{u}^\epsilon)\|_{L^2(\epsilon \textcolor{black}{\omega^k})}
\right].
\end{split}
\end{equation}
From \eqref{4.11} it easily follows that 
\begin{equation}
\label{proof decay equation 27}
 \Big( \sup_{ \epsilon \textcolor{black}{\omega^k}} \psi_R^{1/2}\Big) \|\tilde{u}^\epsilon\|_{L^2(\epsilon \textcolor{black}{\omega^k})} \le C \|\psi_R^{1/2}\tilde{u}^\epsilon\|_{L^2(\epsilon \textcolor{black}{\omega^k})}.
\end{equation}
Combining \eqref{proof decay equation 25} and \eqref{proof decay equation 27} and using some elementary algebra we get
\begin{equation}
\label{proof decay equation 28}
\begin{split}
\|\tilde{u}^\epsilon\|_{L^2(\epsilon \textcolor{black}{\omega^k})}
\|\nabla(\psi_R \tilde{u}^\epsilon)\|_{L^2(\epsilon \textcolor{black}{\omega^k})}
\le 
C
\left( \| \psi_R^{1/2}\tilde{u}^\epsilon\|_{L^2(\epsilon \textcolor{black}{\omega^k})}^2+\|\nabla(\psi_R^{1/2} \tilde{u}^\epsilon)\|_{L^2(\epsilon \textcolor{black}{\omega^k})}^2
\right).
\end{split}
\end{equation}

Finally, employing the bounds \eqref{4.11} and \eqref{theorem uniform exponential decay equation 2}, we obtain
\begin{equation}
\label{proof decay equation 30}
\begin{split}
\|\nabla(\psi_R^{1/2} \tilde{u}^\epsilon)\|_{L^2(\epsilon \textcolor{black}{\omega^k})} 
&
\le
C  
\Big( \sup_{ \epsilon \textcolor{black}{\omega^k}} \psi_R^{1/2}\Big)\left( \| \tilde{u}^\epsilon\|_{L^2(\epsilon \textcolor{black}{\omega^k})} + \| \nabla\tilde{u}^\epsilon\|_{L^2(\epsilon \B_\omega^k\setminus\epsilon \textcolor{black}{\omega^k})}\right)
\\
&
\le 
C  
\left( \| \psi_R^{1/2}\tilde{u}^\epsilon\|_{L^2(\epsilon \textcolor{black}{\omega^k})} + \| \nabla(\psi_R^{1/2}\tilde{u}^\epsilon)-(\nabla\psi_R^{1/2}) \tilde{u}^\epsilon\|_{L^2(\epsilon \B_\omega^k\setminus\epsilon \textcolor{black}{\omega^k})}\right)
\\
&
\le 
C
\left(
\| \psi_R^{1/2}\tilde{u}^\epsilon\|_{L^2(\epsilon \B_\omega^k)}
+
\| \nabla(\psi_R^{1/2}\tilde{u}^\epsilon)\|_{L^2(\epsilon \B_\omega^k\setminus\epsilon \textcolor{black}{\omega^k})}
\right).
\end{split}
\end{equation}

Combining \eqref{proof decay equation 28} with \eqref{proof decay equation 30} we obtain the desired bound for the first term on the left-hand side of \eqref{lemma about moving psi_R around equation 1}. The remaining two terms can be easily estimated in a similar manner.

(b) Note that for every inclusion  $\epsilon\textcolor{black}{\omega^k}$ that has non-empty intersection with the cube $\epsilon\square_x^L$ the corresponding extension domain is contained in the larger cube: $\epsilon \B_\omega^k\subset \epsilon\square_x^{2L}$. Then the bound \eqref{lemma about moving psi_R around equation 4} follows by applying the inequality \eqref{proof decay equation 30} on each inclusion satisfying $\epsilon\textcolor{black}{\omega^k}\cap \epsilon\square_x^L \neq \emptyset$.
\end{proof}

\subsection{Proof of Theorem~{\ref{theorem uniform exponential decay}}}
\label{subsection proof of theorem uniform exponential decay}

%We will first prove the bound \eqref{theorem uniform exponential decay equation 5} for $0<\alpha<\sqrt{|\beta_\infty(\lambda_0)|/\gamma}$ and then, by means of a bootstrap argument,  prove
%\eqref{theorem uniform exponential decay equation 4}--\eqref{theorem uniform exponential decay equation 5} 
%for all $0<\alpha<\sqrt{|\beta(\lambda_0)|/\gamma}$. Recall that $\beta_\infty(\lambda_0)\le \beta(\lambda_0)$,  see Section~\ref{Statement of the problem}.

The variational formulation of the eigenvalue problem \eqref{section exponential decay equation 2} reads
\begin{equation}
\label{proof decay equation 1}
\int_{\textcolor{black}{\mathcal{D}}} A_2 \nabla u^{\epsilon}\cdot \nabla v + \epsilon^2 \int_{\textcolor{black}{\mathcal{I}}^\epsilon} \nabla u^{\epsilon} \cdot \nabla v +\int_{\textcolor{black}{\mathcal{M}}^\epsilon} A_1 \nabla u^{\epsilon} \cdot \nabla v= \lambda_{\epsilon} \int_{\R^d} u^{\epsilon} v, \qquad \forall v \in H^1(\R^d).
\end{equation}
By choosing $v=u^{\epsilon}$ in \eqref{proof decay equation 1} and using the positive-definiteness of $A_1$ and \eqref{section exponential decay equation 1}, we obtain the \emph{a priori} estimate
\begin{equation}
\label{proof decay equation 2}
\|\nabla u^{\epsilon} \|_{L^2(\textcolor{black}{\mathcal{D}})}+\epsilon\|\nabla u^{\epsilon} \|_{L^2(\textcolor{black}{\mathcal{I}}^\epsilon)}+\|\nabla u^{\epsilon} \|_{L^2(\textcolor{black}{\mathcal{M}}^\epsilon)}\le  C,
\end{equation}
which implies, in particular,
\begin{equation}
\label{proof decay equation 3}
\|u^{\epsilon}\|_{H^1(\R^d \setminus \textcolor{black}{\mathcal{I}}^\epsilon)}\le C.
\end{equation}
While one does not have a uniform $H^1$-bound for the eigenfunction $u^\e$, whose gradient in the inclusions is  of order $\e^{-1}$, one has a uniform $H^1$-bound for  the extension $\tilde{u}^\epsilon$. Indeed, from  \eqref{theorem uniform exponential decay equation 3} we immediately get
\begin{equation}
\label{proof decay equation 4}
\|\tilde{u}^{\epsilon}\|_{H^1(\R^d)}\le C.
\end{equation}
%To study the decay properties of our family of eigenfunctions, 
%let us put
%\begin{equation}
%\label{proof decay equation 5}
%\psi_R:=\chi_R \,e^{2\alpha|x|}+(1-\chi_R)\, e^{2\alpha R},
%\end{equation}
%where $\chi_R$ is the characteristic function of the ball $B_R(0)$,
%and 

Consider  the test function
\begin{equation}
\label{proof decay equation 6}
v=\psi_R\,\tilde{u}^\epsilon \in H^1(\R^d),
\end{equation}
where $\psi_R$ is define in accordance with \eqref{proof decay equation 5}.
Plugging \eqref{proof decay equation 6} into \eqref{proof decay equation 1}  and using \eqref{theorem uniform exponential decay equation 1}, we obtain
\begin{multline}
\label{proof decay equation 7}
\int_{\textcolor{black}{\mathcal{D}}} A_2 \nabla \tilde{u}
^{\epsilon}\cdot \nabla (\psi_R \tilde{u}^\epsilon) + \epsilon^2 \int_{\textcolor{black}{\mathcal{I}}^\epsilon} \nabla u^{\epsilon} \cdot \nabla (\psi_R \tilde{u}^\epsilon) +\int_{\textcolor{black}{\mathcal{M}}^\epsilon} A_1 \nabla \tilde{u}^{\epsilon} \cdot \nabla (\psi_R \tilde{u}^\epsilon)
\\
=
  \lambda_{\epsilon} \int_{\textcolor{black}{\mathcal{M}}^\epsilon \cup \textcolor{black}{\mathcal{D}}}  \psi_R \,(\tilde{u}^\epsilon)^2
  +
   \lambda_{\epsilon} \int_{\textcolor{black}{\mathcal{I}}^\epsilon} u^{\epsilon} \psi_R \tilde{u}^\epsilon.
\end{multline}
With the help of the algebraic identity
\begin{equation*}
\label{proof decay equation 8}
|A^{1/2}\nabla(\psi_R^{1/2}\tilde{u}^\epsilon)|^2=|A^{1/2}\nabla(\psi_R^{1/2})|^2(\tilde{u}^\epsilon)^2+A \nabla\tilde{u}^\epsilon\cdot \nabla(\psi_R \tilde{u}^\epsilon) \quad \forall A=A^T \in GL(d,\R),
\end{equation*}
we can recast \eqref{proof decay equation 7} as
\begin{multline}
\label{proof decay equation 9}
\int_{\textcolor{black}{\mathcal{D}}} |A_2^{1/2}\nabla(\psi_R^{1/2}\tilde{u}^\epsilon)|^2 +\int_{\textcolor{black}{\mathcal{M}}^\epsilon} |A_1^{1/2}\nabla(\psi_R^{1/2}\tilde{u}^\epsilon)|^2
\\
+ 
\epsilon^2 \int_{\textcolor{black}{\mathcal{I}}^\epsilon} \nabla u^{\epsilon} \cdot \nabla (\psi_R \tilde{u}^\epsilon) 
 -
\int_{\textcolor{black}{\mathcal{M}}^\epsilon} |A_1^{1/2}\nabla(\psi_R^{1/2})|^2(\tilde{u}^\epsilon)^2
\\
=
 \lambda_{\epsilon} \int_{\textcolor{black}{\mathcal{D}}}  \psi_R\,( \tilde{u}^\epsilon)^2+
\int_{\textcolor{black}{\mathcal{D}}} |A_2^{1/2}\nabla(\psi_R^{1/2})|^2(\tilde{u}^\epsilon)^2
\\
+
\lambda_{\epsilon} \int_{\textcolor{black}{\mathcal{M}}^\epsilon} \psi_R\, (\tilde{u}^\epsilon)^2
+
\lambda_{\epsilon} \int_{\textcolor{black}{\mathcal{I}}^\epsilon} u^{\epsilon} \psi_R \tilde{u}^\epsilon.
\end{multline}

In view of \eqref{proof decay equation 4} and \eqref{proof decay equation 12}, the first two integrals on the right-hand side of \eqref{proof decay equation 9} can be estimated as follows:
\begin{equation}
\label{proof decay equation 10}
\lambda_{\epsilon} \int_{\textcolor{black}{\mathcal{D}}}  \psi_R\,( \tilde{u}^\epsilon)^2 + \int_{\textcolor{black}{\mathcal{D}}} |A_2^{1/2}\nabla(\psi_R^{1/2})|^2(\tilde{u}^\epsilon)^2 \le C(1+ \alpha^2 \operatorname{tr}(A_2)) \sup_{x\in \textcolor{black}{\mathcal{D}}} e^{2\alpha |x|}.
\end{equation}
%Direct inspection tells  us
%\begin{equation*}
%\label{proof decay equation 12}
%|\nabla\psi_R^{1/2}|^2\le \alpha^2 \psi_R,
%\end{equation*}
%so that 
Furthermore,
\begin{equation}
\label{proof decay equation 13}
\int_{\textcolor{black}{\mathcal{M}}^\epsilon} |A_1^{1/2}\nabla(\psi_R^{1/2})|^2(\tilde{u}^\epsilon)^2\le \gamma \alpha^2 \|\psi_R^{1/2}\tilde{u}^\epsilon\|_{L^2(\R^d)}^2,
\end{equation}
where $\gamma:=\max_{\lambda\in \sigma(A_1)} \lambda$ is the greatest eigenvalue of the positive-definite matrix $A_1$.

In view of \eqref{proof decay equation 10}, \eqref{proof decay equation 13}, and the positive definiteness of the matrices $A_1$ and $A_2$, we can turn the identity \eqref{proof decay equation 9} into the inequality
\begin{multline}
\label{proof decay equation 17}
c\,\|\nabla(\psi_R^{1/2}\tilde{u}^\epsilon)\|_{L^2(\R^d\setminus \textcolor{black}{\mathcal{I}}^\epsilon)}^2+\epsilon^2 \int_{\textcolor{black}{\mathcal{I}}^\epsilon} \nabla u^{\epsilon} \cdot \nabla (\psi_R \tilde{u}^\epsilon)-\gamma \alpha^2 \|\psi_R^{1/2}\tilde{u}^\epsilon\|_{L^2(\R^d)}^2
\\
\le 
C +\lambda_{\epsilon} \int_{\textcolor{black}{\mathcal{M}}^\epsilon} \psi_R\, (\tilde{u}^\epsilon)^2
+
\lambda_{\epsilon} \int_{\textcolor{black}{\mathcal{I}}^\epsilon} u^{\epsilon} \psi_R \tilde{u}^\epsilon,
\end{multline}
where  positive constants $c$ and $C$ are independent of $\epsilon$ and $R$.

The next step consists in estimating the remaining integrals in \eqref{proof decay equation 17}. To this end, we put
\begin{equation}
\label{proof decay equation 18}
\mathcal{I}_0:=\Big|\epsilon^2 \int_{\textcolor{black}{\mathcal{I}}^\epsilon} \nabla u^{\epsilon} \cdot \nabla (\psi_R \tilde{u}^\epsilon)\Big|,
\end{equation}
\begin{equation}
\label{proof decay equation 19}
\mathcal{I}_1:=\lambda_{\epsilon} \int_{\textcolor{black}{\mathcal{M}}^\epsilon} \psi_R\, (\tilde{u}^\epsilon)^2
+
\lambda_{\epsilon} \int_{\textcolor{black}{\mathcal{I}}^\epsilon} u^{\epsilon} \psi_R \tilde{u}^\epsilon,
\end{equation}
and analyse $\mathcal{I}_0$ and $\mathcal{I}_1$ separately.

Let us denote 
\begin{equation}
\label{proof decay equation 20a}
u^\epsilon_0:=u^\epsilon-\tilde{u}^\epsilon \in H^1_0(\textcolor{black}{\mathcal{I}}^\epsilon). 
\end{equation}
On account of \eqref{proof decay equation 1} and \eqref{theorem uniform exponential decay equation 1}, the function $u^\epsilon_0\in H^1_0(\textcolor{black}{\mathcal{I}}^\epsilon)$ satisfies the (rescaled) equation
\begin{equation}
\label{proof decay equation 20}
(-\Delta_y-\lambda_\epsilon) u^\epsilon_0(\epsilon y)=\lambda_\epsilon \tilde{u}^\epsilon(\epsilon y), \qquad y\in \textcolor{black}{\omega^k}, \quad k\in \N^\epsilon(\omega)
\end{equation}
(recall the notation \eqref{28 April 2020 equation 3}).
Since $\lambda_0\not \in \sigma(-\Delta_y)$ by assumption --- see also Lemma~\ref{lemma L infinity estimate} --- and $\lambda_\epsilon\to \lambda_0$ as $\epsilon$ tends to zero, for sufficiently small $\epsilon$ the norm of the resolvent $(-\Delta_y-\lambda_\epsilon)^{-1}$ is bounded above uniformly in $\epsilon$.  Consequently,
\eqref{proof decay equation 20} gives us
\begin{equation}
\label{proof decay equation 21}
\|u^\epsilon_0\|_{L^2(\textcolor{black}{\mathcal{I}}^\epsilon)} \le C \| \tilde{u}^\epsilon \|_{L_2(\textcolor{black}{\mathcal{I}}^\epsilon)}.
\end{equation}
Multiplying (the unscaled version of) \eqref{proof decay equation 20} by $\tilde{u}^\epsilon_0$, integrating by parts and using \eqref{proof decay equation 21}, we obtain
\begin{equation}
\label{proof decay equation 22}
\|\epsilon \nabla u^\epsilon_0\|_{L^2(\textcolor{black}{\mathcal{I}}^\epsilon)}+\|u^\epsilon_0\|_{L^2(\textcolor{black}{\mathcal{I}}^\epsilon)}
\le C \|\tilde{u}^\epsilon\|_{L^2(\textcolor{black}{\mathcal{I}}^\epsilon)}.
\end{equation}

We are  ready to estimate \eqref{proof decay equation 18}. Utilising \eqref{proof decay equation 22}, we have
\begin{equation}
\label{proof decay equation 23}
\begin{split}
\mathcal{I}_0 &\le C \epsilon^2 \sum_{k\in \N^\epsilon(\omega)}\|\nabla(u^\epsilon_0+\tilde{u}^\epsilon)\|_{L^2(\epsilon\textcolor{black}{\mathcal{\omega}^k})}\|\nabla(\psi_R \tilde{u}^\epsilon)\|_{L^2(\epsilon\textcolor{black}{\mathcal{\omega}^k})}
\\
&
{\le}
C \epsilon \sum_{k\in \N^\epsilon(\omega)}  \left(\|\tilde{u}^\epsilon\|_{L^2(\epsilon\textcolor{black}{\mathcal{\omega}^k})}+ \epsilon\|\nabla \tilde{u}^\epsilon \|_{L^2(\epsilon\textcolor{black}{\mathcal{\omega}^k})} \right)\|\nabla(\psi_R \tilde{u}^\epsilon)\|_{L^2(\epsilon\textcolor{black}{\mathcal{\omega}^k})}.
\end{split}
\end{equation}
Applying Lemma~\ref{lemma about moving psi_R around} to the right-hand side of 
\eqref{proof decay equation 23} we obtain
\begin{equation}
\label{proof decay equation 31}
\mathcal{I}_0 \le C \epsilon\left(
\| \psi_R^{1/2}\tilde{u}^\epsilon\|_{L^2(\R^d)}^2
+
\| \nabla(\psi_R^{1/2}\tilde{u}^\epsilon)\|_{L^2(\R^d \setminus \textcolor{black}{\mathcal{I}}^\epsilon)}^2
\right).
\end{equation}

Let us move  on to $\mathcal{I}_1$.
Substituting \eqref{proof decay equation 20a} into \eqref{proof decay equation 19} we obtain
\begin{equation}
\label{proof decay equation 32}
\mathcal{I}_1=\lambda_\epsilon\int_{\textcolor{black}{\mathcal{I}}^\epsilon\cup \textcolor{black}{\mathcal{M}}^\epsilon} \psi_R\,(\tilde{u}^\epsilon)^2+\lambda_\epsilon \int_{\textcolor{black}{\mathcal{I}}^\epsilon} u^\epsilon_0 \psi_R\tilde{u}^\epsilon.
\end{equation}
We further decompose $u^\epsilon_0$ as $u^\epsilon_0=u^\epsilon_{0,1}+u^\epsilon_{0,2}$, 
where $u^\epsilon_{0,1}$ and $u^\epsilon_{0,2}$ are defined to be the solutions of
\begin{equation}
\label{proof decay equation 33}
-\Delta_y u^\epsilon_{0,1}(\epsilon y)-\lambda_\epsilon u^\epsilon_{0,1}(\epsilon y)=\lambda_\epsilon \langle\tilde{u}^\epsilon(\epsilon y)\rangle_{ \textcolor{black}{\mathcal{\omega}^k}}, \qquad y\in\textcolor{black}{\mathcal{\omega}^k}, \ k\in \N^\epsilon(\omega),
\end{equation}
and
\begin{equation}
\label{proof decay equation 34}
-\Delta_y u^\epsilon_{0,2}(\epsilon y)-\lambda_\epsilon u^\epsilon_{0,2}(\epsilon y)=
\lambda_\epsilon
\left[
\tilde{u}^\epsilon(\epsilon y)- \langle\tilde{u}^\epsilon(\epsilon y)\rangle_{ \textcolor{black}{\mathcal{\omega}^k}}
\right]
,
 \qquad y\in  \textcolor{black}{\mathcal{\omega}^k}, \ k\in \N^\epsilon(\omega),
\end{equation}
respectively.
Formula \eqref{proof decay equation 34}, combined with the Poincar\'e inequality and the uniform (in $\epsilon$) boundedness of the resolvent $(-\Delta_y-\lambda_\epsilon)^{-1}$, implies
\begin{equation*}
\label{proof decay equation 35}
\|u^\epsilon_{0,2}(\epsilon y)\|_{L^2(\textcolor{black}{\omega^k})}\le C \|\tilde{u}^\epsilon(\epsilon y)- \langle\tilde{u}^\epsilon(\epsilon y)\rangle_{\textcolor{black}{\mathcal{\omega}^k}}\|_{L^2(\textcolor{black}{\mathcal{\omega}^k})} \le C \|\nabla_y \tilde{u}^\epsilon(\epsilon y)\|_{L^2(\textcolor{black}{\mathcal{\omega}^k})},
\end{equation*}
\begin{equation*}
\label{proof decay equation 36}
\|u^\epsilon_{0,2}\|_{L^2(\epsilon \textcolor{black}{\mathcal{\omega}^k})}\le C\epsilon \|\nabla \tilde{u}^\epsilon\|_{L^2(\epsilon \textcolor{black}{\mathcal{\omega}^k})}.
\end{equation*}
Arguing as above and resorting to Lemma~\ref{lemma about moving psi_R around}, we thus obtain
\begin{equation}
\label{proof decay equation 37}
\begin{split}
\Big|\int_{\textcolor{black}{\mathcal{I}}^\epsilon}   u^{\epsilon}_{0,2} \psi_R \tilde{u}^\epsilon  \Big|
\le
C \epsilon\left(
\| \psi_R^{1/2}\tilde{u}^\epsilon\|_{L^2(\R^d)}^2
+
\| \nabla(\psi_R^{1/2}\tilde{u}^\epsilon)\|_{L^2(\R^d \setminus \textcolor{black}{\mathcal{I}}^\epsilon)}^2
\right).
\end{split}
\end{equation}

Furthermore, formula \eqref{proof decay equation 33} implies
\begin{equation}
\label{proof decay equation 38}
u^\epsilon_{0,1}= \lambda_\epsilon \,b^\epsilon_{\lambda_\epsilon} \sum_{k\in \N^\epsilon(\omega)} \textcolor{black}{\mathbbm{1}}_{\epsilon\textcolor{black}{\omega^k}}\,\langle \tilde{u}^\epsilon\rangle_{\epsilon\textcolor{black}{\omega^k}}.
\end{equation}
Therefore, on account of \eqref{proof decay equation 37} and \eqref{proof decay equation 38}, we can estimate \eqref{proof decay equation 32} as
\begin{equation}
\label{proof decay equation 39}
\begin{split}
\mathcal{I}_1
\le \int_{\textcolor{black}{\mathcal{I}}^\epsilon\cup \textcolor{black}{\mathcal{M}}^\epsilon}(\lambda_\epsilon+\lambda_\epsilon ^2\,\textcolor{black}{\mathbbm{1}}_{\epsilon}^{\mathrm{out}}\,b^\epsilon_{\lambda_\epsilon})\psi_R (\tilde{u}^\epsilon)^2
&+C \epsilon\left(
\| \psi_R^{1/2}\tilde{u}^\epsilon\|_{L^2(\R^d)}^2
+
\| \nabla(\psi_R^{1/2}\tilde{u}^\epsilon)\|_{L^2(\R^d \setminus \textcolor{black}{\mathcal{I}}^\epsilon)}^2
\right)
\\
&
+
\lambda_\epsilon^2 \sum_{k\in \N^\epsilon(\omega)} \int_{\epsilon\textcolor{black}{\omega^k}}
b^\epsilon_{\lambda_\epsilon} \left(\langle \tilde{u}^\epsilon\rangle_{\epsilon\textcolor{black}{\omega^k}}-\tilde{u}^\epsilon \right) \psi_R \tilde{u}^\epsilon,
\end{split}
\end{equation}
where
\begin{equation*}
\textcolor{black}{\mathbbm{1}}_{\epsilon}^{\mathrm{out}}:=\sum_{k\in \N^\epsilon(\omega)} \textcolor{black}{\mathbbm{1}}_{\epsilon\textcolor{black}{\omega^k}}
\end{equation*}
is the characteristic function of the set of $\epsilon$-scaled inclusions disjoint from $\overline{\textcolor{black}{\mathcal{D}}}$.

%Lemma~\ref{lemma L infinity estimate}, combined with \cite[Lemma~7.11]{CCV2} and Assumption~\ref{main assumption}, gives us
 By means of the bound \eqref{corollary L infinity estimates equation 1}, the Poincar\'e inequality and Lemma~\ref{lemma about moving psi_R around}, we get 
\begin{equation*}
\label{proof decay equation 41}
\Bigg| \lambda_\epsilon^2 \sum_{k\in \N^\epsilon(\omega)} \int_{\epsilon\textcolor{black}{\omega^k}}
b^\epsilon_{\lambda_\epsilon} \left(\langle \tilde{u}^\epsilon\rangle_{\epsilon\textcolor{black}{\omega^k}}-\tilde{u}^\epsilon \right) \psi_R \tilde{u}^\epsilon\Bigg|
\le 
C \epsilon\left(
\| \psi_R^{1/2}\tilde{u}^\epsilon\|_{L^2(\R^d)}^2
+
\| \nabla(\psi_R^{1/2}\tilde{u}^\epsilon)\|_{L^2(\R^d \setminus \textcolor{black}{\mathcal{I}}^\epsilon)}^2
\right),
\end{equation*}
so that \eqref{proof decay equation 39} turns into
\begin{equation}
\label{proof decay equation 42}
\begin{split}
\mathcal{I}_1
\le 
\int_{\textcolor{black}{\mathcal{I}}^\epsilon\cup \textcolor{black}{\mathcal{M}}^\epsilon}(\lambda_\epsilon+\lambda_\epsilon ^2\,\textcolor{black}{\mathbbm{1}}_{\epsilon}^{\mathrm{out}}\,b^\epsilon_{\lambda_\epsilon})\psi_R (\tilde{u}^\epsilon)^2
+
C \epsilon\left(
\| \psi_R^{1/2}\tilde{u}^\epsilon\|_{L^2(\R^d)}^2
+
\| \nabla(\psi_R^{1/2}\tilde{u}^\epsilon)\|_{L^2(\R^d \setminus \textcolor{black}{\mathcal{I}}^\epsilon)}^2
\right).
\end{split}
\end{equation}

Next we analyse the integral on the right-hand side of \eqref{proof decay equation 42}.  Suppose 
\begin{equation*}
\label{ranges of alpha version 1}
\alpha\in (0,\sqrt{|\beta_\infty(\lambda_0)|/\gamma}).
\end{equation*}
Let us tile $\R^d$ with hypercubes of size $L$,
\[
\R^d= \bigcup_{\xi\in L\, \Z^d} \square_\xi^L,
\]
where the value of $L$ is chosen in accordance with Lemma~\ref{lemma local beta function} for some $\delta$ satisfying
\begin{equation}
\label{condition on delta}
\alpha< \sqrt{\dfrac{|\beta_\infty(\lambda_0)|-\delta}{\gamma}},
\end{equation}
and define 
\begin{equation*}
J_\mathrm{bd}(\epsilon)=\{\xi\in L\, \Z^d\ |\ \epsilon \square_\xi^L \cap  \partial \textcolor{black}{\mathcal{D}}\ne \emptyset \ \ \text{or}\ \ \epsilon \square_\xi^L\cap  (\epsilon\textcolor{black}{\omega}\setminus \textcolor{black}{\mathcal{I}}^\epsilon)\cap (\R^d\setminus \overline{\textcolor{black}{\mathcal{D}}})\ne \emptyset\},
\end{equation*}
\begin{equation*}
J_\mathrm{out}(\epsilon)=\{\xi\in L\, \Z^d\ |\ \epsilon \square_\xi^L \subset \mathbb{R}^d\setminus \overline{\textcolor{black}{\mathcal{D}}}, \quad \xi \not\in J_\mathrm{bd}(\epsilon)\}.
\end{equation*}
The set $J_\mathrm{bd}(\epsilon)$ labels hypercubes that either intersect the boundary of $\textcolor{black}{\mathcal{D}}$ or have non-empty intersection with one of the inclusions that we have discarded at the very beginning, on the grounds that they had non-empty intersection with $\overline{\textcolor{black}{\mathcal{D}}}$, cf.~Remark~\ref{remark removing inclusions near boundary of defect}.

To begin with, let us estimate the contribution from hypercubes labelled by $J_\mathrm{bd}(\epsilon)$.  In light of Corollary~\ref{corollary L infinity estimates}, we have
\begin{equation}
\label{splitting the integral equation 1}
\Bigg|\sum_{\xi \in J_\mathrm{bd}(\epsilon)} \int_{(\textcolor{black}{\mathcal{I}}^\epsilon\cup \textcolor{black}{\mathcal{M}}^\epsilon)\cap\epsilon \square_\xi^L}(\lambda_\epsilon+\lambda_\epsilon ^2\,\textcolor{black}{\mathbbm{1}}_{\epsilon}^{\mathrm{out}}\,b^\epsilon_{\lambda_\epsilon})\psi_R (\tilde{u}^\epsilon)^2\Bigg|
\le
C \sum_{\xi \in J_\mathrm{bd}(\epsilon)} \int_{\epsilon \square_\xi^L}   (\tilde{u}^\epsilon)^2.
\end{equation}
From \eqref{proof decay equation 4} we have that $\{\tilde{u}^\epsilon\}$ is uniformly bounded in $H^1(\R^d)$, hence weakly compact, namely, up to a subsequence,
\begin{equation}
\label{two-scale convergence equation 1 ter}
\tilde{u}^\epsilon \rightharpoonup u_0 \quad \text{in}\quad H^1(\R^d)
\end{equation}
for some $u_0\in H^1(\R^d)$.  Note that at this stage we are not yet in a position (nor need) to show that $u_0$ is the macroscopic component of an eigenfunction of $\mathcal{A}^\mathrm{hom}$ corresponding to $\lambda_0$; this fact will be established in Section~\ref{Strong stochastic two-scale convergence}. Let $U$ be an open bounded set such that $\overline{\textcolor{black}{\mathcal{D}}} \subset U$. By  the Rellich–Kondrachov theorem, up to a subsequence,
\begin{equation}
	\label{4.47}
	\tilde{u}^\epsilon \to  u_0 \quad \text{in}\quad L^2(U).
\end{equation}  
For small enough $\e$ we have
\begin{equation*}
\bigcup_{\xi \in J_\mathrm{bd}(\epsilon)}\epsilon \square_\xi^L \subset U,
\end{equation*}
and
\begin{equation}\label{4.49}
\Bigg| \bigcup_{\xi \in J_\mathrm{bd}(\epsilon)}\epsilon \square_\xi^L \Bigg|=o(1) \quad \text{as}\quad \epsilon\to 0.
\end{equation}
Then from \eqref{4.47} and \eqref{4.49} one easily gets
\begin{equation*}
\label{proof decay equation 43 bis}
 \sum_{\xi \in J_\mathrm{bd}(\epsilon)} \int_{\epsilon \square_\xi^L}   (\tilde{u}^\epsilon)^2=o(1)\quad \text{as}\quad \epsilon\to 0.
\end{equation*}

Let us then estimate the contribution to the integral on the right-hand side of \eqref{proof decay equation 42} from hypercubes disjoint from $\textcolor{black}{\mathcal{D}}$ and labelled by $J_\mathrm{out}(\epsilon)$.  Resorting to  Corollary~\ref{corollary L infinity estimates},  the Poincar\'e inequality, Lemma~\ref{lemma about moving psi_R around}, and Lemma~\ref{lemma local beta function},  we get
\begin{equation}
\label{proof decay equation 43}
\begin{split}
\sum_{\xi \in J_\mathrm{out}(\epsilon)} \int_{(\textcolor{black}{\mathcal{I}}^\epsilon\cup \textcolor{black}{\mathcal{M}}^\epsilon)\cap\epsilon \square_\xi^L}(\lambda_\epsilon & +\lambda_\epsilon ^2\,\textcolor{black}{\mathbbm{1}}_{\epsilon}^{\mathrm{out}}\, b^\epsilon_{\lambda_\epsilon})\psi_R (\tilde{u}^\epsilon)^2
=
\sum_{\xi \in J_\mathrm{out}(\epsilon)} \int_{\epsilon \square_\xi^L}(\lambda_\epsilon +\lambda_\epsilon ^2\, b^\epsilon_{\lambda_\epsilon})\psi_R (\tilde{u}^\epsilon)^2
\\
&
=
\sum_{\xi \in J_\mathrm{out}(\epsilon)} \int_{\epsilon \square_\xi^L}\left(\lambda_\epsilon+\lambda_\epsilon^2 \,b^\epsilon_{\lambda_\epsilon} \right) \psi_R^{1/2}\, \tilde{u}^\epsilon \left( \psi_R^{1/2}\, \tilde{u}^\epsilon
-
\langle
\psi_R^{1/2}\, \tilde{u}^\epsilon
\rangle_{\epsilon \square_\xi^L}
\right)
\\
&
+
\sum_{\xi \in J_\mathrm{out}(\epsilon)} \int_{\epsilon \square_\xi^L}
\left(\lambda_\epsilon+\lambda_\epsilon^2 \,b^\epsilon_{\lambda_\epsilon} \right)
\langle
\psi_R^{1/2}\, \tilde{u}^\epsilon
\rangle_{\epsilon \square_\xi^L}
\left(
 \psi_R^{1/2}\, \tilde{u}^\epsilon
-
\langle
\psi_R^{1/2}\, \tilde{u}^\epsilon
\rangle_{\epsilon \square_\xi^L}
\right)
\\
&
+
\sum_{\xi \in J_\mathrm{out}(\epsilon)} \int_{\epsilon \square_\xi^L}
\left(\lambda_\epsilon+\lambda_\epsilon^2 \,b^\epsilon_{\lambda_\epsilon} \right)
\langle
\psi_R^{1/2}\, \tilde{u}^\epsilon
\rangle_{\epsilon \square_L(\xi)}
\langle
\psi_R^{1/2}\, \tilde{u}^\epsilon
\rangle_{\epsilon\square_\xi^L}
\\
&
\le
C\sum_{\xi \in J_\mathrm{out}(\epsilon)} \|\psi_R^{1/2}\, \tilde{u}^\epsilon\|_{L^2(\epsilon \square_\xi^L)} \|\epsilon\nabla(\psi_R^{1/2}\, \tilde{u}^\epsilon)\|_{L^2(\epsilon \square_\xi^L)}
\\
&
+
\sum_{\xi \in J_\mathrm{out}(\epsilon)} 
\langle \lambda_\epsilon+\lambda_\epsilon^2 \,b^\epsilon_{\lambda_\epsilon} \rangle_{\epsilon \square_\xi^L}
\langle
\psi_R^{1/2}\, \tilde{u}^\epsilon
\rangle_{\epsilon\square_\xi^L}
\int_{\epsilon \square_\xi^L}
\psi_R^{1/2}\, \tilde{u}^\epsilon
\\
&
\le
C\epsilon \left( \|\psi_R^{1/2}\, \tilde{u}^\epsilon\|_{L^2(\R^d)}^2 + \|\nabla(\psi_R^{1/2}\, \tilde{u}^\epsilon)\|_{L^2(\R^d \setminus \textcolor{black}{\mathcal{I}}^\epsilon)}^2 \right)
\\
&
-(|\beta_\infty(\lambda_0)|-\delta) \sum_{\xi \in J_\mathrm{out}(\epsilon)}\int_{\epsilon \square_\xi^L} \psi_R^{1/2}\, \tilde{u}^\epsilon  \langle
\psi_R^{1/2}\, \tilde{u}^\epsilon
\rangle_{\epsilon \square_\xi^L}
\\
&
\le 
(C\epsilon-|\beta_\infty(\lambda_0)|+ \delta) \|\psi_R^{1/2}\, \tilde{u}^\epsilon\|_{L^2(\R^d)}^2 +C\epsilon \|\nabla(\psi_R^{1/2}\, \tilde{u}^\epsilon)\|_{L^2(\R^d \setminus \textcolor{black}{\mathcal{I}}^\epsilon)}^2\,.
\end{split}
\end{equation}

Combining \eqref{splitting the integral equation 1} and \eqref{proof decay equation 43},  we can estimate the integral on the RHS of \eqref{proof decay equation 42} to obtain
\begin{equation}
\label{proof decay equation 44}
\begin{split}
\mathcal{I}_1
&
\le 
(C\epsilon-|\beta_\infty(\lambda_0)|+\delta) \|\psi_R^{1/2}\, \tilde{u}^\epsilon\|_{L^2(\R^d)}^2 +C\epsilon \|\nabla(\psi_R^{1/2}\, \tilde{u}^\epsilon)\|_{L^2(\R^d \setminus \textcolor{black}{\mathcal{I}}^\epsilon)}^2 + o(1)\,.
\end{split}
\end{equation}

Finally, substituting \eqref{proof decay equation 18}, \eqref{proof decay equation 31} and \eqref{proof decay equation 19}, \eqref{proof decay equation 44} into \eqref{proof decay equation 17}, we arrive at
\begin{equation*}
\label{proof decay equation 45}
(c-C\epsilon)\,\|\nabla(\psi_R^{1/2}\tilde{u}^\epsilon)\|_{L^2(\R^d\setminus \textcolor{black}{\mathcal{I}}^\epsilon)}^2+(|\beta_\infty(\lambda_0)|-\delta-\gamma \alpha^2-C\epsilon)\|\psi_R^{1/2}\tilde{u}^\epsilon\|_{L^2(\R^d)}^2
\le 
C+o(1).
\end{equation*}

Hence, 
in view of \eqref{condition on delta},  for sufficiently small $\epsilon$ we get
\begin{equation}
\label{proof decay equation 46}
\|\nabla(\psi_R^{1/2}\tilde{u}^\epsilon)\|_{L^2(\R^d\setminus \textcolor{black}{\mathcal{I}}^\epsilon)}^2+\|\psi_R^{1/2}\tilde{u}^\epsilon\|_{L^2(\R^d)}^2
\le 
C,
\end{equation}
where $C$ is a constant independent of $\epsilon$ and $R$.

Now, formulae \eqref{proof decay equation 46} and \eqref{theorem uniform exponential decay equation 2} imply
\begin{equation*}
\label{proof decay equation 47}
\|e^{\alpha|x|}\tilde{u}^\epsilon\|_{H^1(B_R(0))}\le C,
\end{equation*}
uniformly in $\e$ and $R$. Letting $R$ tend to $+\infty$  we obtain
\eqref{theorem uniform exponential decay equation 5}.

It remains only to translate \eqref{theorem uniform exponential decay equation 5} into an $L^2$ estimate for our original family of eigenfunctions $u^\epsilon$.
In view of the decomposition $u^\epsilon=\tilde{u}^\epsilon+u^\epsilon_0$, we have
\begin{equation}
\label{proof decay equation 49}
\begin{split}
\|e^{\alpha|x|}u^\epsilon\|_{L^2(\R^d)}
&
\le 
\|e^{\alpha|x|}\tilde{u}^\epsilon\|_{L^2(\R^d)}+ \|e^{\alpha|x|}u^\epsilon_0\|_{L^2(\R^d)}
\\
&
\le
\|e^{\alpha|x|}\tilde{u}^\epsilon\|_{L^2(\R^d)} 
+ 
\sum_{k\in \N^\epsilon(\omega)}
\|e^{\alpha|x|}u^\epsilon_0\|_{L^2(\epsilon\textcolor{black}{\omega^k})}.
\end{split}
\end{equation}
The right-hand side of \eqref{proof decay equation 49} can then be estimated by resorting to 
\eqref{theorem uniform exponential decay equation 5},
\eqref{proof decay equation 21} and \eqref{proof decay equation 27}, to obtain
\eqref{theorem uniform exponential decay equation 4}.
%\begin{equation}
%\label{proof decay equation 50}
%\|e^{\alpha|x|}u^\epsilon\|_{L^2(\R^d)}
%\le 
%C.
%\end{equation}

\section{Strong stochastic two-scale convergence}
\label{Strong stochastic two-scale convergence}

Using the results of Section~\ref{Uniform exponential decay of eigenfunctions},  we will show in this section that our sequence of eigenfunctions $\{u^\epsilon\}$, up to a subsequence, strongly stochastically two-scale converges to an eigenfunction $\textcolor{black}{\overline{u}}^0\in H$ of the operator $\A^\mathrm{hom}$, thus completing the proof of Theorem~\ref{main theorem 2}.

We will do this in two steps: first we will show that the sequence $\{u^\epsilon\}$ strongly stochastically two-scale converges, up to a subsequence, to some limit function $\textcolor{black}{\overline{u}}^0(x,\omega)=u_0(x)+\textcolor{black}{\overline{u}}_1(x,\omega) \in H$, and then we will argue that $\textcolor{black}{\overline{u}}^0$ is an eigenfunction of $\A^\mathrm{hom}$ corresponding to $\lambda_0$.

Recall the decomposition $u^\epsilon=\tilde{u}^\epsilon+u^\epsilon_0$,
where $\tilde{u}^\epsilon$ is the extension from Theorem~\ref{theorem uniform exponential decay}
and $u^\epsilon_0$ is defined in accordance with \eqref{proof decay equation 20a}.

%From \eqref{proof decay equation 4} we have that $\{\tilde{u}^\epsilon\}$ is uniformly bounded in $H^1(\R^d)$, hence weakly compact, namely, up to a subsequence,
%\begin{equation*}
%\label{two-scale convergence equation 1}
%\tilde{u}^\epsilon \rightharpoonup u_0 \quad \text{in}\quad H^1(\R^d).
%\end{equation*}
We have that $\tilde{u}^\epsilon$ weakly converges in $H^1(\R^d)$ to some function $u_0$, see \eqref{two-scale convergence equation 1 ter}.
Because $\R^d$ is not bounded, one cannot immediately argue strong convergence in $L^2(\R^d)$. However, this can be achieved with the help of Theorem~\ref{theorem uniform exponential decay}. 

\begin{lemma}
\label{lemma 1 stochastic two-scale}
We have
\begin{equation}
\label{lemma 1 stochastic two-scale equation 1}
\tilde{u}^\epsilon
\to
u_0 \quad \text{in}\quad L^2(\R^d).
\end{equation}
\end{lemma}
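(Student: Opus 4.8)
The plan is to upgrade the weak convergence $\tilde u^\epsilon \rightharpoonup u_0$ in $H^1(\R^d)$ — recorded in \eqref{two-scale convergence equation 1 ter} — to strong convergence in $L^2(\R^d)$ by combining local strong convergence on balls (Rellich–Kondrachov) with the \emph{uniform} exponential decay supplied by Theorem~\ref{theorem uniform exponential decay}, which controls the tails uniformly in $\epsilon$. The point is that weak $H^1$ convergence alone fails to give strong $L^2$ convergence on the unbounded domain $\R^d$ only because mass could escape to infinity; the uniform decay estimate \eqref{theorem uniform exponential decay equation 5} rules this out.

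First I would fix $\delta>0$ and choose $R$ large. By \eqref{theorem uniform exponential decay equation 5} there are $\epsilon_0>0$ and a constant $C$, independent of $\epsilon$, with $\|e^{\alpha|x|}\tilde u^\epsilon\|_{H^1(\R^d)}\le C$ for all $0<\epsilon<\epsilon_0$; hence
\[
\|\tilde u^\epsilon\|_{L^2(\R^d\setminus B_R(0))}^2 \le e^{-2\alpha R}\,\|e^{\alpha|x|}\tilde u^\epsilon\|_{L^2(\R^d)}^2 \le C\,e^{-2\alpha R}.
\]
The same bound holds for the weak limit $u_0$ by lower semicontinuity of the norm under weak convergence (or directly, since $u_0$ also satisfies an exponential decay estimate, cf.~\eqref{3.9}). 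Choose $R$ so large that $C\,e^{-2\alpha R}<\delta/3$. Second, on the fixed ball $B_R(0)$ the embedding $H^1(B_R(0))\hookrightarrow L^2(B_R(0))$ is compact, so from $\tilde u^\epsilon \rightharpoonup u_0$ in $H^1$ we get, up to a subsequence, $\tilde u^\epsilon \to u_0$ in $L^2(B_R(0))$; pick $\epsilon_1\le\epsilon_0$ so that $\|\tilde u^\epsilon-u_0\|_{L^2(B_R(0))}<\delta/3$ for $\epsilon<\epsilon_1$. Combining the interior and exterior estimates via the triangle inequality,
\[
\|\tilde u^\epsilon-u_0\|_{L^2(\R^d)} \le \|\tilde u^\epsilon-u_0\|_{L^2(B_R(0))} + \|\tilde u^\epsilon\|_{L^2(\R^d\setminus B_R(0))} + \|u_0\|_{L^2(\R^d\setminus B_R(0))} < \delta,
\]
which gives \eqref{lemma 1 stochastic two-scale equation 1}. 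A standard subsequence-of-subsequences argument removes the qualifier ``up to a subsequence'' once one notes the limit $u_0$ is unique (it is the weak $H^1$ limit of the whole sequence, if that sequence is itself already the chosen subsequence fixed in Section~\ref{Uniform exponential decay of eigenfunctions}).

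The only mild subtlety — and the step I would be most careful about — is keeping the bookkeeping of subsequences consistent with \eqref{two-scale convergence equation 1 ter} and \eqref{4.47}: the weak limit $u_0$ was extracted along a subsequence, so strictly speaking everything here is stated along that same subsequence, which is harmless since it is the subsequence we work with throughout. There is no real analytic obstacle; the content is entirely in invoking Theorem~\ref{theorem uniform exponential decay} to tame the tails, and the rest is the routine ``local compactness plus uniform tail decay $\Rightarrow$ global strong convergence'' lemma.
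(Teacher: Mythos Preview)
Your proof is correct and follows essentially the same approach as the paper: both use the uniform exponential decay \eqref{theorem uniform exponential decay equation 5} to control the tails outside a large ball, Rellich--Kondrachov for strong $L^2$ convergence on the ball, and combine via the triangle inequality. The only cosmetic difference is that the paper runs a diagonalisation over balls $B_n(0)$, $n\in\N$, whereas you fix a single $R$ depending on $\delta$; both variants yield the same conclusion.
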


\begin{proof}
For a given $n\in \N$, we have that $\{\tilde{u}^\epsilon\}$ is weakly compact in $H^1(B_n(0))$, hence strongly compact in $L^2(B_n(0))$. A standard diagonalisation argument gives us that, up to a subsequence, $\tilde{u}^\epsilon \to u_0$ in $L^2(B_n(0))$ for every $n \in \N$. The $H^1$-exponential decay \eqref{theorem uniform exponential decay equation 5} of $\tilde{u}^\epsilon$ implies
%\begin{equation}
%\label{two-scale convergence equation 2}
%e^{\frac{R\alpha}{2}}\|\tilde{u}^\epsilon \|_{H^1(\R^d\setminus B_R(0))}\le \|e^{\frac{\alpha}{2} |x|}\,\tilde{u}^\epsilon \|_{H^1(\R^d\setminus B_R(0))}  \le C, 
%\end{equation}
\begin{equation*}
\label{two-scale convergence equation 3}
\|\tilde{u}^\epsilon \|_{H^1(\R^d\setminus B_n(0))}\le C e^{-n\alpha}.
\end{equation*}
For every $\delta>0$, one can choose $n$ sufficiently large and $\epsilon$ sufficiently small so that
\[
\|\tilde{u}^\epsilon\|_{L^2(\R^d\setminus B_n(0))}\le \delta, 
\quad
\| u_0 \|_{L^2(\R^d \setminus B_n(0))}\le \delta,
\quad
\| \tilde{u}^\epsilon-u_0 \|_{L^2(B_n(0))}\le \delta.
\]
Hence
\begin{equation*}
\label{4 June 2020 equation 7}
\|\tilde{u}^\epsilon-u_0\|_{L^2(\R^d)}\le \|\tilde{u}^\epsilon-u_0\|_{L^2(B_n(0))}+\|\tilde{u}^\epsilon\|_{L^2(\R^d\setminus B_n(0))}+\| u_0 \|_{L^2(\R^d \setminus B_n(0))} \le 3 \delta,
\end{equation*}
which infers \eqref{lemma 1 stochastic two-scale equation 1}.
%\begin{equation}
%\label{4 June 2020 equation 8}
%\tilde{u}^\epsilon\to u_0 \quad \text{in}\quad L^2(\R^d).
%\end{equation}
%By Theorem~\ref{theorem stochastic two scale convergence}, formula \eqref{4 June 2020 equation 8} gives us \eqref{lemma 1 stochastic two-scale equation 1}.
\end{proof}

\begin{lemma}
\label{lemma 2 stochastic two-scale}
There exists $\textcolor{black}{\overline{u}}_1\in L^2(\R^d;\textcolor{black}{H^1_0(\Omega)})$ such that
\begin{equation*}
u^\epsilon_0 \overset{2}{\longrightarrow} \textcolor{black}{\overline{u}}_1.
\end{equation*}
Furthermore, the function $\textcolor{black}{\overline{u}}_1$ satisfies
\color{black}
\begin{equation}
\label{lemma 2 stochastic two-scale equation 1}
\E\left[(\nabla_y u_1 \cdot \nabla_y \psi-\lambda_0 \,u_1\,\psi)\,1_\Omega\right]
=
\lambda_0 (1-\mathbbm{1}_{\mathcal{D}})u_0\,\E\left[ {\psi}\,1_\Omega\right],
\qquad \forall \overline{\psi}\in H^1_0(\Omega), \, \forall x \in\R^d.
\end{equation}
\end{lemma}
\begin{proof}
\color{black}
Weak convergence $u^\epsilon_0 \overset{2}{\rightharpoonup} \textcolor{black}{\overline{u}}_1$ to a function satisfying \eqref{lemma 2 stochastic two-scale equation 1} follows from a straightforward adaptation of \cite[Proposition~4.1]{CCV1} to the case at hand, 
with account of the fact that $u^\epsilon_0$ satisfies
\begin{equation}
\label{proof lemma 2 stochastic two-scale equation 0}
\epsilon^2 \int_{\textcolor{black}{\mathcal{I}}^\epsilon} \nabla u^\epsilon_0 \cdot \nabla \varphi -\lambda_\epsilon \int_{\textcolor{black}{\mathcal{I}}^\epsilon}u^\epsilon_0 \varphi =\lambda_\epsilon \int_{\textcolor{black}{\mathcal{I}}^\epsilon} \tilde{u}^\epsilon \varphi \qquad \forall \varphi \in H^1_0(\textcolor{black}{\mathcal{I}}^\epsilon),
\end{equation}
and that $\lambda_\epsilon \textcolor{black}{\mathbbm{1}}_{\textcolor{black}{\mathcal{I}}^\epsilon} \tilde{u}^\epsilon \overset{2}{\rightharpoonup} \lambda_0\, \textcolor{black}{1_\Omega \,\mathbbm{1}_{\R^d\setminus \mathcal{D}}} \,u_0$. (Clearly, $\textcolor{black}{\overline{u}}_1 = 0$ in $\textcolor{black}{\mathcal{D}}\times \Omega$.)

That the convergence is, in fact, strong can be established by means of a general argument due to Zhikov \cite{zhikov2000,zhikov2004}. Namely, let $z^\epsilon\in H^1_0(\textcolor{black}{\mathcal{I}}^\epsilon)$ be the solution of 
\begin{equation}
\label{proof lemma 2 stochastic two-scale equation 1}
\epsilon^2 \int_{\textcolor{black}{\mathcal{I}}^\epsilon} \nabla z^\epsilon \cdot \nabla \varphi -\lambda_\epsilon \int_{\textcolor{black}{\mathcal{I}}^\epsilon}z^\epsilon \varphi =\lambda_\epsilon \int_{\textcolor{black}{\mathcal{I}}^\epsilon} u^\epsilon_0 \varphi \qquad \forall \varphi \in H^1_0(\textcolor{black}{\mathcal{I}}^\epsilon).
\end{equation}
Then, the same adaptation of \cite[Proposition~4.1]{CCV1} gives us 
\[
z^\epsilon\overset{2}{\rightharpoonup} \textcolor{black}{\overline{z}}(x,\omega) \in L^2(\R^d; \textcolor{black}{H^1_0(\Omega)}),
\]
where $\textcolor{black}{\overline{z}}$ solves \color{black}
\begin{equation}
\label{proof lemma 2 stochastic two-scale equation 2}
\E[(\nabla_y z \cdot \nabla_y \psi -\lambda_0 \, z \psi)\,1_\Omega] 
=
\lambda_0
\E[ \overline{u}_1 \overline{\psi}\,\overline{1_\Omega}] \qquad \forall \overline{\psi} \in H^1_0(\Omega).
\end{equation}
\color{black}
By using $z^\epsilon$ and  $u^\epsilon_0$ as test functions in \eqref{proof lemma 2 stochastic two-scale equation 0}  and \eqref{proof lemma 2 stochastic two-scale equation 1}, respectively, and equating the right-hand sides, we get \color{black}
\begin{equation}
\label{proof lemma 2 stochastic two-scale equation 3}
 \lim_{\epsilon\to 0}\int_{\mathcal{I}^\epsilon} (u^\epsilon_0)^2=\lim_{\epsilon\to 0}\int_{\mathcal{I}^\epsilon} \tilde{u}^\epsilon z^\epsilon =\E\left[\int_{\R^d} u_0 \, \overline{z}\,dx\right].
\end{equation}
\color{black}
Finally, by using $\textcolor{black}{\overline{z}}$ and  $\textcolor{black}{\overline{u}}_1$ as test functions in  \eqref{lemma 2 stochastic two-scale equation 1}  and \eqref{proof lemma 2 stochastic two-scale equation 2}, respectively, we arrive at \color{black}
\begin{equation}
\label{proof lemma 2 stochastic two-scale equation 4}
\E\left[\int_{\R^d} u_0 \, \overline{z}\, dx\right]=\E\left[\int_{\R^d} \overline{u}_1^2\,dx\right].
\end{equation}
\color{black}
Formulae \eqref{proof lemma 2 stochastic two-scale equation 3} and \eqref{proof lemma 2 stochastic two-scale equation 4} give us strong stochastic two-scale convergence of $u_0^\e$.
\end{proof}

\begin{lemma}
\label{lemma 3 stochastic two-scale}
The function $\textcolor{black}{\overline{u}}^0=u_0+\textcolor{black}{\overline{u}}_1$ is an eigenfunction of $\A^\mathrm{hom}$ corresponding to the eigenvalue $\lambda_0$, i.e. it satisfies
\eqref{eigenvalue Ahom part 1}--\eqref{eigenvalue Ahom part 2}.
\end{lemma}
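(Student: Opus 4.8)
By Lemma~\ref{lemma 2 stochastic two-scale} the microscopic equation \eqref{eigenvalue Ahom part 2} already holds: it \emph{is} \eqref{lemma 2 stochastic two-scale equation 1}, once one observes that $(1-\chi_2)\equiv 1$ on $\R^d\setminus S_2$ and that $u_1$ vanishes on $S_2\times\Omega$, so that \eqref{eigenvalue Ahom part 2} is trivially satisfied for $x\in S_2$. The plan is therefore to establish the macroscopic equation \eqref{eigenvalue Ahom part 1}. Both sides of \eqref{eigenvalue Ahom part 1} are bounded linear functionals of $\varphi_0\in W^{1,2}(\R^d)$ — the left-hand side because $\nabla u_0\in L^2(\R^d)$ (as $u_0\in H^1(\R^d)$, see \eqref{two-scale convergence equation 1 ter}), the right-hand side because $u_0\in L^2(\R^d)$ and, by the resolvent bound applied to \eqref{lemma 2 stochastic two-scale equation 1}, $|\langle u_1\rangle_\Omega(x)|\le C|u_0(x)|$, so that $\langle u_1\rangle_\Omega\in L^2(\R^d)$ — hence by density of $C_0^\infty(\R^d)$ in $W^{1,2}(\R^d)$ it suffices to prove \eqref{eigenvalue Ahom part 1} for $\varphi_0\in C_0^\infty(\R^d)$.

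Fix such a $\varphi_0$, pick $L$ with $\operatorname{supp}\varphi_0\subset B_{L/2}(0)$, and fix a small $\rho>0$. I would insert into the weak formulation \eqref{proof decay equation 1} of the eigenvalue problem the corrected test function $v^\epsilon:=\varphi_0+\xi_\rho\,N_j^\epsilon\,\partial_j\varphi_0\in H^1(\R^d)$, where $\{N_j^\epsilon\}$ are the first-order correctors \eqref{corrector property 1}--\eqref{corrector property 3} and $\xi_\rho$ is the cut-off from \eqref{definition S2rho}ff, which kills the corrector near $S_2$ so that $v^\epsilon\equiv\varphi_0$ on $S_2$; one then lets $\epsilon\to0$ with $\rho$ fixed, and afterwards lets $\rho\to0$. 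Using $\nabla N_j^\epsilon(x)=p_j(T_{x/\epsilon}\omega)$, \eqref{corrector property 3a} and \eqref{corrector property 3}, one has $\|\nabla v^\epsilon\|_{L^2(\R^d)}\le C$ uniformly in $\epsilon$ and $v^\epsilon\to\varphi_0$ in $L^2(\R^d)$. Consequently: the inclusion term $\epsilon^2\int_{S_0^\epsilon}\nabla u^\epsilon\cdot\nabla v^\epsilon$ is $O(\epsilon)$ by the a priori bound \eqref{proof decay equation 2}; the defect term $\int_{S_2}A_2\nabla u^\epsilon\cdot\nabla v^\epsilon=\int_{S_2}A_2\nabla\tilde u^\epsilon\cdot\nabla\varphi_0$ tends to $\int_{S_2}A_2\nabla u_0\cdot\nabla\varphi_0$ by \eqref{two-scale convergence equation 1 ter}; and, decomposing $u^\epsilon=\tilde u^\epsilon+u^\epsilon_0$, Lemma~\ref{lemma 1 stochastic two-scale} (strong $L^2$ convergence of $\tilde u^\epsilon$) together with $u^\epsilon_0\overset{2}{\rightharpoonup}u_1$ tested against $\varphi_0$ gives $\lambda_\epsilon\int_{\R^d}u^\epsilon v^\epsilon\to\lambda_0\int_{\R^d}(u_0+\langle u_1\rangle_\Omega)\varphi_0$.

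The heart of the matter is the matrix term $\int_{S_1^\epsilon}A_1\nabla\tilde u^\epsilon\cdot\nabla v^\epsilon$. Writing $\nabla v^\epsilon=\nabla\varphi_0+\xi_\rho(\nabla N_j^\epsilon)\partial_j\varphi_0+r^\epsilon_\rho$ with $\|r^\epsilon_\rho\|_{L^2(\R^d)}\to0$ as $\epsilon\to0$, and splitting $\nabla\varphi_0=\xi_\rho\nabla\varphi_0+(1-\xi_\rho)\nabla\varphi_0$, one reduces this term — modulo a quantity which is $o(1)$ as $\epsilon\to0$ — to $\int_{\R^d}\xi_\rho(\partial_j\varphi_0)\,\Phi_j^\epsilon\cdot\nabla\tilde u^\epsilon+\int_{\R^d}(1-\xi_\rho)\chi_1^\epsilon A_1\nabla\tilde u^\epsilon\cdot\nabla\varphi_0$, where $\Phi_j^\epsilon:=\chi_1^\epsilon A_1(e_j+\nabla N_j^\epsilon)$. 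The second integral is supported in $S_1^\epsilon\cap(S_2^{2\rho}\setminus S_2)$ and is $O(\rho^{1/2})$ by Cauchy--Schwarz, using $\|\nabla\tilde u^\epsilon\|_{L^2(\R^d)}\le C$ from \eqref{proof decay equation 4} and $\nabla\varphi_0\in L^\infty$. For the first integral I would use the skew-symmetric potentials $G_j^\epsilon\in H^1(B_L(0);\R^{d\times d})$ of \cite[Corollary~D.5]{CCV2} — the very ones appearing in Lemma~\ref{lemma bilinear form} — with $\partial_l(G_j^\epsilon)_{lk}=\big(\Phi_j^\epsilon-A_1^\mathrm{hom}e_j\big)_k$ in $B_L(0)$ and $\| |G_j^\epsilon|_F\|_{L^2(B_L(0))}\to0$, cf.~\eqref{lemma bilinear form equation 4}. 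Splitting $\Phi_j^\epsilon=A_1^\mathrm{hom}e_j+(\Phi_j^\epsilon-A_1^\mathrm{hom}e_j)$: the $A_1^\mathrm{hom}e_j$ part equals $\int_{\R^d}\xi_\rho\,A_1^\mathrm{hom}\nabla u_0\cdot\nabla\varphi_0+o(1)$ by weak convergence of $\nabla\tilde u^\epsilon$ (see \eqref{two-scale convergence equation 1 ter}), and tends to $\int_{\R^d\setminus S_2}A_1^\mathrm{hom}\nabla u_0\cdot\nabla\varphi_0$ as $\rho\to0$ by dominated convergence; the remaining part is integrated by parts, moving $\partial_l$ onto the fixed compactly supported factor $\xi_\rho\partial_j\varphi_0$ and exploiting the skew-symmetry of $G_j^\epsilon$ exactly as in \eqref{3.50}, whence it is bounded by $C_\rho\| |G_j^\epsilon|_F\|_{L^2(B_L(0))}\|\nabla\tilde u^\epsilon\|_{L^2(\R^d)}\to0$. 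Collecting all contributions yields \eqref{eigenvalue Ahom part 1} for $\varphi_0\in C_0^\infty(\R^d)$, hence for all of $W^{1,2}(\R^d)$, and together with \eqref{eigenvalue Ahom part 2} this shows that $u^0=u_0+u_1$ is an eigenfunction of $\A^\mathrm{hom}$ corresponding to $\lambda_0$.

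I expect the passage to the limit in this matrix term to be the only genuine obstacle. The difficulty is twofold: $\nabla\tilde u^\epsilon$ converges merely weakly in $L^2$, so the oscillatory corrector cannot be disposed of by naive multiplication of weak limits and one is forced into a compensated-compactness argument (the potential $G_j^\epsilon$ playing the role of a vector potential for the corrected flux); and one must simultaneously control the boundary layer near $\partial S_2$ produced both by the inclusions discarded in Remark~\ref{remark removing inclusions near boundary of defect} and by the cut-off $\xi_\rho$. This is precisely why the two limits must be taken in the order $\epsilon\to0$ first, then $\rho\to0$, and why the $O(\rho^{1/2})$ estimates are essential; the $C^{1,\alpha}$ (rather than smooth) regularity of $\partial S_2$ plays no role here, since $\varphi_0$ is smooth.
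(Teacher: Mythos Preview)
Your argument is correct, but it takes a different route from the paper's proof. The paper does not build a corrected test function; instead it invokes the stochastic two-scale compactness of gradients (Theorem~\ref{theorem stochastic two scale convergence}(v)) to write $\nabla\tilde u^\epsilon\overset{2}{\rightharpoonup}\nabla u_0+\nabla_\omega u_2$, then identifies $\nabla_\omega u_2$ as the homogenisation corrector by testing \eqref{proof decay equation 1} with $\epsilon v_1(x)f(T_{x/\epsilon}\omega)$, $f\in C^\infty(\Omega)$, and finally passes to the limit in \eqref{proof decay equation 1} with a \emph{plain} test function $v_0\in C_0^\infty(\R^d)$, the flux converging because $\int_{\Omega\setminus\O}A_1(\nabla u_0+\nabla_\omega u_2)=A_1^{\mathrm{hom}}\nabla u_0$. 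Your approach is Tartar's oscillating-test-function method: you load the corrector $N_j^\epsilon$ into the test function and use the skew-symmetric potential $G_j^\epsilon$ (the very objects from Lemma~\ref{lemma bilinear form}) to deal with the product of the two weakly convergent sequences $\nabla\tilde u^\epsilon$ and $\chi_1^\epsilon A_1(e_j+\nabla N_j^\epsilon)$ by a div--curl argument. This is a legitimate alternative and has the advantage of being self-contained --- you never invoke the abstract two-scale limit of $\nabla\tilde u^\epsilon$ --- at the cost of carrying the cut-off $\xi_\rho$ and the extra limit $\rho\to0$. The paper's route is shorter once the two-scale machinery is in place, and avoids the boundary-layer bookkeeping altogether.

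One small point worth making explicit in your write-up: the integration-by-parts identity you use for the $G_j^\epsilon$ term, namely $\int\phi\,\partial_l(G_j^\epsilon)_{lk}\,\partial_k\tilde u^\epsilon=-\int(G_j^\epsilon)_{lk}\,(\partial_l\phi)\,\partial_k\tilde u^\epsilon$ with $\phi=\xi_\rho\partial_j\varphi_0$, does not require $\tilde u^\epsilon\in H^2$; it follows from the fact that $\partial_l(G_j^\epsilon)_{l\cdot}$ is solenoidal (by skew-symmetry of $G_j^\epsilon$) together with $\phi\in C_0^\infty$, so that one can first move $\partial_k$ and then $\partial_l$ without ever differentiating $\tilde u^\epsilon$ twice. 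Also note that for $\epsilon<\rho$ all inclusions removed near $\partial S_2$ lie inside $S_2^\rho$, so on $\operatorname{supp}\xi_\rho$ the coefficient $\chi_1^\epsilon$ coincides with the purely stochastic $\chi_{\R^d\setminus\epsilon\O_\omega}$, which is what makes the construction of $G_j^\epsilon$ from \cite[Corollary~D.5]{CCV2} applicable.
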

\begin{proof}
The strategy of the proof is quite standard and consists in choosing appropriate test functions in \eqref{proof decay equation 1} and passing to the limit.  Recall that the functions $u^\epsilon$, $\tilde{u}^\epsilon$ and $u^\epsilon_0$ satisfy the estimates
\eqref{theorem uniform exponential decay equation 2},
\eqref{theorem uniform exponential decay equation 3},
\eqref{proof decay equation 2}
and 
\eqref{proof decay equation 22}.

Lemmata~\ref{lemma 1 stochastic two-scale} and~\ref{lemma 2 stochastic two-scale}, together with Theorem~\ref{theorem stochastic two scale convergence}, imply that, up to extracting subsequences, we have
\begin{equation}
\label{two scale limits gradients}
\nabla \tilde u^\e \overset{2}{\rightharpoonup} \nabla u_0+\textcolor{black}{\overline{q}}, \qquad \epsilon \nabla u_0^\epsilon\overset{2}{\rightharpoonup} \textcolor{black}{\overline{\nabla_{y} u_1}}.
\end{equation}
for some $\textcolor{black}{\overline{q}}\in L^2(\R^d;\textcolor{black}{\mathcal V^2_{\rm pot}(\Omega)})$.

Consider a test function \color{black}
\begin{equation}
\label{test function general form}
v_1(x)f(x/\epsilon,\omega), \qquad v_1\in C_0^\infty(\R^d \setminus \mathcal{D}), \ \overline{f}\in C_0^\infty(\Omega).
\end{equation}
\color{black}
Substituting \eqref{test function general form} into \eqref{proof decay equation 1} we obtain
\begin{multline}
\label{proof decay equation 99999}
\epsilon^2 \int_{\textcolor{black}{\mathcal{I}}^\epsilon} \nabla \tilde u^{\epsilon} \cdot \nabla(v_1 f(\textcolor{black}{x/\epsilon,\omega}))
\\
+ \epsilon^2 \int_{\textcolor{black}{\mathcal{I}}^\epsilon} \nabla  u^{\epsilon}_0 \cdot (f(\textcolor{black}{x/\epsilon,\omega})\nabla v_1)
+
\epsilon \int_{\textcolor{black}{\mathcal{I}}^\epsilon} v_1 \,\nabla  u^{\epsilon}_0 \cdot \nabla_y f(\textcolor{black}{y,\omega})|_{y=x/\epsilon}
\\
= \lambda_{\epsilon} \int_{\R^d} u^{\epsilon} \,v_1f(\textcolor{black}{x/\epsilon,\omega}).
\end{multline}
It is easy to see that
\begin{equation*}
\label{proof decay equation 99999 temp1}
\epsilon^2 \int_{\textcolor{black}{\mathcal{I}}^\epsilon} \nabla \tilde u^{\epsilon} \cdot \nabla(v_1 f(\textcolor{black}{x/\epsilon,\omega}))+ \epsilon^2 \int_{\textcolor{black}{\mathcal{I}}^\epsilon} \nabla  u^{\epsilon}_0 \cdot (f(\textcolor{black}{x/\epsilon,\omega})\nabla v_1) \to 0 \quad \text{as}\quad \epsilon\to0,
\end{equation*}
whereas by \eqref{two scale limits gradients} we have
\begin{equation}
\label{proof decay equation 99999 temp2}
\epsilon \int_{\textcolor{black}{\mathcal{I}}^\epsilon} v_1 \,\nabla  u^{\epsilon}_0 \cdot \nabla_y f(\textcolor{black}{y,\omega})|_{y=x/\epsilon} \to 
\color{black}
\E\left[\int_{\R^d}v_1 \,\nabla_y u_1 \cdot \nabla_y f\right]
\color{black}
\quad \text{as}\quad \epsilon\to0.
\end{equation}
Formulae
\eqref{proof decay equation 99999}--\eqref{proof decay equation 99999 temp2}
give us \eqref{eigenvalue Ahom part 2}, with $\textcolor{black}{\overline{\varphi}}_1(x,\omega)=v_1(x)\textcolor{black}{\overline{f}}(\omega)$.

Consider a test function
\begin{equation*}
\label{test function general form 2}
\epsilon v_1(x)f(\textcolor{black}{x/\epsilon,\omega}), \qquad v_1\in C_0^\infty(\R^d \setminus \textcolor{black}{\mathcal{D}}), \ \textcolor{black}{\overline{f}}\in C^\infty(\Omega).
\end{equation*}
Substituting \eqref{test function general form} into \eqref{proof decay equation 1},  passing to the limit for $\epsilon\to0$ and using \eqref{two scale limits gradients}, we obtain \color{black}
\begin{equation}
\label{proof decay equation 99999 temp3}
\E\left[(A_1(\nabla u_0+\textcolor{black}{{q}})\cdot \nabla_y f)(1-1_\Omega)\right]=0 \qquad \forall \overline{f}\in C^\infty(\Omega).
\end{equation}
\color{black}
Note that the latter is the corrector equation,  which implies that $\textcolor{black}{\overline{q}}$ is the corrector associated with $\nabla u_0$, in particular, one has
\color{black}
\begin{equation*}
	\E\left[ A_1(\nabla u_0+\textcolor{black}{{q}})(1-1_\Omega)\right]= A_1^{\rm hom} \nabla u_0 .
\end{equation*}
\color{black}

Finally,  substituting the test function
$ v_0\in C_0^\infty(\R^d)$
into \eqref{proof decay equation 1} and passing to the limit as $\epsilon\to0$ with account of \eqref{two scale limits gradients} and \eqref{proof decay equation 99999 temp3}, one arrives at
\eqref{eigenvalue Ahom part 1}.

A standard density argument combined with the uniqueness of the solution of \eqref{eigenvalue Ahom part 1}--\eqref{eigenvalue Ahom part 2} completes the proof.
%\textcolor{black}{[Do we want more details here?]}
\end{proof}

By combining Lemma~\ref{lemma 1 stochastic two-scale},  Theorem~\ref{theorem stochastic two scale convergence},  Lemma~\ref{lemma 2 stochastic two-scale} and Lemma~\ref{lemma 3 stochastic two-scale} we obtain Theorem~\ref{main theorem 2}(b).

\section{Asymptotic spectral completeness}
\label{Asymptotic spectral completeness}

In conclusion, adapting an argument from \cite{cherdantsev}, we show that strong stochastic two-scale convergence of eigenfunctions implies that the multiplicity of eigenvalues is preserved in the limit $\epsilon\to 0$, thus proving Theorem~\ref{theorem one-to-one correspondence}. The latter establishes an asymptotic one-to-one correspondence between eigenvalues and eigenfunctions of $\A^\epsilon$ and $\A^\mathrm{hom}$ as $\epsilon\to 0$.

\begin{lemma}
\label{lemma for dimension of eigenspaces}
Let $E^\epsilon_{I}$ be the spectral projection onto the interval $I \subset \mathbb{R}$ associated with the operator $\mathcal{A}^\epsilon$. Suppose that $u\in L^2(\R^d)$, $\|u\|_{L^2(\R^d)}=1$, satisfies
\[
\|(\mathcal{A}^\epsilon-\lambda_0)u\|_{L^2(\R^d)}\le \rho
\]
for some $\rho\ge 0$. Then for every $\delta>0$
\[
\|E^\epsilon_{(\lambda_0-\delta,\lambda_0+\delta)}u\|_{L^2(\R^d)}\ge 1-2\frac{\rho}{\delta}.
\]
\end{lemma}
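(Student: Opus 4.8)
The statement is a direct consequence of the spectral theorem for the self-adjoint operator $\A^\epsilon$, and the plan is simply to make this precise. First I would observe that the finiteness of $\|(\A^\epsilon-\lambda_0)u\|_{L^2(\R^d)}$ forces $u$ to belong to the domain of $\A^\epsilon$, so that the functional calculus may be applied to $u$ without further ado. Write $I_\delta:=(\lambda_0-\delta,\lambda_0+\delta)$ and denote by $d\mu_u(\lambda):=d\langle E^\epsilon_\lambda u,u\rangle_{L^2(\R^d)}$ the scalar spectral measure associated with $u$; it is a positive finite measure with total mass $\mu_u(\R)=\|u\|_{L^2(\R^d)}^2=1$, and for any Borel set $B\subset\R$ one has $\mu_u(B)=\|E^\epsilon_B u\|_{L^2(\R^d)}^2$.

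Next I would split $u$ orthogonally as $u=u_{\mathrm{in}}+u_{\mathrm{out}}$ with $u_{\mathrm{in}}:=E^\epsilon_{I_\delta}u$ and $u_{\mathrm{out}}:=E^\epsilon_{\R\setminus I_\delta}u$, so that $\|u_{\mathrm{in}}\|_{L^2(\R^d)}^2+\|u_{\mathrm{out}}\|_{L^2(\R^d)}^2=1$. The key step is the elementary lower bound
\begin{equation*}
\rho^2\ \ge\ \|(\A^\epsilon-\lambda_0)u\|_{L^2(\R^d)}^2\ =\ \int_\R(\lambda-\lambda_0)^2\,d\mu_u(\lambda)\ \ge\ \int_{\R\setminus I_\delta}(\lambda-\lambda_0)^2\,d\mu_u(\lambda)\ \ge\ \delta^2\,\mu_u(\R\setminus I_\delta)\ =\ \delta^2\,\|u_{\mathrm{out}}\|_{L^2(\R^d)}^2,
\end{equation*}
where we used that $(\lambda-\lambda_0)^2\ge\delta^2$ throughout $\R\setminus I_\delta$. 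Hence $\|u_{\mathrm{out}}\|_{L^2(\R^d)}\le\rho/\delta$.

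Finally I would conclude: from $\|u_{\mathrm{in}}\|_{L^2(\R^d)}^2=1-\|u_{\mathrm{out}}\|_{L^2(\R^d)}^2\ge 1-(\rho/\delta)^2$ and the elementary inequality $\sqrt{1-t^2}\ge 1-t$ for $t\in[0,1]$ --- together with the observation that the asserted inequality is vacuous once $\rho/\delta\ge 1/2$ --- one obtains $\|E^\epsilon_{I_\delta}u\|_{L^2(\R^d)}\ge 1-\rho/\delta\ge 1-2\rho/\delta$, which is the claim. There is essentially no obstacle here; the only mild subtleties are the membership $u\in D(\A^\epsilon)$ noted above and the fact that the factor $2$ in the statement is a deliberate (and convenient) weakening of the sharp constant $1$.
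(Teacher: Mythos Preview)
Your proof is correct and is exactly what the paper has in mind: the paper's own proof consists of the single sentence ``The claim follows immediately from the Spectral Theorem,'' and you have simply written out that immediate consequence in full. Your observation that the sharp constant is $1$ rather than $2$ is also correct.
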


\begin{proof}
The claim follows immediately from the Spectral Theorem.
\end{proof}

\begin{proof}[Proof of Theorem~\ref{theorem one-to-one correspondence}]
(a) For each $\epsilon_n$ let $u^{\epsilon_n}_j$, $j=1,\ldots,m$, a family of orthonormal eigenfunctions associated with $\lambda_{\epsilon_n,j}$, $j=1,\ldots,m$. Theorem~\ref{main theorem 2} tells us that $\lambda_0\in \sigma_d(\mathcal{A}^\mathrm{hom})$. Furthermore, there exist $\textcolor{black}{\overline{u}}^0_j$, $j=1,\ldots, m$, eigenfunctions of $\mathcal{A}^\mathrm{hom}$ corresponding to $\lambda_0$ such that, up to extracting a subsequence,
\begin{equation*}
u_j^{\epsilon_n} \overset{2}{\rightarrow} \textcolor{black}{\overline{u}}^0_j \quad \text{as}\quad n\to +\infty.
\end{equation*}
The strong stochastic two-scale convergence of the eigenfunctions implies
\begin{equation*}
\delta_{jk}=(u^{\epsilon_n}_j,u^{\epsilon_n}_k)_{L^2(\R^d)}\to (\textcolor{black}{\overline{u}}^0_j, \textcolor{black}{\overline{u}}^0_k)_{H} \quad\text{as}\quad n\to+\infty,
\end{equation*}
namely, the eigenfunctions $\textcolor{black}{\overline{u}}^0_j$, $j=1,\ldots, m$, are orthonormal.  Here and further on in the proof $\delta_{jk}$ denotes the Kronecker delta.

(b) Let $\textcolor{black}{\overline{u}}^0_j$, $j=1,\ldots, m$, be an orthonormal basis for the $\lambda_0$-eigenspace of $\mathcal{A}^\mathrm{hom}$.
Then,  by Theorem~\ref{main theorem 1 reformulated}, for every $\rho>0$ one can construct $m$ normalised quasimodes $\widehat{u}^\epsilon_j$ for $\A^\epsilon$,
\begin{equation*}
\|(\mathcal{A}^\epsilon-\lambda_0)\widehat{u}^\epsilon_j\|_{L^2(\R^d)}\le \rho, \qquad j=1,\ldots,m,
\end{equation*}
for every $0<\epsilon\le \epsilon_0(\rho)$.
It is not difficult to see that the functions $\widehat{u}^\epsilon_j$ are linearly independent and satisfy
\begin{equation*}
(\widehat{u}^\epsilon_j,\widehat{u}^\epsilon_k)_{L^2(\R^d)} \to \delta_{jk} \quad \text{as}\quad \epsilon\to 0.
\end{equation*}
Lemma~\ref{lemma for dimension of eigenspaces} then implies that for every $0<\delta<\operatorname{dist}(\lambda_0,\mathcal{G})$ we have
\[
\operatorname{dim} \operatorname{ran} E^\epsilon_{(\lambda_0-\delta, \lambda_0+\delta)}\ge m,
\]
where $\operatorname{ran}$ stands for range.
This concludes the proof.
\end{proof}

\section*{Acknowledgements}
\addcontentsline{toc}{section}{Acknowledgements}

We would like to thank Ilia Kamotski and Valery Smyshlyaev for useful comments.
\textcolor{black}{Furthermore, we are grateful to the anonymous referees and the Associate Editor for insightful suggestions that greatly improved our paper}.

The research of Matteo Capoferri and Mikhail Cherdantsev was supported by the Leverhulme Trust Research Project Grant RPG-2019-240, which is gratefully acknowledged. The research of Igor Vel\v{c}i\'c was supported by the Croatian Science Foundation under Grant Agreement no.~IP-2018-01-8904 (Homdirestroptcm). 

\begin{appendices}

\section{Stability of the essential spectrum}
\label{appendix B}

\begin{theorem}
\label{theorem about stability of the essential spectrum}
Let $T_i, i =1,2,$ be the non-negative self-adjoint operators on $L^2(\textcolor{black}{\R}^d)$ uniquely defined by the densely defined bilinear forms 
\begin{equation*}
	\int_{\R^d} a_i \nabla u\cdot\nabla v,\quad u,v\in H^1(\R^d),
\end{equation*}
where $a_i$,  $i=1,2$, are measurable matrix-valued functions satisfying 
\begin{equation}\label{B.1}
	0<\nu |\xi|^2 \leq a_i \xi\cdot\xi\leq \nu^{-1} |\xi|^2 \quad \forall \xi \in \R^d 
\end{equation}
for some constant $\nu$. Assume that $a_1$ and $a_2$ differ only in a relatively compact set, i.e.~${\rm supp}(a_1 - a_2) \subset B_R$ for sufficiently large $R$. Then
\begin{equation*}
	\sigma_\ess (T_1) =	\sigma_\ess (T_2).
\end{equation*} 	
\end{theorem}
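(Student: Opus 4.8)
The plan is to prove the two inclusions $\sigma_\ess(T_1)\subset\sigma_\ess(T_2)$ and $\sigma_\ess(T_2)\subset\sigma_\ess(T_1)$ symmetrically, using Weyl's singular sequence criterion: $\lambda\in\sigma_\ess(T_i)$ if and only if there exists a sequence $u_n$ in the form domain $W^{1,2}(\R^d)$ with $\|u_n\|_{L^2}=1$, $u_n\rightharpoonup 0$ weakly in $L^2$, and $(T_i-\lambda)u_n\to 0$ in the appropriate sense — here, since we only have form-level information, it is cleanest to work with the resolvents and the equivalent criterion $(T_i+1)^{-1}$ has $(\,(1+\lambda)^{-1}\,)$ in its essential spectrum, or to use the form-bounded version of Weyl sequences: $\langle(T_i-\lambda)u_n,v\rangle\to 0$ uniformly over $\|v\|_{W^{1,2}}\le 1$, combined with $u_n\rightharpoonup0$. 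First I would fix $\lambda\in\sigma_\ess(T_1)$ and take such a Weyl sequence $\{u_n\}$ for $T_1$.

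The key step is a cut-off argument that exploits $\mathrm{supp}(a_1-a_2)\subset B_R$. Since $u_n\rightharpoonup 0$ in $L^2$ and (from the a priori bound $\nu\|\nabla u_n\|^2\le\langle T_1 u_n,u_n\rangle\le(\lambda+o(1))\|u_n\|^2$) the sequence is bounded in $W^{1,2}(\R^d)$, by Rellich--Kondrachov $u_n\to 0$ strongly in $L^2(B_{R+1})$, and hence along a subsequence $u_n\to 0$ strongly in $L^2_{\loc}$. Now pick a smooth cut-off $\chi$ with $\chi\equiv 0$ on $B_{R}$ and $\chi\equiv 1$ outside $B_{R+1}$, and set $w_n:=\chi u_n$. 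One checks: (i) $\|w_n-u_n\|_{L^2}=\|(1-\chi)u_n\|_{L^2}\le\|u_n\|_{L^2(B_{R+1})}\to 0$, so $\|w_n\|_{L^2}\to1$; (ii) $w_n\rightharpoonup0$; (iii) $w_n$ is still a Weyl sequence for $T_1$, because the commutator term $\nabla\chi\cdot(\cdots)$ is supported in $B_{R+1}\setminus B_R$ where $u_n\to 0$ strongly, so the Leibniz/IMS-type error $\langle(T_1-\lambda)w_n,v\rangle-\langle(T_1-\lambda)u_n,\chi v\rangle$ tends to zero uniformly in $\|v\|_{W^{1,2}}\le1$; and most importantly (iv) since $\mathrm{supp}\,w_n\subset\R^d\setminus B_R$ and $a_1=a_2$ there, the bilinear form $\int a_1\nabla w_n\cdot\nabla v=\int a_2\nabla w_n\cdot\nabla v$ for every $v$, whence $w_n$ is also a Weyl sequence for $T_2$ at $\lambda$. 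This gives $\lambda\in\sigma_\ess(T_2)$. Interchanging the roles of $T_1$ and $T_2$ yields the reverse inclusion and hence equality.

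The main obstacle, and the point that needs care, is step (iii)--(iv): making the cut-off argument rigorous at the level of forms rather than operators, since $w_n$ need not lie in the operator domain of $T_i$. The uniform ellipticity bound \eqref{B.1} is what keeps the forms comparable to $\|\nabla\cdot\|_{L^2}^2$ and lets one control all commutator and cross terms by $\|u_n\|_{L^2(B_{R+1}\setminus B_R)}\|\nabla v\|_{L^2}$, which vanishes; the relative compactness of $\mathrm{supp}(a_1-a_2)$ is used exactly once, to guarantee $a_1\equiv a_2$ on $\mathrm{supp}\,w_n$. One should also note at the outset that $\lambda\ge 0$ (both operators are non-negative) and record that the form-domain Weyl criterion I invoke is equivalent to the standard one; this is classical — see, e.g., \cite[Theorem~1]{FK} — and the argument above is essentially the standard decoupling-at-infinity proof, transcribed to our forms. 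Finally, I would remark that exactly the same cut-off scheme, applied to the two-scale forms defining $\A_\mathrm{hom}$ and $\widehat\A_\mathrm{hom}$, proves part (ii) of the theorem in the main text, upon replacing the microscopic term as indicated there.
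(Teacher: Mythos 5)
Your overall strategy is the same as the paper's: cut off the Weyl sequence away from the defect so that the two forms coincide on its support, and pass from a form-level quasimode to a genuine operator-domain quasimode for $T_2$. In particular, your ``form-domain Weyl criterion'' unpacks to exactly the paper's device of setting $\hat\psi_k := (\lambda+1)(T_2+1)^{-1}\psi_k$ and using the quadratic form to show $\|\psi_k-\hat\psi_k\|_{L^2}\to 0$, and your fixed cut-off $\chi$ plays the role of the paper's $1-\eta_k$.

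There is, however, a genuine gap in step (iii)--(iv). Writing $t_1$ for the form of $T_1$, the commutator error you need to control is
\[
t_1(\chi u_n, v) - t_1(u_n, \chi v) \;=\; \int u_n\, a_1\nabla\chi\cdot\nabla v \;-\; \int v\, a_1\nabla u_n\cdot\nabla\chi.
\]
The first integral is indeed bounded by $\|u_n\|_{L^2(B_{R+1}\setminus B_R)}\|\nabla v\|_{L^2}$ and vanishes by Rellich--Kondrachov, as you say. But the second is controlled by $\|\nabla u_n\|_{L^2(B_{R+1}\setminus B_R)}\|v\|_{L^2}$, \emph{not} by $\|u_n\|_{L^2(\text{annulus})}\|\nabla v\|_{L^2}$, and Rellich--Kondrachov only gives strong local $L^2$ convergence of $u_n$, not of $\nabla u_n$; weak $W^{1,2}$ compactness alone leaves $\|\nabla u_n\|_{L^2(\text{annulus})}$ merely bounded. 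One also cannot move the derivative off $\nabla u_n$ by parts because $a_1$ is only measurable. This is precisely the point the paper addresses and you skip: before introducing the cut-off, the paper multiplies the eigenvalue relation by $\eta_{2R}\varphi_n$ and integrates by parts (a Caccioppoli estimate) to prove $\|\nabla\varphi_n\|_{L^2(B_R)}\to 0$ for every fixed $R$, and only then forms the cut-off quasimode. Your proof needs this step (or an equivalent one) added.

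An equivalent repair, slightly different from the paper's, is worth recording: since $\nabla u_n\rightharpoonup 0$ weakly in $L^2(\R^d)$ and $a_1\nabla\chi$ is a fixed bounded vector field supported in the fixed compact annulus $\overline{B_{R+1}\setminus B_R}$, the sequence $g_n := a_1\nabla\chi\cdot\nabla u_n$ converges weakly to $0$ in $L^2$ with supports in a fixed bounded set; by compactness of the embedding $L^2(B_{R+2})\hookrightarrow H^{-1}(B_{R+2})$ it therefore converges \emph{strongly} in $H^{-1}$, which is exactly the dual norm needed to conclude $\sup_{\|v\|_{W^{1,2}}\le 1}\bigl|\int v\,g_n\bigr|\to 0$. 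Either the paper's Caccioppoli estimate or this $H^{-1}$-compactness observation closes the gap; as written, your justification for that commutator term is incorrect and the argument is incomplete.
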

\begin{proof}	
Let $\l\in\sigma_\ess(T_1)$ and  $\varphi_n$ be a corresponding Weyl sequence for $T_1$. Namely, $\|\varphi_n\|_{L^2(\R^d)}=1$, $\varphi_n \rightharpoonup 0$ weakly in $L^2(\R^d)$, and
\begin{equation}
\label{proof appendix B equation 1}
	T_1 \varphi_n = \l \varphi_n + h_n,
\end{equation}
where $\|h_n\|_{L^2(\R^d)}\to 0$ as $n\to \infty$.  From \eqref{B.1} it follows that $\varphi_n$ is bounded in $H^1(\R^d)$ and, hence, converges to zero  weakly in $H^1(B_R)$ and strongly in $L^2(B_R)$ for any $R>0$. Multiplying \eqref{proof appendix B equation 1} by $\eta_{2R} \varphi_n$ and integrating by parts, we obtain
\begin{equation*}
	\int_{\R^d}\eta_{2R} \, a_1 \nabla \varphi_n \cdot \nabla \varphi_n +  	\int_{\R^d} \varphi_n a_1 \nabla \eta_{2R} \cdot \nabla \varphi_n= \l \int_{\R^d}  \eta_{2R} |\varphi_n|^2 +  \int_{\R^d} h_n \eta_{2R} \varphi_n.
\end{equation*}
From the latter and \eqref{B.1} one easily concludes that 
\begin{equation*}
\| \nabla \varphi_n\|^2_{L^2(B_R)}\leq  \frac{C}{R}\|  \varphi_n\|_{L^2(B_{2R})} \| \nabla \varphi_n\|_{L^2(B_{2R})} + \lambda \|  \varphi_n\|^2_{L^2(B_{2R})}+ \| h_n\|_{L^2(B_{2R})}\|  \varphi_n\|_{L^2(B_{2R})},
\end{equation*}
and, therefore, $\| \nabla \varphi_n\|_{L^2(B_R)}\to 0$  for any $R>0$. It follows that there exists a subsequence $n_k$ such that 
\begin{equation}\label{B.2}
	\| \varphi_{n_k}\|_{H^1(B_k)} \to 0 \mbox{ as } k\to \infty.
\end{equation}

Next we follow the general argument utilised in Section \ref{Proof of main theorem 1} (and initially devised in \cite{KS}).  Define $\psi_k:= (1-\eta_k) \varphi_{n_k}$ and $\hat\psi_k:= (\l+1)(T_2+I)^{-1}\psi_k$. We claim that $\hat\psi_k$ is a Weyl sequence for the operator $T_2$ corresponding to $\l$. Clearly, 
\begin{equation*}
	(T_2-\l)\hat\psi_k = (\l+1)(\psi_k - \hat \psi_k).
\end{equation*}
Thus, in order to prove the claim, it is sufficient to show that $\|\psi_k - \hat \psi_k\|_{L^2(\R^d)}$ vanishes in the limit. (Indeed, this would also imply via \eqref{B.2} that $\|\hat \psi_k\|_{L^2(\R^d)}\to 1$ and $\hat \psi_k\rightharpoonup 0$ weakly in $L^2(\R^d)$.) After observing that
\begin{equation}\label{B.3}
\|\psi_k - \hat \psi_k\|_{L^2(\R^d)}^2\leq \tau_2(\psi_k - \hat \psi_k,\psi_k - \hat \psi_k),
\end{equation}
where $\tau_i, i = 1,2,$ denotes the bilinear form of the operator $T_i + I$, we focus on the right hand side of the latter.

For an arbitrary $v\in H^1(\R^d)$ from the definition of $\hat \psi_k$ we have
\begin{equation*}
\tau_2(\psi_k - \hat \psi_k,v) = \tau_2(\psi_k ,v) - (\l+1)  (\psi_k,v).
\end{equation*}
Since for sufficiently large $k$ the defect ${\rm supp}(a_1 - a_2)$ is contained in $B_{k/2}$, one has
\begin{equation*}
	\begin{aligned}
		\tau_2(\psi_k ,v) &- (\l+1)  (\psi_k,v) = \tau_1(\varphi_{n_k} ,v) - (\l+1)  (\varphi_{n_k},v)  
		\\
		&- \int_{\R^d} \eta_k  a_1 \nabla \varphi_{n_k} \cdot\nabla v- \int_{\R^d} \varphi_{n_k} a_1 \nabla\eta_k   \cdot\nabla v  + \l \int_{\R^d} \eta_k \varphi_{n_k} v.
	\end{aligned}
\end{equation*}
We estimate the last three terms as follows:
\begin{equation*}
	\begin{aligned}
	\left| \int_{\R^d} \eta_k  a_1 \nabla \varphi_{n_k} \cdot\nabla v\right|&\leq C \|\nabla \varphi_{n_k}\|_{L^2(B_k)}\|\nabla v\|_{L^2(B_k)}\,,
\\
	\left| \int_{\R^d} a_1 \varphi_{n_k}\nabla \eta_k  \cdot\nabla v\right|&\leq \frac{C}{k} \|\varphi_{n_k}\|_{L^2(B_k)}\|\nabla v\|_{L^2(B_k)}\,,
\\
	\left|\l \int_{\R^d}\eta_k \varphi_{n_k} v\right|&\leq \|\varphi_{n_k}\|_{L^2(B_k)}\| v\|_{L^2(B_k)}\,.
		\end{aligned}
\end{equation*}
We conclude that 
\begin{equation}
\label{proof appendix B equation 2}
|	\tau_2(\psi_k - \hat \psi_k,v)|\leq C \| \varphi_{n_k} \|_{H^1(B_k)}	\| v\|_{H^1(B_k)}\leq C \| \varphi_{n_k} \|_{H^1(B_k)} \sqrt{	\tau_2(v,v)}\,,
\end{equation}
where the last inequality follows from \eqref{B.1}. Finally,  replacing $v$ with $\psi_k - \hat \psi_k$ in \eqref{proof appendix B equation 2} and taking into account \eqref{B.2}, we conclude from 
\eqref{B.3} that 
\begin{equation*}%\label{B.3}
	\|\psi_k - \hat \psi_k\|_{L^2(\R^d)} \to 0.
\end{equation*}
The theorem is proved.
\end{proof}

\section{Properties of two-scale convergence}
\label{Properties of two-scale convergence}
\label{appendix A}

In this appendix we summarise, in the form of a theorem, some properties of stochastic two-scale convergence used in our paper. We refer the reader to \cite{ZP} for further details, see also \cite{CCV1}.

\begin{theorem}
 \label{theorem stochastic two scale convergence}  

 Stochastic two-scale convergence enjoys the following properties.
\begin{enumerate}[(i)]

\item
Let $\{u^\epsilon\}$ be a bounded sequence in $L^2(\R^d)$. Then there exists $\textcolor{black}{\overline{u}}\in L^2(\R^d\times \Omega)$ such that, up to extracting a subsequence,
$
u^\epsilon \overset{2}{\rightharpoonup} \textcolor{black}{\overline{u}}.
$

\item
If $u^\epsilon \overset{2}{\rightharpoonup} \textcolor{black}{\overline{u}}$, then $\|\textcolor{black}{\overline{u}}\|_{L^2(\R^d\times \Omega)}\le \liminf_{\epsilon\to 0}\|u^\epsilon\|_{L^2(\R^d)}$.

\item
If $u^\epsilon\to \textcolor{black}{\overline{u}}$ in $L^2(\R^d)$, then $u^\epsilon \overset{2}{\rightharpoonup} \textcolor{black}{\overline{u}}$.

\item 
Let $\{v^\epsilon\}$ be a uniformly bounded sequence in $L^\infty(\R^d)$ such that $v^\epsilon\to v$ in $L^1(\R^d)$, $\|v\|_{L^{\infty}(\R^d)}<+\infty$. Suppose that $\{u^\epsilon\}$ is a bounded sequence in $L^{2}(\R^d)$ such that
$u^\epsilon \overset{2}{\rightharpoonup} \textcolor{black}{\overline{u}}$ for some $\textcolor{black}{\overline{u}}\in L^2(\R^d \times \Omega)$. Then $v^\epsilon
u^\epsilon \overset{2}{\rightharpoonup} v\textcolor{black}{\overline{u}}$.

\item
Let $\{u^\epsilon\}$ be a bounded sequence in $H^1(\R^d)$. Then, there exist $u_0\in H^1(\R^d)$ and $\textcolor{black}{\overline{p}}\in L^2(\R^d; \textcolor{black}{\mathcal V^2_{\rm pot}(\Omega)})$ such that, up to extracting a subsequence,
\[
u^\epsilon\rightharpoonup u_0 \quad \text{in}\quad H^1(\R^d),
\]
\[
\nabla u^\epsilon  \overset{2}{\rightharpoonup} \nabla u_0 + \textcolor{black}{\overline{p}}.
\]

\item
Let $\{u^\epsilon\}$ be a bounded sequence in $L^2(\R^d)$ such that $\{\epsilon \nabla u^\epsilon\}$ is also bounded in $L^2(\R^d)$. Then there exists $\textcolor{black}{\overline{u}}\in L^2(\R^d;H^1(\Omega))$ such that, up to extracting a subsequence,
\[
u^\epsilon \overset{2}{\rightharpoonup} \textcolor{black}{\overline{u}},
\]
\[
\epsilon\nabla u^\epsilon \overset{2}{\rightharpoonup} \textcolor{black}{\overline{\nabla_{y} u}}.
\]

\end{enumerate} 
\end{theorem}

\end{appendices}


\begin{thebibliography}{32}

\bibitem{AK}
M.~A.~Ackoglu and U.~Krengel, 
Ergodic theorems for superadditive processes,
{\it J.~Reine Angew. Math.} \textbf{323} (1981) 53--67.

\bibitem{agmon1}
S.~Agmon, 
On exponential decay of solutions of second order elliptic equations in unbounded domains, 
{\it Proc. A. Pleijel Conf.} (1979) 1--18.

\bibitem{agmon2}
S.~Agmon,  
{\it Lectures on exponential decay of solutions of second-order elliptic equations: bounds on eigenfunctions of N-body Schrödinger operators},
Mathematical Notes {\bf 29},  Princeton University Press (1982).

\bibitem{AADH}
S.~Alama, M.~\textcolor{black}{Avellaneda}, P.~Deift and R.~Hempel, 
On the existence of eigenvalues of a divergence-form operator $A+\lambda B$ in a gap of $\sigma(A)$,
\emph{Asymptotic Analysis} \textbf{8} no.~4 (1994) 311--344

\bibitem{bensoussan}
A.~Bensoussan and J.~Frehse,
{\it Regularity Results for Nonlinear Elliptic Systems and Applications},
Applied Mathematical Sciences {\bf 151}, Springer-Verlag Berlin Heidelberg (2002).

\bibitem{birman}
M.~Sh.~Birman, 
On the spectrum of singular boundary-value problems, (Russian) {\it Math. Sb.} {\bf 55} no.~97 (1961) 125--174. English translation: In {\it Eleven Papers in Analysis}, AMS Transl. {\bf 2} no.~53, AMS, Providence, RI, (1966) 23--60.

\bibitem{cherdantsev}
M.~Cherdantsev,
Spectral convergence for high constrast elliptic periodic problems with a defect via homogenisation,
{\it Mathematika}  \textbf{55}  no.~1-2 (2009) 29--57.

\bibitem{CCC}
M.~Cherdantsev, K.~Cherednichenko and S.~Cooper, 
Extreme localisation of eigenfunctions to one-dimensional high-contrast periodic problems with a defect, 
{\it SIAM J. Math. Anal.} {\bf 50} no.~6 (2018) 5825--5856. 

\bibitem{CCV1}
M.~Cherdantsev, K.~Cherednichenko and I.~Vel\v{c}i\'c,
Stochastic homogenisation of high-contrast media,
{\it Applicable Analysis} \textbf{98} no.~1-2 (2019) 91--117.

\bibitem{CCV2}
M.~Cherdantsev, K.~Cherednichenko and I.~Vel\v{c}i\'c,
High-contrast random composites: homogenisation framework and new spectral phenomena.
Preprint arXiv:2110.00395\textcolor{black}{v4} (2021).

\bibitem{CCG}
K.~Cherednichenko, S.~Cooper and S.~Guenneau, 
Spectral analysis of one-dimensional high-contrast elliptic problems with periodic coefficients, 
{\it Multiscale Model. Simul.} {\bf 13} no.~1 (2015) 72--98.

\color{black}
\bibitem{CCS}
S. Cooper, I. V. Kamotski, V. P. Smyshlyaev, 
On band gaps in photonic crystal fibers.
Prepring arXiv:1411.0238 (2014).
\color{black}

\bibitem{costabel}
M.~Costabel,  M.~Dauge and S.~Nicaise,
Corner Singularities and Analytic Regularity for Linear Elliptic Systems. Part I: Smooth domains.
Preprint hal-00453934v2 (2010).

\bibitem{DG}
M.~Duerinckx and A.~Gloria,
 Stochastic Homogenization of Nonconvex Unbounded Integral Functionals with Convex Growth,
{\it Archive for Rational Mechanics and Analysis} \textbf{221} (2016) 1511--1584.


\bibitem{FK}
A.~Figotin and A.~Klein,
Localized classical waves created by defects,
{\it Journal of Statistical Physics} \textbf{86} no.~1--2 (1997) 165--177. 

\color{black}
\bibitem{FKu}
A.~Figotin and P.~Kuchment,
Band-Gap Structure of Spectra of Periodic Dielectric and Acoustic Media. I. Scalar Model
{\it SIAM J. Appl. Math.} \textbf{56} no.~6 (1996) 68--88.
\color{black}

\bibitem{GT}
D.~Gilbarg and N.~Trudinger,
{\it Elliptic Partial Differential Equations of Second Order},
Classics in Mathematics  \textbf{224}, Springer (2001).

\bibitem{ZKO}
V.~V.~\textcolor{black}{J}ikov, S.~M.~Kozlov and O.~A.~Oleinik, 
{\it Homogenization of Differential Operators and Integral Functionals},
 Springer-Verlag, Berlin (1994).

\bibitem{KS}
I. \textcolor{black}{V.}~Kamotski and V. \textcolor{black}{P.}~Smyshlyaev,
Localised modes due to defect in high contrast periodic media via two-scale homogenization,
{\it Journal of Mathematical Sciences} \textbf{232} no.~3 (2018) 349--377.

\bibitem{knight}
J.~C.~Knight, 
Photonic crystal fibres, 
{\it Nature} {\bf 424} (2003) 847--851 .

\bibitem{kuchment}
P.~Kuchment, 
The mathematics of photonic crystals, In: {\it Mathematical Modeling in Optical Science}, SIAM, Front. Appl. Math., Philadelphia, PA, (2001) 207--272. 

\color{black}
\bibitem{kuchment2016}
P.~Kuchment,  
An overview of periodic elliptic operators. 
{\it Bull. AMS}, \textbf{53} (2016), 343--414.
\color{black}

\bibitem{vogelius}
Y.~Li and M.~Vogelius, 
Gradient Estimates for Solutions to Divergence Form Elliptic Equations with Discontinuous Coefficients,
{\it Arch. Rational Mech. Anal. } {\bf 153} (2000) 91--151.

\color{black}
\bibitem{phononic}
F.~Motaei and A.~Bahrami,
An elastic fiber based on phononic crystals,
{\it Sci. Rep.} {\bf 11} (2021) 19198.
\color{black}

\bibitem{RS}
M.~Reed and B.~Simon,
{\it Methods of modern mathematical physics 1: Functional analysis},
Academic Press (1980).

\bibitem{russell}
P.~Russell,
Photonic crystal fibers,
{\it Science} {\bf 299} (2003) 385--362.

\bibitem{SW}
M.~Sango and J.-L.~Woukeng, 
Stochastic two-scale convergence of an integral functional, 
{\it Asymptotic Analysis} \textbf{73} no.~1-2 (2011) 97--123.

\bibitem{stein}
E.~M. ~Stein,
{\it Singular integrals and differentiability properties of functions}, 
Princeton Mathematical Series {\bf 30},  Princeton University Press (1970).

\color{black}
\bibitem{vishik}
M.~I.~Vishik and L.~A.~Lyusternik, 
Regular degeneration and boundary layer for linear differential equations with small parameter, 
{\it Uspekhi Mat. Nauk} {\bf 12} no.~5 (1957) 3--122.
\color{black}

\bibitem{zhikov2000}
V.~V.~Zhikov, 
On an extension of the method of two-scale convergence and its applications, (Russian) 
{\it Mat. Sb.} \textbf{191} no.~7 (2000) 31--72; (English translation) {\it Sb. Math.} {\bf 191} no.~7-8 (2000) 973--1014.

\bibitem{zhikov2004}
V.~V.~Zhikov, 
Gaps in the spectrum of some elliptic operators in divergent form with periodic coefficients, (Russian) {\it Algebra i Analiz} \textbf{16} no.~5 (2004) 34--58; (English translation) {\it St. Petersburg Math. J.} {\bf 16} no.~5 (2004)  773--790.

\bibitem{ZP}
V.~V.~Zhikov and A.~L.~Pyatnitskii,
Homogenization of random singular structures and random measures,
{\it Izv. Math. } {\bf 70} no.~1 (2006) 19--67.

\end{thebibliography}
\end{document}